\documentclass[leqno,english]{amsart}
\usepackage[T1]{fontenc}
\usepackage[utf8]{inputenc}
\usepackage{xcolor}
\usepackage{babel}
\usepackage{enumitem}
\usepackage{amstext}
\usepackage{amsthm}
\usepackage{amssymb}
\usepackage{setspace}
\usepackage[pdfusetitle,
 bookmarks=true,bookmarksnumbered=true,bookmarksopen=false,
 breaklinks=false,pdfborder={0 0 0},pdfborderstyle={},backref=false,colorlinks=true]
 {hyperref}
\hypersetup{
 citecolor=blue,linkcolor=red}

\makeatletter
\numberwithin{equation}{section}
\numberwithin{figure}{section}
\numberwithin{table}{section}

\usepackage{tikz,fancyhdr,enumitem}
\usetikzlibrary{scopes,calc,intersections,through,backgrounds,arrows,decorations.pathmorphing,positioning,fit,petri,decorations.markings,matrix}
\usepackage{hyperref,bookmark,dsfont,bbm}
\usepackage[all]{xy}
\usepackage{microtype}
\hypersetup{linktocpage}

\subjclass[2020]{Primary 05B35; Secondary 06C10, 52C35}

\makeatother

\providecommand\theoremname{Theorem}
\theoremstyle{plain}
\newtheorem{thm}{\protect\theoremname}[section]
\providecommand\examplename{Example}
\theoremstyle{definition}
\newtheorem{example}[thm]{\protect\examplename}
\providecommand\remarkname{Remark}
\theoremstyle{remark}
\newtheorem{rem}[thm]{\protect\remarkname}
\providecommand\propositionname{Proposition}
\theoremstyle{plain}
\newtheorem{prop}[thm]{\protect\propositionname}
\providecommand\definitionname{Definition}
\theoremstyle{definition}
\newtheorem{defn}[thm]{\protect\definitionname}
\providecommand\lemmaname{Lemma}
\theoremstyle{plain}
\newtheorem{lem}[thm]{\protect\lemmaname}
\providecommand\corollaryname{Corollary}
\newtheorem{cor}[thm]{\protect\corollaryname}
\providecommand\problemname{Problem}
\theoremstyle{definition}
\newtheorem{problem}[thm]{\protect\problemname}
\begin{document}
\title{Syzygy and Adjoint Matroids}
\author{Jaeho Shin}
\address{Department of Mathematical Sciences, Seoul National University, Seoul
08826, South Korea}
\email{j.shin@snu.ac.kr}
\keywords{Matroid, derived matroid, adjoint, Gaussian elimination, syzygy matroid,
combinatorial adjoint, Vámos matroid.}
\begin{abstract}
We give a matroid-theoretic analogue of Gaussian elimination and use
it to construct, for every matroid $M$, the syzygy matroid $\mathrm{Syz}(M)$
on the collection of circuits of $M$. Dualizing the construction
yields the combinatorial adjoint $\mathrm{Adj}(M)$ on the collection
of maximal proper flats of $M$, and the two are related by $\mathrm{Adj}(M)\simeq\mathrm{Syz}(M^{\ast})$.
Neither construction depends on a realization. We determine $\mathrm{Syz}(M)$
when $M$ is $U_{k,k+3}$ or the Vámos matroid $V_{8}$. We also show
that $\mathrm{Adj}(M)$ is an adjoint of $M$ when $M$ has rank $3$.
When $M$ has rank $4$ and $V_{8}$ is a restriction of $M$, $\mathrm{Adj}(M)$
has rank $3$ and is not an adjoint of $M$.
\end{abstract}

\maketitle

\section{\label{sec:Introduction}Introduction}

A matroid abstracts linear dependence, and one may ask, as Rota and
Crapo did \cite{Kung}, whether it gives rise to a second matroid
built from its own structure. For realizable matroids there are two
candidates, one from the circuits and the other from the maximal proper
flats. The former goes back to Longyear \cite{Longyear} for binary
matroids and was defined for realizable matroids by Oxley and Wang
\cite{Oxley 2}; the latter has appeared as the type I adjoint of
Bixby and Coullard \cite{BC88}, as the matroid of the derived code
of Jurrius and Pellikaan \cite{JP15}, and as the cocircuit matroid
described by Kung, following Crapo \cite{Kung}. The two constructions
are related by matroid duality, and neither is determined by the matroid
alone.

The classical approach to building a matroid on the maximal proper
flats without a realization is the adjoint of Cheung \cite{Cheung},
a geometric lattice of the same rank into which the inverted flat
lattice of a matroid $M$ embeds with atoms mapped onto atoms. Yet
an adjoint need not exist: the Vámos matroid $V_{8}$ has none \cite{Cheung}.
Another approach employs the Dilworth completion, which embeds an
arbitrary finite lattice into a geometric lattice. Applied to the
inverted flat lattice of $M$, it produces a geometric lattice, called
the \emph{complete adjoint} of $M$ \cite[Exercise 7.59]{Dilworth}.
However, the embedding does not control the ranks of the images of
the flats of $M$, so the complete adjoint is not an adjoint in general,
as $V_{8}$ already shows.

In this paper, for every matroid $M$, we construct two matroids determined
by $M$ alone: the \emph{syzygy matroid} $\mathrm{Syz}(M)$ on the
collection of circuits of $M$ and the \emph{combinatorial adjoint}
$\mathrm{Adj}(M)$ on the collection of maximal proper flats of $M$.
As in the realizable case, the two are related by the duality $\mathrm{Adj}(M)\simeq\mathrm{Syz}(M^{*})$.

The construction of $\mathrm{Syz}(M)$ proceeds in two stages. The
first stage takes an arbitrary finite set as the ground set and extends
any \emph{seed}, that is, any collection of nonempty subsets of that
set, to the dependent-set collection of a matroid by adding the sets
required by the elimination axiom for dependent sets, taking upward
closure, and repeating these two steps until the collection stabilizes.
This stage requires no matroid at the outset, in contrast to single-element
extensions, which are governed by the modular cuts of a given matroid
\cite{Crapo}. 

The second stage uses an analogue of Gaussian elimination to measure
how many circuits are needed to span a given collection of circuits.
The matroid $\mathrm{Syz}(M)$ is obtained by applying the first stage
to the seed consisting of those collections of circuits of $M$ that,
in this sense, are spanned by fewer circuits than they contain.\footnote{The relations among a set of generators of a module form a first syzygy
module of that module. If a matroid $M$ is realized over a field
$\mathbb{F}$ by $L:\mathbb{F}^{n}\rightarrow\mathbb{F}^{k}$, the
relations among the columns of $L$ form $\ker L$, whose nonzero
elements of minimal support are the circuit vectors of Section \ref{sec:Realizations};
hence the name \emph{syzygy matroid}. Dependencies among circuits
thus play the role of syzygies among syzygies, the analogy behind
Rota's program \cite{Kung}.} The first stage also yields the combinatorial derived matroid of
Freij-Hollanti, Jurrius, and Kuznetsova \cite{CDM} when applied to
a seed included in this one. However, the first stage is not monotone
in its input, so this inclusion implies no comparison between the
two matroids. 

Dualizing the construction, that is, applying it to $M^{\ast}$ and
taking complements in $E(M)$, yields the combinatorial adjoint $\mathrm{Adj}(M)$.
Under this complementation, dependent sets of $M^{\ast}$ become nonspanning
sets of $M$. The matroid $\mathrm{Adj}(M)$ can be computed either
from $\mathrm{Syz}(M^{\ast})$ or directly within the flat lattice
of $M$. Like $\mathrm{Adj}(M)$, the complete adjoint exists for
any matroid, but the two do not coincide in general; see Remark~\ref{rem:Dilworth-Adjoint}. 

Both $\mathrm{Syz}(M)$ and $\mathrm{Adj}(M)$ are simple, and both
are connected when $M$ is. We determine $\mathrm{Syz}(M)$ for $M=U_{k,k+3}$
and $M=V_{8}$. If $M$ has rank $3$, then $\mathrm{Adj}(M)$ is
an adjoint of $M$. In rank $4$ this can fail: whenever $M$ has
$V_{8}$ as a restriction, $\mathrm{Adj}(M)$ has rank $3$ and is
therefore not an adjoint of $M$. 

The paper is organized as follows. Section \ref{sec:Realizations}
describes the two constructions in the realizable case and their duality,
and Section \ref{sec:Generating-Matroids} shows how any collection
of nonempty subsets of an arbitrary finite set generates a matroid.
The analogue of Gaussian elimination is developed in Section \ref{sec:Syzygy},
where $\mathrm{Syz}(M)$ is constructed, with loops split off as free
summands. Section \ref{sec:Combinatorial-Adjoints} transports the
construction to the maximal proper flats and studies $\mathrm{Adj}(M)$,
with coloops split off dually.

\subsection{\label{subsec:Notation-terminology}Notation and terminology }

We write $E(M)$ for the ground set of a matroid $M$. We let $n$
and $k$ denote $\left|E(M)\right|$ and $r(M)$, respectively, and
identify $E(M)$ with $\left[n\right]=\left\{ 1,\dots,n\right\} $
when the elements need to be indexed, in which case we may write a
subset by concatenating its elements, as in $134=\left\{ 1,3,4\right\} $.
We denote by $\mathcal{C}(M)$, $\mathcal{B}(M)$, and $\mathcal{L}(M)$
the collections of circuits, bases, and flats of $M$, respectively.
A subscript $M$ on a function or operator indicates that it is taken
with respect to $M$, as in the rank function $r_{M}$ and the closure
operator $\mathrm{cl}_{M}$. We omit the subscript when the matroid
is clear from the context.

Throughout, \emph{contains} refers to membership and \emph{includes}
to inclusion: a set contains its elements and includes its subsets.
\emph{Hyperplane} is reserved for its classical meaning, a codimension-one
subspace of a vector or projective space. A flat that is maximal among
the proper flats of $M$ is called a \emph{maximal proper flat}, or
\emph{mp-flat} for short; such a flat is commonly called a hyperplane
in the matroid literature. We write $\mathcal{H}(M)$ for the collection
of mp-flats of $M$. For a subset $X\subseteq E(M)$, the \emph{corank}
of $X$ in $M$ is $\mathrm{corank}_{M}(X):=r(M)-r_{M}(X)$, so the
mp-flats of $M$ are precisely its flats of corank $1$. This quantity
should not be confused with the rank $r_{M^{\ast}}(X)$ of $X$ in
$M^{\ast}$, which is sometimes called the corank in the literature. 

\section{\label{sec:Realizations}Constructions from Realizations}

A realization of a matroid yields one matroid from the circuit vectors
and another from the normal vectors of the hyperplanes corresponding
to the mp-flats. Both depend on the realization, whereas the constructions
of Sections \ref{sec:Syzygy} and \ref{sec:Combinatorial-Adjoints}
do not.\smallskip{}

Let $L$ be a $k\times n$ matrix of rank $k$ over a field $\mathbb{F}$,
let $V$ be its row space, and let $V^{\perp}$ be the orthogonal
complement of $V$ in $\mathbb{F}^{n}$. For $e\in\left[n\right]$
and $X\subseteq\left[n\right]$, denote by $L_{e}$ the column of
$L$ indexed by $e$, and by $L_{X}$ the submatrix of $L$ consisting
of the columns indexed by $X$. We read $L$ in two complementary
ways. 
\begin{enumerate}[label=(LS)]
\item \textbf{\label{enu:Span-viewpoint}}\textbf{Linear span viewpoint.}
Regard the columns of $L$ as vectors in $\mathbb{F}^{k}$. The sets
$X\subseteq[n]$ such that the columns of $L_{X}$ are linearly independent
form the independent sets of a matroid $M$ on $[n]$ of rank $k$.
When we say that a collection of vectors forms a matroid, we mean
it in this sense. The zero columns of $L$ are the loops of $M$,
and each nonzero column defines a point in $\mathbb{P}^{k-1}=\mathbb{P}(\mathbb{F}^{k})$;
thus $M$ records the linear dependencies among these points.
\end{enumerate}
\begin{enumerate}[label=(HA)]
\item \textbf{\label{enu:HA-viewpoint}}\textbf{Hyperplane arrangement
viewpoint.} The projective space $\mathbb{P}(V)$ can be identified
with $\mathbb{P}^{k-1}$ through the isomorphism induced by $\mathbb{F}^{k}\to V$,
$\mathbf{x}\mapsto\mathbf{x}^{T}L$. Regard the columns of $L$ as
linear functionals on $V$. These functionals satisfy the same linear
relations as the columns, and hence also form the matroid $M$. Each
nonzero functional defines a hyperplane in $\mathbb{P}(V)$, and $M$
records the incidence relations among these hyperplanes. 
\end{enumerate}

\subsection{\label{subsec:OW}Derived matroids}

The \emph{support} of a vector $\mathbf{v}\in\mathbb{F}^{n}$ is the
set of coordinates at which $\mathbf{v}$ is nonzero, namely $\mathrm{supp}(\mathbf{v})=\left\{ e\in\left[n\right]:\mathbf{v}(e)\neq0\right\} $.
A vector $\mathbf{v}\in\mathbb{F}^{n}$ lies in $V^{\perp}$ exactly
when $\sum_{e\in[n]}\mathbf{v}(e)L_{e}=\mathbf{0}$, so a nonzero
$\mathbf{v}\in V^{\perp}$ records a linear relation among the columns
of $L_{\mathrm{supp}(\mathbf{v})}$, and $\mathrm{supp}(\mathbf{v})$
is a dependent set. Conversely, every dependent set includes $\mathrm{supp}(\mathbf{v})$
for some nonzero $\mathbf{v}\in V^{\perp}$. Applied to a circuit
$C$, this yields a nonzero $\mathbf{v}\in V^{\perp}$ with $\mathrm{supp}(\mathbf{v})=C$,
since $\mathrm{supp}(\mathbf{v})$ is dependent and no proper subset
of $C$ is; such a vector is called a \emph{circuit vector} of $C$.
If $\mathbf{v}_{1}$ and $\mathbf{v}_{2}$ are circuit vectors of
$C$ and $e\in C$, then $\mathbf{v}_{1}(e)$ and $\mathbf{v}_{2}(e)$
are nonzero, and the vector $\mathbf{w}=\mathbf{v}_{2}(e)\mathbf{v}_{1}-\mathbf{v}_{1}(e)\mathbf{v}_{2}\in V^{\perp}$
vanishes at $e$. Its support is thus included in the independent
set $C\backslash\{e\}$, so $\mathbf{w}=\mathbf{0}$. Hence a circuit
vector of $C$ is unique up to a nonzero scalar multiple, and we speak
of \emph{the} circuit vector of $C$. 

Let $B$ be a base of $M$. For each $e\in[n]\backslash B$, there
exists a unique circuit included in $B\cup\{e\}$, called the \emph{fundamental
circuit} of $e$ with respect to $B$ and denoted by $C(e,B)$. Note
that $e\in C(e,B)$. For $j\in B\cup\{e\}$, the set $(B\cup\{e\})\backslash\{j\}$
is a base of $M$ if and only if $j\in C(e,B)$. There are exactly
$n-k$ fundamental circuits with respect to $B$.

We now describe circuit vectors explicitly. Since every circuit is
a fundamental circuit with respect to some base and $B$ is arbitrary,
it suffices to do so for $C=C(e,B)$ with $e\in\left[n\right]\backslash B$.
The submatrix $L_{B}$ is an invertible $k\times k$ matrix, so $\tilde{L}:=L^{-1}_{B}L$
is a $k\times n$ matrix with $\tilde{L}_{B}=I_{k}$, and the column
$\tilde{L}_{e}$ can be written uniquely as $\tilde{L}_{e}=\sum_{j\in B}a_{j}\tilde{L}_{j}$
with $a_{j}\in\mathbb{F}$. Set $a_{e}=-1$ and $a_{j}=0$ for $j\notin B\cup\{e\}$,
and define $\mathbf{v}_{C}\in\mathbb{F}^{n}$ by $\mathbf{v}_{C}(j)=a_{j}$.
If $\mathbf{v}$ is the circuit vector of $C$ scaled so that $\mathbf{v}(e)=-1$,
then $\sum_{j\in[n]}\mathbf{v}(j)\tilde{L}_{j}=\mathbf{0}$ and $\mathrm{supp}(\mathbf{v})\subseteq B\cup\{e\}$,
so $\tilde{L}_{e}=\sum_{j\in B}\mathbf{v}(j)\tilde{L}_{j}$. By the
uniqueness of this expression, $\mathbf{v}=\mathbf{v}_{C}$. Thus
$\mathbf{v}_{C}$ is the circuit vector of $C$, normalized so that
$\mathbf{v}_{C}(e)=-1$. 

For distinct $e,e'\in[n]\backslash B$, we have $\mathbf{v}_{C(e,B)}(e)=-1$
and $\mathbf{v}_{C(e,B)}(e')=0$, so the circuit vectors of the $n-k$
fundamental circuits with respect to $B$ are linearly independent.
Since $\dim V^{\perp}=n-k$, they span $V^{\perp}$. Therefore the
circuit vectors $\mathbf{v}_{C}$ with $C\in\mathcal{C}(M)$ span
$V^{\perp}$, and they form a matroid on $\mathcal{C}(M)$ of rank
$n-k$. This matroid, denoted by $\delta_{OW}(M,L)$, is the \emph{derived
matroid} of $(M,L)$ introduced by Oxley and Wang \cite{Oxley 2}. 

\subsection{Adjoint matroids}

Let $J\subseteq\left[n\right]$ be a subset of size $k-1$. Let $L_{J,i}$
denote the matrix obtained from $L_{J}$ by deleting the $i$-th row.
Define a vector $\mathbf{c}_{J}\in\mathbb{F}^{k}$ by 
\[
\mathbf{c}_{J}(i)=\left(-1\right)^{i}\det L_{J,i}.
\]
 The vector $\mathbf{c}_{J}$ is nonzero if and only if the rank of
$L_{J}$ is $k-1$. In this case, $\mathbf{c}_{J}$ is orthogonal
to the hyperplane spanned by the columns of $L_{J}$.

Two nonzero vectors $\mathbf{c}_{J},\mathbf{c}_{J'}\in\mathbb{F}^{k}$
are parallel if and only if they determine the same hyperplane, equivalently,
if and only if $\mathrm{cl}_{M}(J)=\mathrm{cl}_{M}(J')$. In this
case, they represent the same point in $\mathbb{P}^{k-1}$. If $H$
is an mp-flat of $M$, then the vectors $\mathbf{c}_{J}$ for the
independent sets $J\subseteq H$ with $\left|J\right|=k-1$ all represent
the same point in $\mathbb{P}^{k-1}$, which we denote by $\mathbf{c}_{H}$.
We use $\mathbf{c}_{H}$ to denote both the point in $\mathbb{P}^{k-1}$
and any representative vector in $\mathbb{F}^{k}$; the intended meaning
will be clear from the context. From the hyperplane arrangement viewpoint
\ref{enu:HA-viewpoint}, $\mathbf{c}_{H}$ is the point of $\mathbb{P}^{k-1}$
at which the hyperplanes defined by the nonzero columns of $L_{H}$
meet. 

It follows that $\mathbf{c}_{H}\neq\mathbf{c}_{H'}$ if and only if
$H\neq H'$. Thus the points $\mathbf{c}_{H}$ with $H\in\mathcal{H}(M)$
form a matroid on $\mathcal{H}(M)$, which we denote by $\mathrm{Adj}(M,L)$.
It depends on the realization $L$ of $M$, as the following example
shows.
\begin{example}
Consider the two arrangements of six lines in $\mathbb{P}^{2}$ over
$\mathbb{R}$ shown in Figure \ref{fig:Adj-real}, which, from viewpoint
\ref{enu:HA-viewpoint}, realize the same matroid $M$. Let $L$ and
$L'$ be realization matrices of $M$ given by the left and right
arrangements, respectively. In each arrangement, the three points
marked in red are the points $\mathbf{c}_{H}$ for the mp-flats $H=15$,
$24$, and $36$ of $M$. They are collinear in the left arrangement
but not in the right, so $\left\{ 15,24,36\right\} $ has rank $2$
in $\mathrm{Adj}(M,L)$ and $3$ in $\mathrm{Adj}(M,L')$. Hence $\mathrm{Adj}(M,L)$
and $\mathrm{Adj}(M,L')$ differ. 
\begin{figure}[h]
\begin{spacing}{0.4}
\begin{centering}
\begin{center}
\begin{tikzpicture}[font=\scriptsize]

\begin{scope}[xshift=0cm,yshift=0cm,line cap=round,scale=0.75]
 \path (0,0)--++(90:2) coordinate (A1);
 \path (0,0)--++(210:2) coordinate (A2);
 \path (0,0)--++(-30:2) coordinate (A3);

 \path (0,0)--++(30:1) coordinate (B1);
 \path (0,0)--++(150:1) coordinate (B2);
 \path (0,0)--++(270:1) coordinate (B3);

 \draw [name path=line1][thick] (A1)++(60:1.6)--(A2)--++(240:0.6)node[below=0pt]{$1$};
 \draw [thick] (A2)++(180:0.6)node[left=-1pt]{$3$}--(A3)--++(0:0.6);
 \draw [name path=line5][thick] (A3)++(300:0.6)--(A1)--++(120:0.6)node[above left=-3pt]{$5$};

 \draw [thick,dotted,gray] (A1)++(90:1.4)--(B3)--++(270:1);
 \draw [name path=line2][thick] (A2)++(210:0.6)node[below left=-3pt]{$2$}--(B1)--++(30:0.5);
 \draw [name path=line4][thick] (A3)++(330:0.6)--(B2)--++(150:0.5)node[above left=-2pt]{$4$};
 \draw [name path=line6][thick] (B3)++(260:0.6)node[below left=-3pt]{$6$}--++(80:5);

  \fill [name intersections={of=line6 and line1}] (intersection-1) circle (2.5pt);
 \fill [name intersections={of=line6 and line2}] (intersection-1) circle (2.5pt);
 \fill [name intersections={of=line6 and line4}] (intersection-1) circle (2.5pt);
 \fill [name intersections={of=line6 and line5}] (intersection-1) circle (2.5pt);

 \foreach \x in {(A1),(A2),(A3),(B1),(B2),(B3),(0,0)}{
    \fill [black] \x circle (2.5pt);}
 \foreach \x in {(A1),(B3),(0,0)}{
    \fill [red] \x circle (2.5pt);}

\end{scope}

\begin{scope}[xshift=6.5cm,yshift=0cm,line cap=round,scale=0.75]
 \path (0,0)--++(90:2) coordinate (A1);
 \path (0,0)--++(210:2) coordinate (A2);
 \path (0,0)--++(-30:2) coordinate (A3);

 \path (0,0)--++(30:1) coordinate (B1);
 \path (0,0)--++(150:1) coordinate (B2);
 \path (0,0)--++(270:1) coordinate (B3);

 \draw [name path=line1][thick] (A1)++(60:1.6)--(A2)--++(240:0.6)node[below=0pt]{$1$};
 \draw [name path=line3][thick] (A2)++(180:0.6)node[left=-1pt]{$3$}--(A3)--++(0:0.6);
 \draw [name path=line5][thick] (A3)++(300:0.6)--(A1)--++(120:0.6)node[above left=-3pt]{$5$};

 \draw [thick,dotted,gray] (A1)++(90:1.4)--(B3)--++(270:1);
 \draw [name path=line2][thick] (A2)++(210:0.6)node[below left=-3pt]{$2$}--(B1)--++(30:0.5);
 \draw [name path=line4][thick] (A3)++(330:0.6)--(B2)--++(150:0.5)node[above left=-2pt]{$4$};
 \draw [name path=line6][thick] (A1)++(60:1.52)--++(252:5.2)node[below left=-3pt]{$6$};

  \fill [name intersections={of=line6 and line1}] (intersection-1) circle (2.5pt);
 \fill [name intersections={of=line6 and line2}] (intersection-1) circle (2.5pt);
 \fill [name intersections={of=line6 and line4}] (intersection-1) circle (2.5pt);
 \fill [name intersections={of=line6 and line5}] (intersection-1) circle (2.5pt);
 \fill [name intersections={of=line6 and line3}][red] (intersection-1) circle (2.5pt);

 \foreach \x in {(A1),(A2),(A3),(B1),(B2),(0,0)}{
    \fill [black] \x circle (2.5pt);}
 \foreach \x in {(A1),(0,0)}{
    \fill [red] \x circle (2.5pt);}

\end{scope}

\end{tikzpicture}
\par\end{center}
\par\end{centering}
\end{spacing}
\begin{spacing}{0.3}
\noindent\caption{\label{fig:Adj-real}Two realizations of $M$ giving different adjoint
matroids.}
\end{spacing}
\end{figure}
\end{example}

\begin{rem}
The matroid $\mathrm{Adj}(M,L)$ coincides with the type I adjoint
of Bixby and Coullard \cite[Example 2.6]{BC88}, with the matroid
of the derived code of Jurrius and Pellikaan \cite[Definition 4.1]{JP15},
and with the cocircuit matroid described by Kung \cite{Kung}. We
call $\mathrm{Adj}(M,L)$ the \emph{adjoint matroid} of $(M,L)$. 
\end{rem}

Following Cheung \cite{Cheung}, an \emph{adjoint} of $M$ is a simple
matroid $M'$ of the same rank for which there exists an injective
order-preserving map from the inverted flat lattice $\mathcal{L}(M)'$
to $\mathcal{L}(M')$ sending the atoms of $\mathcal{L}(M)'$ onto
those of $\mathcal{L}(M')$. From viewpoint \ref{enu:HA-viewpoint},
one sees that $\mathrm{Adj}(M,L)$ is an adjoint of $M$.

Since the vectors $\mathbf{c}_{H}$ realize $\mathrm{Adj}(M,L)$,
the construction can be iterated. Let $M^{(0)}$ denote the simplification
of $M$, so that $\mathcal{L}(M^{(0)})$ is isomorphic to $\mathcal{L}(M)$,
and let $L^{(0)}$ be a submatrix of $L$ consisting of one column
from each parallel class of $M$. For $i\ge1$, put $M^{(i)}:=\mathrm{Adj}(M^{(i-1)},L^{(i-1)})$,
and let $L^{(i)}$ be a matrix whose columns are representative vectors
$\mathbf{c}_{H}$ for the mp-flats $H$ of $M^{(i-1)}$, so that $L^{(i)}$
realizes $M^{(i)}$ and the sequence $M^{(1)},M^{(2)},\dots$ is determined
by $L$. Kung \cite{Kung} calls the sequence $M^{(0)},M^{(1)},M^{(2)},\dots$
the \emph{Crapo sequence}.

The embedding of $\mathcal{L}(M^{(i)})'$ into $\mathcal{L}(M^{(i+1)})$
sends each pair of flats of $M^{(i)}$ to a modular pair of flats
of $M^{(i+1)}$, and it is surjective if and only if $M^{(i)}$ is
modular. Moreover, $M^{(2i)}$ is an extension of $M^{(2i-2)}$ and
$M^{(2i+1)}$ is an extension of $M^{(2i-1)}$, as observed by Kung
\cite{Kung}; see also \cite[Section 5]{Adjoint}. Hence both sequences
$M^{(0)},M^{(2)},\dots$ and $M^{(1)},M^{(3)},\dots$ \emph{stabilize},
that is, become constant from some term on, if and only if $M^{(t)}$
is modular for some $t$. If $\mathbb{F}$ is finite with $q$ elements,
then each $M^{(i)}$ is a restriction of the projective geometry $\mathrm{PG}(k-1,q)$,
so both sequences stabilize. In this case, the matroid at which $M^{(0)},M^{(2)},\dots$
stabilizes is a simple modular extension of $M^{(0)}$ of the same
rank, and the largest such extension realizable over $\mathbb{F}$
is $\mathrm{PG}(k-1,q)$, cf.~\cite{Kung}.
\begin{example}
Consider the four points in $\mathbb{P}^{2}$ shown in the left panel
of Figure \ref{fig:Adj-ext}. From viewpoint \ref{enu:Span-viewpoint},
they realize $M^{(0)}=U_{3,4}$. The mp-flats of $M^{(0)}$ correspond
to the six lines, each passing through exactly two of the points,
and from viewpoint \ref{enu:HA-viewpoint}, these lines realize $M^{(1)}$,
as shown in the middle panel. Besides the four original points, the
six lines meet at three further points, and from viewpoint \ref{enu:Span-viewpoint},
these seven points realize $M^{(2)}$, as shown in the right panel.

\begin{figure}[h]
\begin{spacing}{0.4}
\begin{centering}
\begin{center}
\begin{tikzpicture}[font=\scriptsize]

\begin{scope}[xshift=0cm,yshift=0cm,line cap=round,scale=0.8]
 \path (0,0)--++(90:2) coordinate (A1);
 \path (0,0)--++(210:2) coordinate (A2);
 \path (0,0)--++(-30:2) coordinate (A3);

 \path (0,0)--++(30:1) coordinate (B1);
 \path (0,0)--++(150:1) coordinate (B2);
 \path (0,0)--++(270:1) coordinate (B3);

 \draw [name path=line1][dotted,gray] (A1)++(60:0.6)--(A2)--++(240:0.6);
 \draw [dotted,gray] (A2)++(180:0.6)--(A3)--++(0:0.6);

 \draw [dotted,gray] (A1)++(90:0.6)--(B3)--++(270:0.5);
 \draw [name path=line2][dotted,gray] (A2)++(210:0.6)--++(30:3.3);
 \draw [name path=line4][dotted,gray] (A3)++(330:0.6)--(B2)--++(150:0.5);

 \draw [dotted,gray] (B3)++(300:0.6)--(B2)--++(120:0.6);

 \foreach \x in {(A2),(B2),(B3),(0,0)}{
    \fill [black] \x circle (2.8pt);}

 \path (270:2) node[]{$M^{(0)}=U_{3,4}$};

\end{scope}

\begin{scope}[xshift=4.5cm,yshift=0cm,line cap=round,scale=0.8]
 \path (0,0)--++(90:2) coordinate (A1);
 \path (0,0)--++(210:2) coordinate (A2);
 \path (0,0)--++(-30:2) coordinate (A3);

 \path (0,0)--++(30:1) coordinate (B1);
 \path (0,0)--++(150:1) coordinate (B2);
 \path (0,0)--++(270:1) coordinate (B3);

 \foreach \x in {(A2),(B2),(B3),(0,0)}{
    \fill [gray] \x circle (2.5pt);}

 \draw [name path=line1][thick] (A1)++(60:0.6)--(A2)--++(240:0.6);
 \draw [thick] (A2)++(180:0.6)--(A3)--++(0:0.6);

 \draw [thick] (A1)++(90:0.6)--(B3)--++(270:0.5);
 \draw [name path=line2][thick] (A2)++(210:0.6)--++(30:3.3);
 \draw [name path=line4][thick] (A3)++(330:0.6)--(B2)--++(150:0.5);

 \draw [thick] (B3)++(300:0.6)--(B2)--++(120:0.6);

 \path (270:2) node[]{$M^{(1)}$};

\end{scope}

\begin{scope}[xshift=9cm,yshift=0cm,line cap=round,scale=0.8]
 \path (0,0)--++(90:2) coordinate (A1);
 \path (0,0)--++(210:2) coordinate (A2);
 \path (0,0)--++(-30:2) coordinate (A3);

 \path (0,0)--++(30:1) coordinate (B1);
 \path (0,0)--++(150:1) coordinate (B2);
 \path (0,0)--++(270:1) coordinate (B3);

 \draw [dotted,gray] (A1)++(60:0.6)--(A2)--++(240:0.6);
 \draw [dotted,gray] (A2)++(180:0.6)--(A3)--++(0:0.6);

 \draw [dotted,gray] (A1)++(90:0.6)--(B3)--++(270:0.5);
 \draw [name path=line2][dotted,gray] (A2)++(210:0.6)--++(30:3.3);
 \draw [dotted,gray] (A3)++(330:0.6)--(B2)--++(150:0.5);

 \draw [name path=line5][dotted,gray] (B3)++(300:0.6)--(B2)--++(120:0.6);

 \foreach \x in {(A2),(B2),(B3),(0,0),(A1),(A3)}{
    \fill [black] \x circle (2.8pt);}
 \foreach \x in {(A1),(A3)}{
    \fill [blue] \x circle (2.8pt);}

 \fill [name intersections={of=line2 and line5}][blue] (intersection-1) circle (2.8pt);

 \path (270:2) node[]{$M^{(2)}$};

\end{scope}

\end{tikzpicture}
\par\end{center}
\par\end{centering}
\end{spacing}
\begin{spacing}{0.3}
\noindent\caption{\label{fig:Adj-ext}$M^{(2)}$ is an extension of $M^{(0)}=U_{3,4}$.}
\end{spacing}
\end{figure}
\end{example}

\subsection{Duality}

Let $\check{L}$ be an $(n-k)\times n$ matrix whose rows span $V^{\perp}$.
This matrix is a realization of the dual matroid $M^{\ast}$. Since
$\dim V^{\perp}=n-k$, the rows $\mathbf{w}_{1},\dots,\mathbf{w}_{n-k}$
of $\check{L}$ are linearly independent, so every circuit vector
$\mathbf{v}_{C}$ can be written uniquely as $\mathbf{v}_{C}=\sum_{i\in\left[n-k\right]}\mathbf{b}_{C}(i)\mathbf{w}_{i}$
with $\mathbf{b}_{C}\in\mathbb{F}^{n-k}$. Each $\mathbf{b}_{C}$
is sent to $\mathbf{v}_{C}$ under the isomorphism $\mathbb{F}^{n-k}\rightarrow V^{\perp}$,
$\mathbf{b}\mapsto\mathbf{b}^{T}\check{L}$, so $\left\{ \mathbf{v}_{C}:C\in\mathcal{C}(M)\right\} $
and $\left\{ \mathbf{b}_{C}:C\in\mathcal{C}(M)\right\} $ form the
same matroid on $\mathcal{C}(M)$. For $j\in\left[n\right]$, we have
$\mathbf{v}_{C}(j)=(\mathbf{b}_{C})^{T}\check{L}_{j}$, which vanishes
if and only if $j\in\left[n\right]\backslash C$. Since $\left[n\right]\backslash C$
is an mp-flat of $M^{\ast}$, the vectors $\mathbf{b}_{C}$ and $\mathbf{c}_{\left[n\right]\backslash C}\in\mathbb{F}^{n-k}$
represent the same point in $\mathbb{P}^{n-k-1}$. Hence the matroid
formed by $\left\{ \mathbf{b}_{C}:C\in\mathcal{C}(M)\right\} $ is
$\mathrm{Adj}(M^{\ast},\check{L})$, and 
\[
\delta_{OW}(M,L)\simeq\mathrm{Adj}(M^{\ast},\check{L})
\]
 under the correspondence $C\mapsto\left[n\right]\backslash C$. This
is \cite[Proposition 6.4]{Adjoint}, stated there in terms of the
type I adjoint. 

\section{\label{sec:Generating-Matroids}Generating Matroids from a Collection
of Sets}

Let $E$ be a finite set. A \emph{seed} is a collection of nonempty
subsets of $E$. From a seed we generate a matroid on $E$ whose dependent-set
collection includes the seed. We recall the dependent-set axioms of
Brylawski \cite{DepAxm}: a collection $\mathfrak{D}\subseteq2^{E}$
is the dependent-set collection of a matroid on $E$ if and only if 
\begin{enumerate}[label=(D\arabic*),start=0]
\item \label{enu:depen0}$\emptyset\notin\mathfrak{D}$. 
\item \label{enu:depen1}If $D\in\mathfrak{D}$ and $D\subseteq D'\subseteq E$,
then $D'\in\mathfrak{D}$.
\item \label{enu:depen2}If $D_{1},D_{2}\in\mathfrak{D}$, then either $D_{1}\cap D_{2}\in\mathfrak{D}$,
or $(D_{1}\cup D_{2})\backslash\{e\}\in\mathfrak{D}$ for all $e\in D_{1}\cap D_{2}$.
\end{enumerate}
Let $\mathfrak{X}\subseteq2^{E}$. The \emph{upward closure} of $\mathfrak{X}$
is 
\[
\mathrm{uc}(\mathfrak{X})=\left\{ Y\subseteq E:X\subseteq Y\text{ for some }X\in\mathfrak{X}\right\} .
\]
 The operator $\mathrm{uc}$ satisfies $\mathfrak{X}\subseteq\mathrm{uc}(\mathfrak{X})$
and is monotone, that is, $\mathrm{uc}(\mathfrak{X})\subseteq\mathrm{uc}(\mathfrak{Y})$
whenever $\mathfrak{X}\subseteq\mathfrak{Y}\subseteq2^{E}$. The \emph{elimination
extension} of $\mathfrak{X}$ is 
\[
\mathrm{ex}(\mathfrak{X})=\mathfrak{X}\cup\left\{ (X_{1}\cup X_{2})\backslash\{e\}:X_{1},X_{2}\in\mathfrak{X},X_{1}\cap X_{2}\notin\mathfrak{X},e\in X_{1}\cap X_{2}\right\} .
\]
 Clearly, $\mathfrak{X}\subseteq\mathrm{ex}(\mathfrak{X})$. These
operators appear in \cite{CDM} in the case where $E$ is the circuit
collection of a matroid.\medskip{}

Define $\Phi=\mathrm{uc}\circ\mathrm{ex}$ and set 
\[
\Phi^{\infty}(\mathfrak{X})=\bigcup\nolimits_{i\ge0}\Phi^{i}(\mathfrak{X}).
\]
 Since $\mathfrak{Y}\subseteq\mathrm{ex}(\mathfrak{Y})\subseteq\mathrm{uc}(\mathrm{ex}(\mathfrak{Y}))=\Phi(\mathfrak{Y})$
for every $\mathfrak{Y}\subseteq2^{E}$, the sequence $\ensuremath{(\Phi^{i}(\mathfrak{X}))_{i\ge0}}$
is increasing, and it stabilizes because $E$ is finite. Hence $\Phi^{\infty}(\mathfrak{X})=\Phi^{t}(\mathfrak{X})$
for some $t\ge0$, and $\Phi^{\infty}(\mathfrak{X})$ is a fixed point
of $\Phi$. If $\emptyset\in\mathfrak{X}$, then $\Phi(\mathfrak{X})=2^{E}$.
By \ref{enu:depen1}, \ref{enu:depen2}, and Proposition \ref{prop:seed}
below, a seed $\mathfrak{X}$ is the dependent-set collection of a
matroid if and only if $\Phi^{\infty}(\mathfrak{X})=\mathfrak{X}$.
\begin{rem}
\label{rem:not-monotone}The operators $\Phi$ and $\Phi^{\infty}$
are not monotone. Let $E=\{1,2,3\}$, $\mathfrak{X}=\left\{ \{1,2\},\{2,3\}\right\} $,
and $\mathfrak{Y}=\mathfrak{X}\cup\left\{ \{2\}\right\} $. Then $\Phi^{\infty}(\mathfrak{X})=\Phi(\mathfrak{X})$
is the dependent-set collection of $U_{1,3}$, and $\Phi^{\infty}(\mathfrak{Y})=\Phi(\mathfrak{Y})$
is that of the matroid on $E$ in which $2$ is a loop and $\{1,3\}$
is independent. Neither includes the other, although $\mathfrak{X}\subseteq\mathfrak{Y}$. 
\end{rem}

Define the \emph{girth} of $\mathfrak{X}$ to be $\mathrm{girth}(\mathfrak{X})=\min\left\{ \left|X\right|:X\in\mathfrak{X}\right\} $,
with $\mathrm{girth}(\mathfrak{X})=\infty$ when $\mathfrak{X}$ is
empty. If $\mathfrak{X}$ is the dependent-set collection of a matroid,
this agrees with the usual girth. In these terms, a seed is precisely
a collection of girth at least $1$. 
\begin{prop}
\label{prop:seed}For a seed $\mathfrak{X}$, the collection $\Phi^{\infty}(\mathfrak{X})$
is the dependent-set collection of a matroid on $E$, and $\mathrm{girth}(\Phi^{\infty}(\mathfrak{X}))=\mathrm{girth}(\mathfrak{X})$. 
\end{prop}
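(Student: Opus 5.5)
Write $\mathfrak{D}=\Phi^{\infty}(\mathfrak{X})$. The plan is to verify the axioms \ref{enu:depen0}--\ref{enu:depen2} for $\mathfrak{D}$ directly, using only that $\mathfrak{D}$ is a fixed point of $\Phi$, and then to compare girths by tracking the sizes of sets created at each step.

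\textbf{Upward closure and elimination.} Since $\Phi(\mathfrak{D})=\mathfrak{D}$ and $\mathfrak{D}\subseteq\mathrm{ex}(\mathfrak{D})\subseteq\mathrm{uc}(\mathrm{ex}(\mathfrak{D}))=\mathfrak{D}$, we have $\mathrm{ex}(\mathfrak{D})=\mathfrak{D}$, and then $\mathrm{uc}(\mathfrak{D})=\mathfrak{D}$. The latter is \ref{enu:depen1}. For \ref{enu:depen2}, take $D_{1},D_{2}\in\mathfrak{D}$. If $D_{1}\cap D_{2}\notin\mathfrak{D}$, then for every $e\in D_{1}\cap D_{2}$ the set $(D_{1}\cup D_{2})\backslash\{e\}$ lies in $\mathrm{ex}(\mathfrak{D})=\mathfrak{D}$ by definition. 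So \ref{enu:depen1} and \ref{enu:depen2} are immediate. The substance is \ref{enu:depen0} together with the girth claim, since $\emptyset\notin\mathfrak{D}$ is equivalent to $\mathrm{girth}(\mathfrak{D})\ge1$.

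\textbf{Girth.} Put $g=\mathrm{girth}(\mathfrak{X})\ge1$; the case $\mathfrak{X}=\emptyset$ is trivial because then $\mathfrak{D}=\emptyset$. Since $\mathfrak{X}\subseteq\mathfrak{D}$, we get $\mathrm{girth}(\mathfrak{D})\le g$, so it suffices to show by induction on $i$ that $\mathrm{girth}(\Phi^{i}(\mathfrak{X}))\ge g$. The operator $\mathrm{uc}$ does not decrease girth. For $\mathrm{ex}$, let $\mathfrak{Y}$ have girth at least $g$, and consider a new set $(Y_{1}\cup Y_{2})\backslash\{e\}$ with $Y_{1},Y_{2}\in\mathfrak{Y}$, $Y_{1}\cap Y_{2}\notin\mathfrak{Y}$, and $e\in Y_{1}\cap Y_{2}$.
\begin{itemize}
\item If $Y_{1}\ne Y_{2}$, we may assume without loss of generality that $Y_{1}\not\subseteq Y_{2}$. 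Choose $y\in Y_{1}\backslash Y_{2}$. Then $(Y_{1}\cup Y_{2})\backslash\{e\}\supseteq Y_{2}\backslash\{e\}\cup\{y\}$, which has size $|Y_{2}|\ge g$.
\item If $Y_{1}=Y_{2}$, then $Y_{1}\cap Y_{2}=Y_{1}\in\mathfrak{Y}$, so this case does not occur.
\end{itemize}
In both situations every new set has size at least $g$, so $\mathrm{girth}(\mathrm{ex}(\mathfrak{Y}))\ge g$. This gives $\mathrm{girth}(\Phi^{i}(\mathfrak{X}))\ge g\ge1$ for all $i$, hence $\emptyset\notin\mathfrak{D}$, which is \ref{enu:depen0}, and $\mathrm{girth}(\mathfrak{D})=g$.

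The step I expected to be the main obstacle was this girth bound, since elimination can shrink sets and a careless estimate only gives $|Y_{1}\cup Y_{2}|-1$. The observation that removes the difficulty is that the condition $Y_{1}\cap Y_{2}\notin\mathfrak{Y}$ forces $Y_{1}\ne Y_{2}$ and, more to the point, keeps the union strictly larger than each of the two sets. With \ref{enu:depen0}--\ref{enu:depen2} verified, Brylawski's axioms show that $\mathfrak{D}$ is the dependent-set collection of a matroid on $E$.
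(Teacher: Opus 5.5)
Your proof is correct and takes essentially the same route as the paper. You show that girth cannot drop under $\mathrm{ex}$, because $Y_{1}\cap Y_{2}\notin\mathfrak{Y}$ forces $Y_{1}\neq Y_{2}$ (your explicit set $(Y_{2}\backslash\{e\})\cup\{y\}$ is a concrete form of the paper's count $|X_{1}\cup X_{2}|\ge\mathrm{girth}+1$), note that $\mathrm{uc}$ trivially preserves girth, and derive (D1) and (D2) from $\mathfrak{D}$ being a fixed point of $\Phi$.
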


\begin{proof}
Put $\mathfrak{D}=\Phi^{\infty}(\mathfrak{X})$. We first prove that
$\mathrm{girth}(\mathfrak{D})=\mathrm{girth}(\mathfrak{X})$. The
inclusion $\mathfrak{X}\subseteq\mathfrak{D}$ gives $\mathrm{girth}(\mathfrak{D})\le\mathrm{girth}(\mathfrak{X})$,
and for the reverse inequality, it suffices to show that $\mathrm{girth}(\Phi(\mathfrak{Y}))\ge\mathrm{girth}(\mathfrak{Y})$
for every $\mathfrak{Y}\subseteq2^{E}$. Let $X_{1},X_{2}\in\mathfrak{Y}$
with $X_{1}\cap X_{2}\notin\mathfrak{Y}$. Then $X_{1}\neq X_{2}$,
so $X_{1}\cup X_{2}$ properly includes $X_{1}$ or $X_{2}$; hence
$\left|X_{1}\cup X_{2}\right|\ge\mathrm{girth(\mathfrak{Y})}+1$ and
$\left|\left(X_{1}\cup X_{2}\right)\backslash\{e\}\right|\ge\mathrm{girth(\mathfrak{Y})}$
for every $e\in X_{1}\cap X_{2}$. Thus $\mathrm{girth(\mathrm{ex}(\mathfrak{Y}))}\ge\mathrm{girth(\mathfrak{Y})}$,
and the claim follows since every member of $\Phi(\mathfrak{Y})=\mathrm{uc}(\mathrm{ex}(\mathfrak{Y}))$
includes a member of $\mathrm{ex}(\mathfrak{Y})$.

We now verify \ref{enu:depen0}–\ref{enu:depen2} for $\mathfrak{D}$.
Since $\mathfrak{X}$ is a seed, $\mathrm{girth}(\mathfrak{D})=\mathrm{girth}(\mathfrak{X})\ge1$,
so $\emptyset\notin\mathfrak{D}$. This gives \ref{enu:depen0}. Moreover,
$\mathfrak{D}=\Phi(\mathfrak{D})=\mathrm{uc}(\mathrm{ex}(\mathfrak{D}))\supseteq\mathrm{uc}(\mathfrak{D})$,
so $\mathrm{uc}(\mathfrak{D})=\mathfrak{D}$, giving \ref{enu:depen1}.
Let $D_{1},D_{2}\in\mathfrak{D}$ with $D_{1}\cap D_{2}\notin\mathfrak{D}$.
Then $\left(D_{1}\cup D_{2}\right)\backslash\{e\}\in\mathrm{ex}(\mathfrak{D})\subseteq\mathrm{uc}(\mathrm{ex}(\mathfrak{D}))=\Phi(\mathfrak{D})=\mathfrak{D}$
for every $e\in D_{1}\cap D_{2}$, giving \ref{enu:depen2}.
\end{proof}

\begin{defn}
\label{def:GenX}For a seed $\mathfrak{X}$, we denote by $\mathrm{Gen}(\mathfrak{X})$
the matroid on $E$ whose dependent-set collection is $\Phi^{\infty}(\mathfrak{X})$,
and say that $\mathfrak{X}$ \emph{generates} $\mathrm{Gen}(\mathfrak{X})$. 
\end{defn}

If $\mathfrak{X}=\emptyset$, then $\mathrm{ex}(\mathfrak{X})=\mathrm{uc}(\mathfrak{X})=\emptyset$,
so $\Phi^{\infty}(\mathfrak{X})=\emptyset$ and $\mathrm{Gen}(\mathfrak{X})$
is a free matroid.
\begin{prop}
\label{prop:simple-gen}Let $\mathfrak{X}$ be a seed.
\begin{enumerate}
\item \label{enu:girth-1}$\mathrm{Gen}(\mathfrak{X})$ is simple if and
only if $\mathrm{girth}(\mathfrak{X})\ge3$.
\item \label{enu:girth-2}If $\mathfrak{X}$ is nonempty, then $\mathrm{Gen}(\mathfrak{X})$
has rank at least $\mathrm{girth}(\mathfrak{X})-1$.
\end{enumerate}
\end{prop}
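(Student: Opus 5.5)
Both parts follow from Proposition \ref{prop:seed}, which says that the dependent-set collection $\mathfrak{D}=\Phi^{\infty}(\mathfrak{X})$ of $\mathrm{Gen}(\mathfrak{X})$ satisfies $\mathrm{girth}(\mathfrak{D})=\mathrm{girth}(\mathfrak{X})$. The plan is to translate the two statements into statements about the girth of $\mathfrak{D}$ and then read them off.

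For \eqref{enu:girth-1}, recall that a matroid is simple exactly when it has no loops and no parallel pairs. Equivalently, it has no dependent set of size $1$ or $2$. Since $\mathfrak{D}$ is upward closed, having no dependent set of size at most $2$ is the same as $\mathrm{girth}(\mathfrak{D})\ge3$, where the value $\infty$ is allowed and covers the free matroid. By Proposition \ref{prop:seed}, this holds if and only if $\mathrm{girth}(\mathfrak{X})\ge3$. When $\mathfrak{X}=\emptyset$, both sides hold trivially: $\mathrm{Gen}(\mathfrak{X})$ is free and hence simple, and the girth is $\infty$.

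For \eqref{enu:girth-2}, suppose $\mathfrak{X}\neq\emptyset$ and put $g=\mathrm{girth}(\mathfrak{X})$. This is finite and at least $1$, and Proposition \ref{prop:seed} gives $\mathrm{girth}(\mathfrak{D})=g$. Hence every subset of $E$ of size at most $g-1$ is independent in $\mathrm{Gen}(\mathfrak{X})$. It remains to check that such a subset exists. Pick $X\in\mathfrak{X}$ with $|X|=g$. Then $|E|\ge g$, so $E$ has a subset of size $g-1$, for instance $X$ minus one element. That subset is independent, so the rank is at least $g-1$.

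There is no real obstacle here. The only point needing care is that the nonemptiness hypothesis in \eqref{enu:girth-2} is what makes $g$ finite and guarantees that a subset of size $g-1$ exists inside $E$.
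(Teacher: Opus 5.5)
Your proof is correct and matches the paper's argument: both parts are read off from $\mathrm{girth}(\Phi^{\infty}(\mathfrak{X}))=\mathrm{girth}(\mathfrak{X})$ in Proposition~\ref{prop:seed}. For part (2) the paper also takes a member of minimum size $g$, of $\Phi^{\infty}(\mathfrak{X})$ rather than of $\mathfrak{X}$ (which makes no difference), and uses that a subset of size $g-1$ is independent. The one superfluous step is the appeal to upward closure in part (1), since having no dependent set of size at most $2$ is exactly the definition of girth at least $3$.
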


\begin{proof}
We have $\mathrm{girth}(\Phi^{\infty}(\mathfrak{X}))=\mathrm{girth}(\mathfrak{X})$
by Proposition \ref{prop:seed}. For (\ref{enu:girth-1}), note that
a matroid is simple if and only if it has no circuit of size at most
$2$, that is, if and only if its dependent-set collection has girth
at least $3$. For (\ref{enu:girth-2}), let $C$ be a member of $\Phi^{\infty}(\mathfrak{X})$
with $\left|C\right|=\mathrm{girth}(\mathfrak{X})$, which exists
as $\mathfrak{X}$ is nonempty. By the minimality of $\left|C\right|$,
every proper subset of $C$ is independent in $\mathrm{Gen}(\mathfrak{X})$,
so $\mathrm{Gen}(\mathfrak{X})$ has rank at least $\left|C\right|-1=\mathrm{girth}(\mathfrak{X})-1$. 
\end{proof}

\section{\label{sec:Syzygy}Gaussian Operations and Syzygy Matroids}

We construct a matroid on $\mathcal{C}(M)$ by producing a seed from
the circuits of $M$ and applying the construction of Section \ref{sec:Generating-Matroids}.
The seed records which collections of circuits are spanned by fewer
circuits than they contain, where spanning is understood in the sense
of a matroid-theoretic analogue of Gaussian elimination developed
below.\medskip{}

A loop $e$ of $M$ forms the circuit $\left\{ e\right\} $, which
meets no other circuit of $M$, so loops play no part in the Gaussian
operations. Put $p=\left|\mathrm{cl}_{M}(\emptyset)\right|$. Then
$\mathcal{C}(M)$ is the disjoint union of $\mathcal{C}(M\backslash\mathrm{cl}_{M}(\emptyset))$
and the collection of the $p$ circuits $\left\{ e\right\} $ with
$e\in\mathrm{cl}_{M}(\emptyset)$. We define 
\[
\mathrm{Syz}(M):=\mathrm{Syz}(M\backslash\mathrm{cl}_{M}(\emptyset))\oplus U_{p,p},
\]
 where $U_{p,p}\simeq\bigoplus^{p}_{i=1}U_{1,1}$ is the free matroid
on the collection of these $p$ circuits, and $\mathrm{Syz}(M\backslash\mathrm{cl}_{M}(\emptyset))$
is given by the construction below, since $M\backslash\mathrm{cl}_{M}(\emptyset)$
is loopless. Accordingly, for the rest of this section, $M$ is assumed
to be loopless. Coloops of $M$ lie in no circuit and do not affect
the construction, so they are allowed. Proposition~\ref{prop:simple}
below, together with the simplicity of $U_{p,p}$, shows that $\mathrm{Syz}(M)$
is also simple when $M$ has loops. 

\subsection{Circuit systems}

A \emph{circuit system} is a collection of circuits of $M$. For a
base $B$ of $M$, the \emph{fundamental circuit system} of $B$ is
$\mathcal{C}_{B}:=\left\{ C(e,B):e\in E(M)\backslash B\right\} $.
The map $\mathcal{B}(M)\rightarrow\left\{ \mathcal{C}_{B}:B\in\mathcal{B}(M)\right\} $
defined by $B\mapsto\mathcal{C}_{B}$ is surjective by definition
but need not be injective. 
\begin{example}
Consider an arrangement of four points $P_{1},\dots,P_{4}$ in $\mathbb{P}^{1}$
such that $P_{1}=P_{2}$ and $P_{2},P_{3},P_{4}$ are distinct (see
Figure \ref{fig:PA}), and let $M$ be the rank-$2$ matroid on $[4]$
given by this arrangement from viewpoint \ref{enu:HA-viewpoint}.
Then $\mathcal{B}(M)=\{13,14,23,24,34\}$ and $\mathcal{C}(M)=\{12,134,234\}$.
Moreover, $\mathcal{C}_{13}=\mathcal{C}_{14}=\left\{ 12,134\right\} $,
$\mathcal{C}_{23}=\mathcal{C}_{24}=\left\{ 12,234\right\} $, and
$\mathcal{C}_{34}=\left\{ 134,234\right\} $, so the map $B\mapsto\mathcal{C}_{B}$
is not injective. 
\begin{figure}[h]
\begin{spacing}{0.4}
\begin{centering}
\begin{center}
\begin{tikzpicture}[font=\scriptsize]

\begin{scope}[line cap=round,scale=0.9]

 \path (-1.8,0.18) coordinate (A1) node[above=1pt]{$P_1$};
 \path (-1.8,0) coordinate (A2) node[below=1.5pt]{$P_2$};
 \path (0,0) coordinate (A3) node[above=2pt]{$P_3$};
 \path (1.8,0) coordinate (A4) node[above=2pt]{$P_4$};

 \draw [thick] (-2.6,0)--(2.6,0);
 \path (2.7,0.1) node[right]{$\mathbb{P}^1$};

 \foreach \x in {(A1),(A2),(A3),(A4)}{
    \fill [black] \x circle (2.6pt);}

\end{scope}

\end{tikzpicture}
\par\end{center}
\par\end{centering}
\end{spacing}
\begin{spacing}{0.3}
\noindent\caption{\label{fig:PA}An arrangement of four points in $\mathbb{P}^{1}$.}
\end{spacing}
\end{figure}
\end{example}

\begin{prop}
\label{prop:echelon} A circuit system $\mathcal{S}$ of $M$ is a
fundamental circuit system if and only if $\left|\mathcal{S}\right|=n-k$
and each member of $\mathcal{S}$ contains an element lying in no
other member of $\mathcal{S}$.
\end{prop}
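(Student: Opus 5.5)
The plan is to prove the two directions separately. Both rest only on the facts recalled in Section \ref{sec:Realizations}: a base $B$ has $n-k$ fundamental circuits, $e\in C(e,B)$, and $C(e,B)$ is the \emph{unique} circuit included in $B\cup\{e\}$.

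For the forward direction, let $\mathcal{S}=\mathcal{C}_B$ for a base $B$.
\begin{enumerate}
\item For distinct $e,e'\in E(M)\backslash B$, the circuit $C(e',B)$ is included in $B\cup\{e'\}$, so it does not contain $e$.
\item In particular $C(e,B)\neq C(e',B)$, because $e$ lies in the first and not in the second. Hence $\left|\mathcal{C}_B\right|=n-k$.
\item The same observation shows that $e$ lies in $C(e,B)$ and in no other member of $\mathcal{C}_B$.
\end{enumerate}

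For the converse, write $\mathcal{S}=\{C_1,\dots,C_{n-k}\}$ with the $C_i$ distinct. Choose $e_i\in C_i$ lying in no $C_j$ with $j\neq i$.
\begin{enumerate}
\item The $e_i$ are pairwise distinct: if $e_i=e_j$ with $i\neq j$, then $e_i$ would lie in $C_j$, contradicting the choice of $e_i$.
\item Put $X=\{e_1,\dots,e_{n-k}\}$ and $B=E(M)\backslash X$, so that $\left|B\right|=k$.
\item Each $C_i$ contains no $e_j$ with $j\neq i$, so $C_i\subseteq B\cup\{e_i\}$.
\item Since $C_i$ is a circuit, $e_i\in\mathrm{cl}_M(C_i\backslash\{e_i\})\subseteq\mathrm{cl}_M(B)$.
\item Therefore $\mathrm{cl}_M(B)=E(M)$, so $r_M(B)=k=\left|B\right|$, and $B$ is a base.
\item By uniqueness of the circuit in $B\cup\{e_i\}$, we get $C_i=C(e_i,B)$. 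As $i$ ranges over $[n-k]$, $e_i$ ranges over all of $E(M)\backslash B$, so $\mathcal{S}=\mathcal{C}_B$.
\end{enumerate}

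The only step needing care is step 5 of the converse: we must show that the complement of the chosen private elements is a base, not merely a set of size $k$. The spanning argument in step 4 settles this, since a spanning set of size equal to the rank is a base. Looplessness is not used anywhere.
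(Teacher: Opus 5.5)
Your proof is correct and follows the paper's argument essentially step for step. In the converse you take $B$ to be the complement of the chosen private elements $e_i$, show that $B$ spans because $e_i\in\mathrm{cl}_M(C_i\backslash\{e_i\})\subseteq\mathrm{cl}_M(B)$, and conclude $C_i=C(e_i,B)$ by uniqueness. You also spell out two points the paper leaves implicit: that the $e_i$ are pairwise distinct (so $|B|=k$), and that the fundamental circuits are distinct (so $|\mathcal{C}_B|=n-k$).
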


\begin{proof}
If $\mathcal{S}=\mathcal{C}_{B}$ for a base $B$ of $M$, then $\left|\mathcal{S}\right|=n-k$,
and for $e\in E(M)\backslash B$, the circuit $C(e,B)$ is the only
member of $\mathcal{S}$ that contains $e$. Conversely, write $\mathcal{S}=\left\{ C_{1},\dots,C_{n-k}\right\} $
and choose $e_{i}\in C_{i}\backslash\bigcup_{j\neq i}C_{j}$ for $i\in\left[n-k\right]$.
For $B:=E(M)\backslash\{e_{1},\dots,e_{n-k}\}$, we have $C_{i}\backslash\{e_{i}\}\subseteq B$
for each $i$, so $e_{i}\in\mathrm{cl}_{M}(C_{i}\backslash\{e_{i}\})\subseteq\mathrm{cl}_{M}(B)$.
Hence $B$ spans $M$, and since $\left|B\right|=k$, it is a base
of $M$. Then $C_{i}\subseteq B\cup\{e_{i}\}$ gives $C_{i}=C(e_{i},B)$,
and therefore $\mathcal{S}=\mathcal{C}_{B}$. 
\end{proof}

\begin{lem}
\label{lem:fund-circ}Let $B$ be a base of $M$. Let $e_{1},e_{2}\in E(M)\backslash B$
be distinct and put $\tilde{B}=B\cup\{e_{1},e_{2}\}$. Let $C_{1},\dots,C_{m}$
be the circuits included in $\tilde{B}$.
\begin{enumerate}
\item \label{enu:fund1}For each $x\in\tilde{B}$, either no $C_{i}$ contains
$x$, or all but exactly one of them do; in the latter case the remaining
$C_{i}$ is the unique circuit included in $\tilde{B}\backslash\{x\}$. 
\item \label{enu:fund2}The sets $\bigl(\bigcup_{i\in[m]}C_{i}\bigr)\backslash C_{j}$,
$j\in[m]$, form a partition of $\bigcup_{i\in[m]}C_{i}$; in particular,
$\bigcup_{i\in[m]}C_{i}=C_{j}\cup C_{j'}$ for all distinct $j,j'\in[m]$.
\item \label{enu:fund3}Two of the $C_{i}$ are disjoint if and only if
$m=2$. 
\end{enumerate}
\end{lem}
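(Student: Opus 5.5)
The plan is to work in the restriction $M|\tilde{B}$, which has rank $k$ and nullity $2$, since $B\subseteq\tilde{B}$ is a base of it and $|\tilde{B}|=k+2$. Let $Z=\bigcup_{i\in[m]}C_{i}$. The basic tool will be the following fact: for $x\in\tilde{B}$, the set $\tilde{B}\backslash\{x\}$ has rank $k$ if and only if $x$ is not a coloop of $M|\tilde{B}$, equivalently $x\in Z$. In that case $\tilde{B}\backslash\{x\}$ has nullity exactly $1$, so it includes a unique circuit of $M$. If instead $x\notin Z$, then every $C_{i}$ avoids $x$, so the first alternative of (\ref{enu:fund1}) holds.

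For (\ref{enu:fund1}), take $x\in Z$ and let $C$ be the unique circuit included in $\tilde{B}\backslash\{x\}$. The circuits among the $C_{i}$ that avoid $x$ are exactly the circuits included in $\tilde{B}\backslash\{x\}$, so $C$ is the only one of them not containing $x$, and all the others contain $x$. This settles (\ref{enu:fund1}).

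For (\ref{enu:fund2}), part (\ref{enu:fund1}) assigns to each $x\in Z$ exactly one index $j$ with $x\notin C_{j}$. Hence the sets $Z\backslash C_{j}$ cover $Z$ and are pairwise disjoint. I also need each block to be nonempty, i.e. $C_{j}\neq Z$ for every $j$. Suppose $C_{j}=Z$. There are at least two circuits in $\tilde{B}$, namely $C(e_{1},B)\neq C(e_{2},B)$. So some $C_{i}\neq C_{j}$ exists, and then $C_{i}\subsetneq Z=C_{j}$, which contradicts incomparability of circuits. For distinct $j,j'$, every $x\in Z$ lies outside at most one of $C_{j},C_{j'}$, so it lies in their union; this gives $Z=C_{j}\cup C_{j'}$.

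For (\ref{enu:fund3}), suppose first $C_{1}\cap C_{2}=\emptyset$ and $m\ge3$. Take any $x\in C_{1}$. By (\ref{enu:fund1}), all circuits but one contain $x$, and $x\notin C_{2}$, so $C_{3}\ni x$. Hence $C_{1}\subseteq C_{3}$, and similarly $C_{2}\subseteq C_{3}$, which contradicts incomparability. Conversely, suppose $m=2$. Any $x\in C_{1}\cap C_{2}$ would lie in all circuits, contradicting (\ref{enu:fund1}), since $x\in Z$ forces exactly one circuit to omit $x$. So $C_{1}$ and $C_{2}$ are disjoint.

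The step I expect to be the main obstacle is the nullity bookkeeping in (\ref{enu:fund1}). There one must confirm that $x\in Z$ forces $r(\tilde{B}\backslash\{x\})=k$, because $x$ lies in a circuit and so is not a coloop. One also needs the standard fact that a set of nullity $1$ includes exactly one circuit.
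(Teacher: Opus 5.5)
Your proposal is correct and follows essentially the same route as the paper. You obtain (1) from the rank dichotomy for $\tilde{B}\backslash\{x\}$, which you phrase as whether $x$ is a coloop of $M|\tilde{B}$; (2) comes from (1) together with $m\ge2$ and the incomparability of circuits, which makes each block nonempty; and (3) is read off from (1). The only difference is cosmetic: in (3), $C_{1}\subsetneq C_{3}$ already gives the contradiction, so the extra observation $C_{2}\subseteq C_{3}$ is unnecessary.
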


\begin{proof}
Since $\tilde{B}$ includes the base $B$, it has rank $k$ and size
$k+2$. The distinct fundamental circuits $C(e_{1},B)$ and $C(e_{2},B)$
are included in $\tilde{B}$, so $m\ge2$. 

(1) Let $x\in\tilde{B}$. The set $\tilde{B}\backslash\{x\}$ has
size $k+1$ and rank $k$ or $k-1$. If $r(\tilde{B}\backslash\{x\})=k-1$,
then $x$ is a coloop of $M|\tilde{B}$, so no $C_{i}$ contains $x$.
If $r(\tilde{B}\backslash\{x\})=k$, then $\tilde{B}\backslash\{x\}$
is a $(k+1)$-element set including a base, so it includes a unique
circuit $C$. Any $C_{i}$ not containing $x$ is included in $\tilde{B}\backslash\{x\}$,
hence equals $C$.

(2) For $j\in[m]$, the set $P_{j}:=\bigl(\bigcup_{i\in[m]}C_{i}\bigr)\backslash C_{j}$
is nonempty since $m\ge2$. By (\ref{enu:fund1}), each $x\in\bigcup_{i\in[m]}C_{i}$
is contained in all but exactly one of $C_{1},\dots,C_{m}$; hence
$x$ lies in exactly one $P_{j}$. Thus $P_{1},\dots,P_{m}$ form
a partition of $\bigcup_{i\in[m]}C_{i}$. For distinct $j,j'\in[m]$,
we have $\bigl(\bigcup_{i\in[m]}C_{i}\bigr)\backslash(C_{j}\cup C_{j'})=P_{j}\cap P_{j'}=\emptyset$,
so $\bigcup_{i\in[m]}C_{i}\subseteq C_{j}\cup C_{j'}$, and the reverse
inclusion is immediate.

(3) Suppose that $m=2$. By (\ref{enu:fund1}), each $x\in\tilde{B}$
lies in at most one $C_{i}$, so $C_{1}\cap C_{2}=\emptyset$. Conversely,
suppose that two of the $C_{i}$, say $C_{1}$ and $C_{2}$, are disjoint
and $m\ge3$. By (\ref{enu:fund1}), every element of $C_{1}$ lies
in $C_{3}$, so $C_{1}\subsetneq C_{3}$, a contradiction. Hence $m=2$. 
\end{proof}

For a matroid $N$ and a subset $X\subseteq E(N)$, the nullity of
$X$ in $N$ is $\mathrm{nullity}_{N}(X):=\left|X\right|-r_{N}(X)$.
\begin{lem}
\label{lem:one-circ}A subset $D$ of $E(M)$ includes exactly one
circuit of $M$ if and only if $\mathrm{nullity}(D)=1$. 
\end{lem}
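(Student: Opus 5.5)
We prove the two implications separately, working in the restriction $M|D$. A subset of $D$ is a circuit of $M$ exactly when it is a circuit of $M|D$, and $\mathrm{nullity}_{M}(D)=\left|D\right|-r(D)$. Throughout, fix a basis $I$ of $D$, that is, a maximal independent subset of $D$, so that $\left|I\right|=r(D)$ and $\mathrm{nullity}(D)=\left|D\backslash I\right|$.

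\emph{Nullity $1$ implies exactly one circuit.} Suppose $\mathrm{nullity}(D)=1$, and write $D\backslash I=\{e\}$. Since $D$ is not independent, it includes at least one circuit. For uniqueness, let $C\subseteq D$ be any circuit. Since $C\not\subseteq I$, we have $e\in C$, and hence $C\subseteq I\cup\{e\}$. Thus $C$ is the unique circuit of $I\cup\{e\}$, the fundamental circuit of $e$ with respect to $I$ in $M|D$. The uniqueness of that circuit is the standard fact recalled in Section~\ref{sec:Realizations}, applied to the base $I$ of $M|D$. Hence all circuits in $D$ coincide.

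\emph{Exactly one circuit implies nullity $1$.} First, $\mathrm{nullity}(D)\ge1$, since $D$ includes a circuit and is therefore dependent. Suppose for contradiction that $\mathrm{nullity}(D)\ge2$, and choose distinct $e_{1},e_{2}\in D\backslash I$. The fundamental circuits $C(e_{1},I)$ and $C(e_{2},I)$ in $M|D$ are both included in $D$. They are distinct, because $e_{1}\in C(e_{1},I)$ while $C(e_{2},I)\subseteq I\cup\{e_{2}\}$ does not contain $e_{1}$. This gives two circuits in $D$, a contradiction.

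Alternatively, this direction follows from Lemma~\ref{lem:fund-circ}, applied to $M|D$ with base $I$, which gives $m\ge2$ circuits in $I\cup\{e_{1},e_{2}\}$. The only point requiring care in the whole argument is the existence and uniqueness of fundamental circuits relative to a basis of $D$ rather than a base of $M$. This is handled by passing to $M|D$, where $I$ is a base, so no real obstacle remains.
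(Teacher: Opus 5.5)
Your proof is correct and follows essentially the same route as the paper: fix a base of $M|D$, rule out nullity at least $2$ using two distinct fundamental circuits, and, when $D=I\cup\{e\}$, use the uniqueness of the fundamental circuit of $e$ in $M|D$. The step deducing $e\in C$ in the first direction is harmless but unnecessary, since $C\subseteq D=I\cup\{e\}$ already holds.
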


\begin{proof}
Let $B$ be a base of $M|D$ and write $C_{M|D}(e,B)$ for the fundamental
circuit taken in $M|D$. Suppose that $D$ includes exactly one circuit.
Then $D$ is dependent, so $D\backslash B\neq\emptyset$. If $e_{1},e_{2}\in D\backslash B$
were distinct, then $C_{M|D}(e_{1},B)$ and $C_{M|D}(e_{2},B)$ would
be distinct circuits of $M|D$ and hence of $M$. Thus $\left|D\right|-r(D)=\left|D\backslash B\right|=1$.
Conversely, if $\mathrm{nullity}(D)=1$, then $D=B\cup\{e\}$ for
some $e$, and $C_{M|D}(e,B)$ is the unique circuit of $M$ included
in $D$.
\end{proof}

In particular, if $C_{1}$ and $C_{2}$ are distinct circuits of $M$,
then $\mathrm{nullity}(C_{1}\cup C_{2})\ge2$, since $C_{1}\cup C_{2}$
includes two distinct circuits.
\begin{lem}
\label{lem:two-circ}Let $C_{1}$ and $C_{2}$ be distinct circuits
of $M$ with $C_{1}\cap C_{2}\neq\emptyset$. The following are equivalent.
\begin{enumerate}
\item \label{enu:two-circ-1}$\mathrm{nullity}(C_{1}\cup C_{2})=2$. 
\item \label{enu:two-circ-2}There is $c\in C_{1}\cap C_{2}$ such that
$(C_{1}\cup C_{2})\backslash\{c\}$ includes exactly one circuit.
\item \label{enu:two-circ-3}For all $c\in C_{1}\cap C_{2}$, the set $(C_{1}\cup C_{2})\backslash\{c\}$
includes exactly one circuit.
\item \label{enu:two-circ-4}There exist a base $B$ of $M$ and $e_{1},e_{2}\in E(M)\backslash B$
such that $C_{1}\cup C_{2}\subseteq B\cup\{e_{1},e_{2}\}$ and $C_{i}=C(e_{i},B)$
for $i=1,2$.
\end{enumerate}
\end{lem}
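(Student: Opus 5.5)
I will prove the cycle of implications (1)$\Rightarrow$(3)$\Rightarrow$(2)$\Rightarrow$(1), and separately (1)$\Leftrightarrow$(4). Lemma~\ref{lem:one-circ} converts ``includes exactly one circuit'' into ``has nullity $1$''. Throughout, put $U=C_{1}\cup C_{2}$.

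For (1)$\Rightarrow$(3), fix $c\in C_{1}\cap C_{2}$. By circuit elimination, $U\backslash\{c\}$ includes some circuit, so its nullity is at least $1$. Deleting one element lowers the size by $1$ and the rank by at most $1$, so $\mathrm{nullity}(U\backslash\{c\})\le\mathrm{nullity}(U)=2$. Moreover, $c$ lies in the circuit $C_{1}\subseteq U$, so $c\in\mathrm{cl}(U\backslash\{c\})$. Hence $r(U\backslash\{c\})=r(U)$, and the nullity drops by exactly $1$, to $1$. Lemma~\ref{lem:one-circ} then gives exactly one circuit. The implication (3)$\Rightarrow$(2) is trivial because $C_{1}\cap C_{2}\neq\emptyset$. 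For (2)$\Rightarrow$(1), the same closure argument gives $\mathrm{nullity}(U)=\mathrm{nullity}(U\backslash\{c\})+1=2$.

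For (4)$\Rightarrow$(1), note that $U\subseteq B\cup\{e_{1},e_{2}\}$, and $U\backslash\{e_{1},e_{2}\}\subseteq B$ is independent. Thus $r(U)\ge|U|-2$, so $\mathrm{nullity}(U)\le2$. The remark after Lemma~\ref{lem:one-circ} gives $\mathrm{nullity}(U)\ge2$, hence equality.

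For (1)$\Rightarrow$(4), which I expect to be the main step, I need to choose $e_{1},e_{2}$ carefully.
\begin{enumerate}
\item Since $C_{1}\ne C_{2}$ are circuits, neither includes the other. Choose $e_{1}\in C_{1}\backslash C_{2}$ and $e_{2}\in C_{2}\backslash C_{1}$.
\item Set $I=U\backslash\{e_{1},e_{2}\}$. I claim $I$ is independent. Its size is $|U|-2=r(U)$. Also $C_{1}\backslash\{e_{1}\}\subseteq I$ spans $e_{1}$, and $C_{2}\backslash\{e_{2}\}\subseteq I$ spans $e_{2}$. Hence $r(I)=r(U)=|I|$.
\item Extend $I$ to a base $B$ of $M$. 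Any element of $B$ outside $I$ lies outside $U$, since $U\subseteq\mathrm{cl}(I)$, so $e_{1},e_{2}\notin B$.
\item Then $C_{1}\subseteq I\cup\{e_{1}\}\subseteq B\cup\{e_{1}\}$, so $C_{1}=C(e_{1},B)$ by uniqueness of the fundamental circuit. Likewise $C_{2}=C(e_{2},B)$, and $U\subseteq B\cup\{e_{1},e_{2}\}$.
\end{enumerate}

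The only subtle point is choosing $e_{1},e_{2}$ outside the intersection, so that each $C_{i}$ misses the other's chosen element. This choice is what makes each $C_{i}$ fundamental with respect to the same base.
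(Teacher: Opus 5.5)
Your proof is correct. For (1)$\Leftrightarrow$(2)$\Leftrightarrow$(3) you use the same observation as the paper: for $c\in C_{1}\cap C_{2}$, $r\bigl((C_{1}\cup C_{2})\backslash\{c\}\bigr)=r(C_{1}\cup C_{2})$, combined with Lemma~\ref{lem:one-circ}. Your preliminary bounds via circuit elimination are superfluous but harmless. Your (1)$\Rightarrow$(4) is the paper's argument.

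The genuine difference is how you close the loop. The paper proves (4)$\Rightarrow$(2) by applying Lemma~\ref{lem:fund-circ} to $B\cup\{e_{1},e_{2}\}$. It uses $C_{1}\cap C_{2}\neq\emptyset$ to obtain a third circuit $C_{3}$ there and then picks an element $c\in(C_{1}\cup C_{2})\backslash C_{3}$. Such a $c$ must lie in $C_{1}\cap C_{2}$, and $C_{3}$ is then the unique circuit included in $(C_{1}\cup C_{2})\backslash\{c\}$.

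You instead prove (4)$\Rightarrow$(1) by a one-line rank count. Since $(C_{1}\cup C_{2})\backslash\{e_{1},e_{2}\}\subseteq B$, the nullity is at most $2$, and two distinct circuits force it to be at least $2$.

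Your route is shorter and needs no structural lemma. It also does not use $C_{1}\cap C_{2}\neq\emptyset$ for that direction. The paper's route gives extra information: it identifies the unique circuit of $(C_{1}\cup C_{2})\backslash\{c\}$ as another circuit of $B\cup\{e_{1},e_{2}\}$. That picture reappears in Lemma~\ref{lem:sp-two}, although the equivalence itself does not need it.
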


\begin{proof}
Put $D=C_{1}\cup C_{2}$.\textbf{ }For $c\in C_{1}\cap C_{2}$, we
have $r(D\backslash\{c\})=r(D)$ since $c\in C_{1}$, so Lemma \ref{lem:one-circ}
shows that $D\backslash\{c\}$ includes exactly one circuit if and
only if $\mathrm{nullity}(D)=2$. This condition does not involve
$c$, so (\ref{enu:two-circ-1}), (\ref{enu:two-circ-2}), and (\ref{enu:two-circ-3})
are equivalent. 

(\ref{enu:two-circ-1})$\Rightarrow$(\ref{enu:two-circ-4}). Since
$C_{1}$ and $C_{2}$ are distinct circuits, neither includes the
other; choose $e_{1}\in C_{1}\backslash C_{2}$ and $e_{2}\in C_{2}\backslash C_{1}$.
As $e_{2}\notin C_{1}$, we have $C_{1}\backslash\{e_{1}\}\subseteq D\backslash\{e_{1},e_{2}\}$,
so $e_{1}\in\mathrm{cl}(C_{1}\backslash\{e_{1}\})\subseteq\mathrm{cl}(D\backslash\{e_{1},e_{2}\})$;
symmetrically $e_{2}\in\mathrm{cl}(D\backslash\{e_{1},e_{2}\})$.
Hence $r(D\backslash\{e_{1},e_{2}\})=r(D)=\left|D\right|-2$, so $D\backslash\{e_{1},e_{2}\}$
is independent. Extend it to a base $B$ of $M$; since $e_{1},e_{2}\in\mathrm{cl}(D\backslash\{e_{1},e_{2}\})$,
neither lies in $B$. Now $D\subseteq B\cup\{e_{1},e_{2}\}$ and $C_{1}\subseteq B\cup\{e_{1}\}$,
so $C_{1}=C(e_{1},B)$, and symmetrically $C_{2}=C(e_{2},B)$. 

(\ref{enu:two-circ-4})$\Rightarrow$(\ref{enu:two-circ-2}). Let
$C_{3},\dots,C_{m}$ be the circuits included in $\tilde{B}:=B\cup\{e_{1},e_{2}\}$
other than $C_{1}$ and $C_{2}$. Since $C_{1}\cap C_{2}\neq\emptyset$,
Lemma \ref{lem:fund-circ}(\ref{enu:fund3}) gives $m\ge3$. By Lemma
\ref{lem:fund-circ}(\ref{enu:fund2}), the set $\bigl(\bigcup_{i\in[m]}C_{i}\bigr)\backslash C_{3}=D\backslash C_{3}$
is nonempty; choose $c$ in it. By Lemma \ref{lem:fund-circ}(\ref{enu:fund1}),
the element $c$ is contained in all but $C_{3}$; in particular,
$c\in C_{1}\cap C_{2}$, and $C_{3}$ is the unique circuit included
in $D\backslash\{c\}\subseteq\tilde{B}\backslash\{c\}$. 
\end{proof}

\subsection{Gaussian circuit operations}

We formulate a matroid-theoretic analogue of Gaussian elimination
for circuit systems of $M$. Gaussian elimination for a matrix is
carried out by the following three row operations. 
\begin{enumerate}
\item Swap two rows.
\item Multiply a row by a nonzero scalar.
\item Replace a row by the sum of itself and a scalar multiple of another
row.
\end{enumerate}
A circuit system is a collection rather than an ordered list, so swapping
two rows has no effect on it. Scaling a row by a nonzero scalar changes
the row but not its support, and a circuit system records supports
and nothing else. Hence neither operation alters the circuit system,
and the third operation is the only one carrying combinatorial content. 

Suppose that two nonproportional rows are both nonzero at some coordinate.
Adding a suitable multiple of one to the other eliminates that coordinate,
leaving a nonzero vector whose support is included in the set obtained
from the union of the two supports by removing that coordinate. On
the combinatorial side, the circuit elimination axiom asserts that
removing a common element from the union of two distinct circuits
leaves a set that includes a circuit, and we take the passage to such
a circuit as the third operation on circuit systems.
\begin{defn}
Let $\left(C_{1},C_{2}\right)$ be an ordered pair of distinct circuits
of $M$ and let $e\in C_{1}$. If $e\in C_{2}$, let $C_{2}'$ be
a circuit included in $(C_{1}\cup C_{2})\backslash\{e\}$, which exists
by the circuit elimination axiom; otherwise set $C_{2}'=C_{2}$. The
passage from $\left(C_{1},C_{2}\right)$ to $\left(C_{1},C_{2}'\right)$
is a \emph{Gaussian circuit operation} at $e$. 
\end{defn}

The operation is asymmetric in that the element $e$ is chosen from
$C_{1}$ and only $C_{2}$ is replaced. The two circuits remain distinct,
since $e\in C_{1}\backslash C_{2}'$ in either case. The operation
is not invertible in general: $C_{2}'$ need not be uniquely determined,
and even when it is, there need not exist a Gaussian circuit operation
recovering $C_{2}$ from $\left(C_{1},C_{2}'\right)$. We apply such
an operation to a circuit system $\mathcal{A}$ containing $C_{1}$
and $C_{2}$ by replacing $C_{2}$ with $C_{2}'$, and call $C_{1}$
the \emph{pivot} of the operation.\medskip{}

The following proposition shows that any two fundamental circuit systems
are related by a sequence of Gaussian circuit  operations.
\begin{prop}
\label{prop:transform}Let $A$ and $B$ be bases of $M$. Then a
sequence of Gaussian circuit operations transforms $\mathcal{C}_{B}$
into $\mathcal{C}_{A}$.
\end{prop}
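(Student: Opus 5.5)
The plan is to reduce to a single basis exchange and then induct on $|B\backslash A|$. If $B=A$ there is nothing to do. Otherwise pick $a\in A\backslash B$. The fundamental circuit $C(a,B)$ cannot be included in $A$, since $A$ is independent. Hence it contains some $b\in B\backslash A$; note $b\neq a$ because $a\notin B$. Put $B'=(B\backslash\{b\})\cup\{a\}$. This is a base, because $b\in C(a,B)$ (as recalled in Section \ref{sec:Realizations}), and $|B'\backslash A|=|B\backslash A|-1$. Gaussian circuit operations compose, so it suffices to transform $\mathcal{C}_{B}$ into $\mathcal{C}_{B'}$. The induction hypothesis then carries $\mathcal{C}_{B'}$ to $\mathcal{C}_{A}$.

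For the single exchange, I compare the two systems circuit by circuit. The indices are $E(M)\backslash B$ for $\mathcal{C}_{B}$ and $E(M)\backslash B'=(E(M)\backslash(B\cup\{a\}))\cup\{b\}$ for $\mathcal{C}_{B'}$.
\begin{enumerate}
\item First, $C(a,B)=C(b,B')$. Indeed $C(a,B)\subseteq B\cup\{a\}=B'\cup\{b\}$, and the unique circuit in $B'\cup\{b\}$ is $C(b,B')$. So this member is already correct, and it will serve as the fixed pivot $C_{1}:=C(a,B)$.
\item Now let $e\in E(M)\backslash(B\cup\{a\})$. If $b\notin C(e,B)$, then $C(e,B)\subseteq B'\cup\{e\}$, so $C(e,B)=C(e,B')$ and nothing needs to change.
\item If $b\in C(e,B)$, apply the Gaussian circuit operation to the ordered pair $(C_{1},C(e,B))$ at $b$. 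These circuits are distinct because $a$ lies in $C_{1}$ but not in $C(e,B)$, and $b\in C_{1}\cap C(e,B)$. The operation produces a circuit $C_{2}'$ included in
\[
(C(a,B)\cup C(e,B))\backslash\{b\}\subseteq(B\cup\{a,e\})\backslash\{b\}=B'\cup\{e\}.
\]
The unique circuit in $B'\cup\{e\}$ is $C(e,B')$, so $C_{2}'=C(e,B')$. Thus the choice left open in the definition is forced here.
\end{enumerate}

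These operations are performed one at a time, always with pivot $C_{1}$. Each operation alters only the circuit indexed by its $e$, and the pivot itself is never replaced. Therefore the operations do not interfere with one another, and after treating every $e$ the system is exactly $\mathcal{C}_{B'}$.

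One bookkeeping point remains. Since $\mathcal{C}_{B}$ is a set, two indices could a priori give the same circuit, but this does not happen. Each $C(e,B)$ contains $e$, which lies in no other member of $\mathcal{C}_{B}$, so these circuits are pairwise distinct. The same holds for $\mathcal{C}_{B'}$.

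The main subtlety is the single-exchange step. One must check that the pivot and the eliminated element $b$ are chosen so that every eliminated circuit lands inside $B'\cup\{e\}$, which is what pins down $C_{2}'$. Once that is done, the rest is the standard exchange induction.
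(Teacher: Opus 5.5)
Your proposal is correct and follows essentially the same route as the paper: a single exchange $B\to B'=(B\backslash\{b\})\cup\{a\}$ with the fixed pivot $C(a,B)=C(b,B')$, and the same three-way split of $\mathcal{C}_{B}$ into the pivot, the circuits avoiding $b$ (unchanged), and the circuits containing $b$ (where elimination at $b$ is forced to give $C(e,B')$), iterated along a chain of exchanges down to $A$. The only cosmetic difference is the order of choices: you take $a\in A\backslash B$ first and then $b\in C(a,B)\backslash A$, whereas the paper fixes $b\in B\backslash A$ and obtains $a$ from the base exchange axiom.
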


\begin{proof}
We may assume $A\neq B$. Fix $b\in B\backslash A$. By the base exchange
axiom, there exists $a\in A\backslash B$ with $B':=(B\backslash\{b\})\cup\{a\}\in\mathcal{B}(M)$.
Write $E(M)\backslash B=\{e_{1},\ldots,e_{n-k}\}$ and $C_{i}:=C(e_{i},B)$,
so that $\mathcal{C}_{B}=\{C_{1},\ldots,C_{n-k}\}$. Since $a\notin B$,
we have $a\in C_{i}$ if and only if $a=e_{i}$. Each $C_{i}$ falls
into exactly one of the following cases.
\begin{enumerate}
\item \label{enu:transform1}If $b\notin C_{i}$, then $a\notin C_{i}$,
since otherwise $a=e_{i}$ and $C_{i}\subseteq(B\cup\{a\})\backslash\{b\}=B'$,
a contradiction. Then $C_{i}\backslash\{e_{i}\}\subseteq B'$ and
$e_{i}\notin B'$, so $C_{i}=C(e_{i},B')$. Thus $C_{i}=C(e_{i},B)=C(e_{i},B')\in\mathcal{C}_{B}\cap\mathcal{C}_{B'}$.
Let $C_{i}'=C_{i}$.
\item \label{enu:transform2}If $a\in C_{i}$, then $b\in C_{i}$ by the
contrapositive of (\ref{enu:transform1}). Since $C_{i}\backslash\{a\}\subseteq B$
and $C_{i}\backslash\{b\}\subseteq B'$, we have $C_{i}=C(a,B)=C(b,B')\in\mathcal{C}_{B}\cap\mathcal{C}_{B'}$.
Let $C_{i}'=C_{i}$. Here $C_{i}$ is the unique circuit lying in
$\mathcal{C}_{B}\cap\mathcal{C}_{B'}$ that contains both $a$ and
$b$; we denote it by $C^{\star}$.
\item \label{enu:transform3}If $b\in C_{i}$ and $a\notin C_{i}$, then
both $B'=\left(B\backslash\{b\}\right)\cup\{a\}$ and $B''=\left(B\backslash\{b\}\right)\cup\{e_{i}\}$
are bases, and $C(e_{i},B')=C(a,B'')=:C_{i}'$. This circuit can also
be obtained as follows. Applying circuit elimination to $C^{\star}$
and $C_{i}$ with $b\in C^{\star}\cap C_{i}$, we obtain a circuit
$C'$ included in $(C^{\star}\cup C_{i})\backslash\{b\}\subseteq B'\cup\{e_{i}\}$,
so $C'=C_{i}'$. Conversely, $(C^{\star}\cup C'_{i})\backslash\{a\}\subseteq B\cup\{e_{i}\}$,
whose unique circuit is $C_{i}=C(e_{i},B)$. Thus Gaussian circuit
operations take $(C^{\star},C_{i})$ to $(C^{\star},C_{i}')$ and
back.
\end{enumerate}
By (\ref{enu:transform1})–(\ref{enu:transform3}), to each $C_{i}=C(e_{i},B)\in\mathcal{C}_{B}$
there corresponds a circuit $C_{i}'=C(e_{i}',B')\in\mathcal{C}_{B'}$
with $e_{i}'=e_{i}$ for $e_{i}\neq a$, and $e_{i}'=b$ for $e_{i}=a$.
Note that $b\in C_{i}$ if and only if $a\in C_{i}'$. Replacing each
$C_{i}$ by $C_{i}'$ yields the fundamental circuit system $\mathcal{C}_{B'}$.

Write $B\backslash A=\left\{ b_{1},\dots,b_{t}\right\} $ and $A\backslash B=\left\{ a_{1},\dots,a_{t}\right\} $,
where $t=k-\left|A\cap B\right|>0$. Let $B_{0}=B$. Applying base
exchange at each step, we obtain a permutation $\tau\in\mathfrak{S}_{t}$
such that $B_{l}=\left(B_{l-1}\backslash\{b_{l}\}\right)\cup\{a_{\tau(l)}\}\in\mathcal{B}(M)$
for each $l\in[t]$ and $B_{t}=A$. We can transform $\mathcal{C}_{B_{l-1}}$
into $\mathcal{C}_{B_{l}}$ by the Gaussian circuit operations described
above, applied with $B_{l-1}$ and $B_{l}$ in place of $B$ and $B'$.
Iterating this for $l=1,\dots,t$ thus transforms $\mathcal{C}_{B}$
into $\mathcal{C}_{A}$ as follows: 
\[
C_{i}=C(e_{i},B)\rightsquigarrow\begin{cases}
C(e_{i},A) & \text{if }e_{i}\notin A\backslash B,\\
C(\epsilon(e_{i}),A) & \text{if }e_{i}\in A\backslash B.
\end{cases}
\]
 Here $\epsilon:A\backslash B\rightarrow B\backslash A$ is the bijection
defined by $\epsilon(a_{\tau(l)})=b_{l}$.
\end{proof}

\subsection{Gaussian operations on dependent sets}

Since the axiom \ref{enu:depen2} is the dependent-set version of
circuit elimination, Gaussian circuit operations extend naturally
to dependent sets: for an ordered pair $(D_{1},D_{2})$ of dependent
sets and $e\in D_{1}\cap D_{2}$, the operation should replace $D_{2}$
by a dependent set included in $(D_{1}\cup D_{2})\backslash\{e\}$.
Unlike the circuit elimination axiom, \ref{enu:depen2} does not guarantee
that such a set exists, so we first determine when one does.
\begin{lem}
\label{lem:elim-ex}Let $D_{1}$ and $D_{2}$ be dependent sets. The
following are equivalent.
\begin{enumerate}
\item \label{enu:elim-ex-1}$(D_{1}\cup D_{2})\backslash\{e\}$ is independent
for some $e\in D_{1}\cap D_{2}$.
\item \label{enu:elim-ex-2}$D_{1}\cup D_{2}$ includes exactly one circuit.
\end{enumerate}
In this case, the unique circuit is included in $D_{1}\cap D_{2}$.
\end{lem}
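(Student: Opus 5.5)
Write $D=D_{1}\cup D_{2}$. The plan is to reduce both conditions to statements about $\mathrm{nullity}(D)$ and then invoke Lemma \ref{lem:one-circ}. Along the way we use one elementary fact: if $C$ is the unique circuit included in a set $X$, then the elements of $X$ whose removal destroys dependence are exactly the elements of $C$.

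\emph{(\ref{enu:elim-ex-1})$\Rightarrow$(\ref{enu:elim-ex-2}).} Suppose $e\in D_{1}\cap D_{2}$ and $D\backslash\{e\}$ is independent. Then $r(D)\ge|D|-1$. Since $D\supseteq D_{1}$ is dependent, $r(D)\le|D|-1$. Hence $\mathrm{nullity}(D)=1$, and Lemma \ref{lem:one-circ} shows that $D$ includes exactly one circuit.

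\emph{(\ref{enu:elim-ex-2})$\Rightarrow$(\ref{enu:elim-ex-1}) and the final claim.} Let $C$ be the unique circuit included in $D$. Each $D_{i}$ is dependent, so it includes some circuit, and that circuit is included in $D$; hence it equals $C$. Thus $C\subseteq D_{1}\cap D_{2}$, which is the last assertion, and in particular $D_{1}\cap D_{2}\neq\emptyset$. Now pick any $e\in C\subseteq D_{1}\cap D_{2}$. Every circuit included in $D\backslash\{e\}$ is a circuit included in $D$, hence equals $C$; but $e\notin D\backslash\{e\}$ while $e\in C$, so $D\backslash\{e\}$ includes no circuit and is independent. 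This gives (\ref{enu:elim-ex-1}).

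The final clause ("the unique circuit is included in $D_{1}\cap D_{2}$") is a consequence of (\ref{enu:elim-ex-2}) alone. So once the equivalence is established it holds in either case, and no separate argument is needed under (\ref{enu:elim-ex-1}).

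There is no real obstacle. The only point needing care is the observation that a dependent $D_{i}$ must contain the unique circuit of $D$, since that circuit is the only one available. This single observation supplies both the element $e$ in the intersection and the inclusion $C\subseteq D_{1}\cap D_{2}$.
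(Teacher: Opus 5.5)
Your proof is correct and essentially the paper's: the direction (\ref{enu:elim-ex-2})$\Rightarrow$(\ref{enu:elim-ex-1}) and the final clause are argued exactly as there. For (\ref{enu:elim-ex-1})$\Rightarrow$(\ref{enu:elim-ex-2}) you compute $\mathrm{nullity}(D_{1}\cup D_{2})=1$ and cite Lemma \ref{lem:one-circ}, while the paper extends $(D_{1}\cup D_{2})\backslash\{e\}$ to a base $B$ and uses the uniqueness of $C(e,B)$; this is the same fundamental-circuit argument that proves Lemma \ref{lem:one-circ}, so the two differ only in packaging.
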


\begin{proof}
To prove (\ref{enu:elim-ex-2})$\Rightarrow$(\ref{enu:elim-ex-1}),
let $C$ be the unique circuit included in $D_{1}\cup D_{2}$. Since
both $D_{1}$ and $D_{2}$ are dependent, each includes a circuit,
and by uniqueness that circuit is $C$; hence $C\subseteq D_{1}\cap D_{2}$.
Fix $e\in C$; then $e\in D_{1}\cap D_{2}$. If $(D_{1}\cup D_{2})\backslash\{e\}$
were dependent, it would include a circuit distinct from $C$, a contradiction.
Thus $(D_{1}\cup D_{2})\backslash\{e\}$ is independent. 

Conversely, suppose that $(D_{1}\cup D_{2})\backslash\{e\}$ is independent
for some $e\in D_{1}\cap D_{2}$. Let $B$ be a base of $M$ including
$(D_{1}\cup D_{2})\backslash\{e\}$. Then $D_{1}\cup D_{2}\subseteq B\cup\{e\}$,
and $e\notin B$ since $D_{1}\cup D_{2}$ is dependent. If $C$ is
a circuit included in $D_{1}\cup D_{2}$, then $C\subseteq B\cup\{e\}$,
so $C=C(e,B)$. Therefore $D_{1}\cup D_{2}$ includes exactly one
circuit.
\end{proof}

\begin{cor}
\label{cor:Gaussian}Let $D_{1}$ and $D_{2}$ be dependent sets.
The following are equivalent.
\begin{enumerate}
\item $(D_{1}\cup D_{2})\backslash\{e\}$ is dependent for all $e\in D_{1}\cap D_{2}$.
\item $D_{1}\cup D_{2}$ includes at least two circuits.
\end{enumerate}
\end{cor}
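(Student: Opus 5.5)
The plan is to derive the corollary by negating both sides of Lemma~\ref{lem:elim-ex}; the only real work is checking that the dichotomy ``exactly one circuit'' versus ``at least two circuits'' is exhaustive.

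\emph{Step 1.} Negating condition (\ref{enu:elim-ex-1}) of Lemma~\ref{lem:elim-ex} gives exactly statement~(1) of the corollary: $(D_{1}\cup D_{2})\backslash\{e\}$ is dependent for every $e\in D_{1}\cap D_{2}$.

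\emph{Step 2.} Negating condition (\ref{enu:elim-ex-2}) gives that $D_{1}\cup D_{2}$ does not include exactly one circuit. Since $D_{1}$ is dependent, it includes a circuit of $M$, and so does the superset $D_{1}\cup D_{2}$. Hence $D_{1}\cup D_{2}$ includes at least one circuit. Therefore ``not exactly one'' is equivalent to ``at least two,'' which is statement~(2).

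\emph{Step 3.} Lemma~\ref{lem:elim-ex} asserts (\ref{enu:elim-ex-1})$\Leftrightarrow$(\ref{enu:elim-ex-2}), so their negations are also equivalent. By Steps~1 and~2 these negations are statements~(1) and~(2) of the corollary, which proves the claim.

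There is no genuine obstacle. The only point needing care is the existence of at least one circuit in $D_{1}\cup D_{2}$, which comes from the dependence of $D_{1}$.
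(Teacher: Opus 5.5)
Your proposal is correct and is the paper's own argument: negate both conditions of Lemma~\ref{lem:elim-ex} and use that $D_{1}\cup D_{2}$ includes at least one circuit, since it includes the dependent set $D_{1}$.
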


\begin{proof}
Negate both statements in Lemma \ref{lem:elim-ex}, noting that $D_{1}\cup D_{2}$
includes at least one circuit.
\end{proof}

\begin{defn}
Let $\left(D_{1},D_{2}\right)$ be an ordered pair of dependent sets
of $M$ such that $D_{1}\cup D_{2}$ includes at least two circuits,
and let $e\in D_{1}$. If $e\in D_{2}$, let $D_{2}'$ be a dependent
set included in $\left(D_{1}\cup D_{2}\right)\backslash\{e\}$, which
exists by Corollary \ref{cor:Gaussian}; otherwise set $D_{2}'=D_{2}$.
The passage from $\left(D_{1},D_{2}\right)$ to $\left(D_{1},D_{2}'\right)$
is a\emph{ Gaussian operation} at $e$. 
\end{defn}

When $D_{1}$ and $D_{2}$ are circuits, the condition that $D_{1}\cup D_{2}$
includes at least two circuits is equivalent to $D_{1}\neq D_{2}$,
so Gaussian circuit operations are Gaussian operations.
\begin{defn}
For a matroid $N$ and $D\subseteq E(N)$, let $D_{\mathrm{lo}}$
and $D_{\mathrm{co}}$ be the sets of loops and coloops of $N|D$,
respectively. We write $\left\lfloor D\right\rfloor _{N}:=D\backslash(D_{\mathrm{lo}}\cup D_{\mathrm{co}})$.
\end{defn}

Since $M$ is loopless, $\left\lfloor D\right\rfloor $ is the set
of elements of $D$ that are contained in some circuit of $M$ included
in $D$, that is, the union of the circuits of $M$ included in $D$.
Consequently, $D$ includes exactly one circuit if and only if $\left\lfloor D\right\rfloor $
is a circuit.

\subsection{Gaussian span}

As motivation, we first consider the realizable case, where a linear
combination of circuit vectors is reduced to a single circuit vector
by successive eliminations. Let $M$ be realized by a matrix $L$
as in Section \ref{sec:Realizations}, with row space $V$. Let $C_{1},\dots,C_{m}$
and $C$ be distinct circuits of $M$ with circuit vectors $\mathbf{v}_{1},\dots,\mathbf{v}_{m}$
and $\mathbf{v}$, respectively, and suppose that $\mathbf{v}=\sum_{i\in[m]}a_{i}\mathbf{v}_{i}$.
Throughout the following process, $\mathbf{x}_{1},\dots,\mathbf{x}_{m}$
are vectors in $V^{\perp}$, initially $\mathbf{x}_{i}=a_{i}\mathbf{v}_{i}$.
Let $\Lambda_{1}:=\left\{ i\in[m]:\mathbf{x}_{i}\neq\mathbf{0}\right\} $.
Then 
\[
\mathbf{v}=\sum_{i\in\Lambda_{1}}\mathbf{x}_{i}.
\]

\noindent\textbf{Step $j$.} If $\mathrm{supp}(\mathbf{x}_{i})\subseteq C$
for every $i\in\Lambda_{j}$, the process stops. Otherwise choose
an index $\lambda_{j}\in\Lambda_{j}$ with $\mathrm{supp}(\mathbf{x}_{\lambda_{j}})\nsubseteq C$
and an element $c_{j}\in\mathrm{supp}(\mathbf{x}_{\lambda_{j}})\backslash C$.
Then $0=\mathbf{v}(c_{j})=\sum_{i\in\Lambda_{j}}\mathbf{x}_{i}(c_{j})$,
so $1=\sum_{i\in\Lambda_{j}\backslash\{\lambda_{j}\}}\frac{-\mathbf{x}_{i}(c_{j})}{\mathbf{x}_{\lambda_{j}}(c_{j})}$
and 
\begin{align*}
\mathbf{v}=\sum_{i\in\Lambda_{j}}\mathbf{x}_{i} & =\sum_{i\in\Lambda_{j}\backslash\{\lambda_{j}\}}\mathbf{x}_{i}+1\cdot\mathbf{x}_{\lambda_{j}}\\
 & =\sum_{i\in\Lambda_{j}\backslash\{\lambda_{j}\}}\mathbf{x}_{i}+\sum_{i\in\Lambda_{j}\backslash\{\lambda_{j}\}}\frac{-\mathbf{x}_{i}(c_{j})}{\mathbf{x}_{\lambda_{j}}(c_{j})}\mathbf{x}_{\lambda_{j}}\\
 & =\sum_{i\in\Lambda_{j}\backslash\{\lambda_{j}\}}\left(\mathbf{x}_{i}-\frac{\mathbf{x}_{i}(c_{j})}{\mathbf{x}_{\lambda_{j}}(c_{j})}\mathbf{x}_{\lambda_{j}}\right).
\end{align*}
 Set $\mathbf{x}_{i}:=\mathbf{x}_{i}-\frac{\mathbf{x}_{i}(c_{j})}{\mathbf{x}_{\lambda_{j}}(c_{j})}\mathbf{x}_{\lambda_{j}}$
for all $i\in\Lambda_{j}\backslash\{\lambda_{j}\}$, and $\Lambda_{j+1}:=\{i\in\Lambda_{j}\backslash\{\lambda_{j}\}:\mathbf{x}_{i}\neq\mathbf{0}\}$.
Then $\mathbf{x}_{i}(c_{j})=0$ for all $i\in\Lambda_{j+1}$, and
\[
\mathbf{v}=\sum_{i\in\Lambda_{j+1}}\mathbf{x}_{i}.
\]

Since each step removes at least one index from $\Lambda_{j}$ and
$\Lambda_{j}$ remains nonempty because $\mathbf{v}\neq\mathbf{0}$,
the process stops at some step $j\le m$. At termination, $\mathrm{supp}(\mathbf{x}_{i})\subseteq C$
for every $i\in\Lambda_{j}$. Each $\mathrm{supp}(\mathbf{x}_{i})$
is dependent, so $\mathrm{supp}(\mathbf{x}_{i})=C$ by minimality
and $\mathbf{x}_{i}$ is a circuit vector of $C$. \vspace{6pt}

We now abstract this process to arbitrary matroids. Let $\mathcal{A}=\left\{ C_{1},\dots,C_{m}\right\} $
be a circuit system of $M$ and $C$ a circuit. The \emph{Gaussian
process} for $\left(\mathcal{A},C\right)$ is defined as follows.
Throughout the process, $X_{1},\dots,X_{m}$ are dependent sets, initially
$X_{i}=C_{i}$. Let $\Lambda_{1}:=[m]$.\vspace{6pt}

\noindent\textbf{Step $j$.} If $\left|\Lambda_{j}\right|\le1$ or
$C=X_{i}$ for some $i\in\Lambda_{j}$, the process stops. Otherwise
choose an index $\lambda_{j}\in\Lambda_{j}$ and an element $c_{j}\in X_{\lambda_{j}}\backslash C$;
the set $X_{\lambda_{j}}$ is called the \emph{pivot} of step $j$.
For each $i\in\Lambda_{j}\backslash\{\lambda_{j}\}$, apply to $(X_{\lambda_{j}},X_{i})$
the Gaussian operation at $c_{j}$ given by 
\[
X_{i}:=\begin{cases}
\left\lfloor \left(X_{i}\cup X_{\lambda_{j}}\right)\backslash\{c_{j}\}\right\rfloor  & \text{if }c_{j}\in X_{i},\\
X_{i} & \text{otherwise}.
\end{cases}
\]
 Then let $\Lambda_{j+1}$ consist of one index from each group of
indices $i\in\Lambda_{j}\backslash\{\lambda_{j}\}$ with the same
$X_{i}$. For a group with at least two indices, replacing the group
by its retained index is called a \emph{merge}, and that index is
called the \emph{representative} of the group. Since the $X_{i}$
are unions of circuits and those with $i\in\Lambda_{j+1}$ are distinct,
the next step is well defined. \vspace{2pt}

\noindent\textbf{Termination.} If $\mathcal{A}=\emptyset$, the process
stops at the first step. Otherwise, since each step removes at least
one index from $\Lambda_{j}$, it stops at some step $j\le m$.\vspace{2pt}

\noindent\textbf{Success.} The process succeeds if $C=X_{i}$ for
some $i\in\Lambda_{j}$ at termination. \vspace{6pt}

An \emph{execution} of the process is the sequence of choices made
at each step: the index $\lambda_{j}$, the element $c_{j}$, and,
whenever indices are merged, their representatives.
\begin{defn}
A circuit $C$ is a \emph{Gaussian combination} of $\mathcal{A}$
if some execution of the Gaussian process for $\left(\mathcal{A},C\right)$
succeeds.
\end{defn}

\begin{lem}
\label{lem:gc-union}If $C$ is a Gaussian combination of $\mathcal{A}$,
then $C\subseteq\bigcup\mathcal{A}$.
\end{lem}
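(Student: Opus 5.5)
The plan is an invariant argument along any successful execution of the Gaussian process for $(\mathcal{A},C)$. Write $U:=\bigcup\mathcal{A}$. The invariant to maintain is
\[
X_{i}\subseteq U\quad\text{for every index }i\in\Lambda_{j}\text{ at every step }j.
\]
Once this holds throughout, success gives $C=X_{i}$ for some surviving index $i$, and hence $C\subseteq U$.

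First I treat the trivial case. If $\mathcal{A}=\emptyset$, the process stops at the first step with $\Lambda_{1}=\emptyset$. Then no index $i$ satisfies $C=X_{i}$, so the process cannot succeed, and the hypothesis excludes this case. So I may assume $m\ge1$.

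Next the induction on $j$. Initially $X_{i}=C_{i}\subseteq U$. At step $j$, each update either leaves $X_{i}$ unchanged or replaces it by
\[
\left\lfloor (X_{i}\cup X_{\lambda_{j}})\backslash\{c_{j}\}\right\rfloor .
\]
By the definition of $\lfloor\cdot\rfloor$, this set is included in $(X_{i}\cup X_{\lambda_{j}})\backslash\{c_{j}\}$, which is included in $X_{i}\cup X_{\lambda_{j}}$. By the inductive hypothesis, that union is included in $U$. Merges only discard indices, so they do not affect the invariant.

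Finally, at termination success means $C=X_{i}$ for some $i\in\Lambda_{j}$, and the invariant gives $C\subseteq U$.

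There is no real obstacle here. The only care needed is to use the inclusion $\lfloor D\rfloor\subseteq D$ and to check that the updated sets are formed only from sets already in play, i.e.\ from current $X$'s, so they never introduce elements outside $U$. Well-definedness of the updates, meaning that the updated sets are again unions of circuits, is not needed for this inclusion.
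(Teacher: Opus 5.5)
Your proof is correct and is the paper's argument: the invariant $X_{i}\subseteq\bigcup\mathcal{A}$ is preserved because each update produces a subset of $X_{i}\cup X_{\lambda_{j}}$ and merges change no $X_{i}$, so success gives $C\subseteq\bigcup\mathcal{A}$. Your separate treatment of $\mathcal{A}=\emptyset$ is harmless extra care, since in that case no execution can succeed and the statement holds vacuously.
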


\begin{proof}
Initially $X_{i}=C_{i}\subseteq\bigcup\mathcal{A}$ for every $i$.
Step $j$ replaces each $X_{i}$ by a subset of $X_{i}\cup X_{\lambda_{j}}$
or leaves it unchanged, and the passage from $\Lambda_{j}$ to $\Lambda_{j+1}$
changes no $X_{i}$. Hence $X_{i}\subseteq\bigcup\mathcal{A}$ for
every $i$ throughout the process. Since the process succeeds, $C=X_{i}$
for some $i\in\Lambda_{j}$ at termination, so $C\subseteq\bigcup\mathcal{A}$.
\end{proof}

\begin{prop}
\label{prop:span}For a base $B$ of $M$, any circuit of $M$ is
a Gaussian combination of the fundamental circuit system $\mathcal{C}_{B}$. 
\end{prop}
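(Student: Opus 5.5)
The plan is to exhibit an explicit successful execution of the Gaussian process for $(\mathcal{C}_{B},C)$. Write $E(M)\backslash B=\{e_{1},\dots,e_{n-k}\}$ and $X_{i}=C(e_{i},B)$ initially. If $C\in\mathcal{C}_{B}$, the process stops at step $1$ and succeeds, so assume otherwise. Put $F:=C\backslash B$; then $|F|\ge2$, since $|F|=1$ would force $C=C(f,B)$.

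\textbf{Phase 1.} First discard the fundamental circuits $C(e,B)$ with $e\notin C\cup B$. For such an index, take the pivot $X_{\lambda}=C(e,B)$ and $c_{j}=e\in X_{\lambda}\backslash C$. By Proposition~\ref{prop:echelon}, $e$ lies in no other $C(e',B)$, so every other $X_{i}$ is left unchanged, no merge occurs, and the pivot simply leaves $\Lambda$. After these steps, $\Lambda$ indexes exactly the circuits $C(f,B)$ with $f\in F$. There are at least two of them, so the process does not stop, and $C$ is not among them.

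\textbf{Phase 2.} Now eliminate the elements of $B\backslash C$ one at a time. At each step pick some $b\in B\backslash C$ that still lies in some current $X_{\lambda}$, use that $X_{\lambda}$ as pivot with $c_{j}=b$, and replace every $X_{i}\ni b$ by $\lfloor(X_{i}\cup X_{\lambda})\backslash\{b\}\rfloor$. An eliminated $b$ never reappears, since the new sets are subsets of $(X_{i}\cup X_{\lambda})\backslash\{b\}$ and $X_{i}$ not containing $b$ are untouched. Hence, as long as the process runs, the union of the current $X_{i}$ loses elements of $B\backslash C$ and eventually lies in $C$. At that point each surviving $X_{i}$ is a nonempty union of circuits included in the circuit $C$, hence $X_{i}=C$, and the process has succeeded.

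\textbf{Main obstacle.} The difficulty is to exclude premature termination. The process could stop with $\Lambda$ reduced to a single $X\neq C$, or with every $X_{i}$ emptied, since $\lfloor\cdot\rfloor$ may return $\emptyset$ when the union becomes independent. I would carry the following invariant, modelled on the linear computation preceding the definition: with $P$ the set of elements of $B\backslash C$ eliminated so far, every current $X_{i}$ is a union of circuits included in $(B\backslash P)\cup F$, and $C\subseteq\bigcup_{i\in\Lambda}X_{i}$. I would verify that one step preserves it by analysing $M|\bigl((B\backslash P)\cup F\bigr)$, where every circuit meets $F$. I expect to need Lemma~\ref{lem:elim-ex} and Lemma~\ref{lem:two-circ} to show that $(X_{i}\cup X_{\lambda})\backslash\{b\}$ remains dependent, with enough circuits to still cover the relevant part of $C$; by Corollary~\ref{cor:Gaussian} this dependence is guaranteed once $X_{i}\cup X_{\lambda}$ includes at least two circuits. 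If a single $X$ remains, the invariant gives $C\subseteq X$. The case $X\neq C$ should be handled by continuing to eliminate. If that fails, I would instead order the pivots so that one index is kept, namely one whose $X_{i}$ always includes $C$, and all others are absorbed into it. This is the step most likely to need genuine care.
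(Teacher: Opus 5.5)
Your Phase~1 is correct, but Phase~2 does not establish success. The fix you sketch for premature termination also does not work. Every step discards its pivot index, so after Phase~1 at most $|F|-1$ further steps can occur before $|\Lambda_j|\le 1$ stops the process. Your mechanism is to eliminate elements of $B\backslash C$ until the union of the $X_i$ lies in $C$. That needs one step per element of $B\backslash C$ in the union. Yet $|B\backslash C|=k-|C|+|F|$, which exceeds $|F|-1$ whenever $C$ is not spanning. For example, let $B=\{1,2,3\}$ and let $C=\{4,5,6\}$ be three collinear points of $\mathbb{P}^{2}$, none on a line through two points of $B$. Then $C(f,B)=B\cup\{f\}$ for each $f\in F$, and all three elements of $B\backslash C$ lie in the union, but only two steps are available. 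So in general the process stops with some element of $B\backslash C$ never eliminated. Your fallback of ``continuing to eliminate'' when a single $X\neq C$ remains is unavailable, because the process has already stopped. Your invariant does not close the gap either: a union of circuits included in $(B\backslash P)\cup F$ and including $C$ need not equal $C$.

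The missing idea is that pivoting in a fundamental circuit system is a base exchange, as in case~(\ref{enu:transform3}) of the proof of Proposition~\ref{prop:transform}. Suppose the current sets are $X_i=C(i,B_j)$ for $i\in\Lambda_j$, where $B_j$ is a base disjoint from $\Lambda_j$, and you pivot on $X_\lambda$ at $b\in X_\lambda\cap B_j$.
\begin{itemize}
\item $B_{j+1}=(B_j\backslash\{b\})\cup\{\lambda\}$ is a base.
\item For $i\neq\lambda$ with $b\in X_i$, the set $(X_i\cup X_\lambda)\backslash\{b\}$ is dependent by circuit elimination. It lies in $B_{j+1}\cup\{i\}$, which includes exactly one circuit, so $\lfloor(X_i\cup X_\lambda)\backslash\{b\}\rfloor=C(i,B_{j+1})$.
\item Every other $X_i$ already equals $C(i,B_{j+1})$.
\end{itemize}
Hence the $X_i$ remain distinct single circuits, no merge occurs, and $\lfloor\cdot\rfloor$ never returns $\emptyset$. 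After Phase~1 we have $C\subseteq B\cup\Lambda$, since $\Lambda=F$. Because $b\notin C$, each step gives $B_{j+1}\cup\Lambda_{j+1}=(B_j\cup\Lambda_j)\backslash\{b\}$, so $C\subseteq B_j\cup\Lambda_j$ persists. Unless the process stops earlier with some $X_i=C$, a single index $i$ eventually remains. Then $C\subseteq B_j\cup\{i\}$ forces $X_i=C(i,B_j)=C$.

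With this observation your Phase~2 rule succeeds for every choice of $b$ and pivot. The paper instead fixes the exchanges in advance. It picks $e\in C\backslash B$ and a base $A\supseteq C\backslash\{e\}$ with $A\backslash(C\backslash\{e\})\subseteq B$, so that $C=C(e,A)$. It then follows the exchange sequence of Proposition~\ref{prop:transform} from $\mathcal{C}_B$ to $\mathcal{C}_A$. The index $e$ is never a pivot and ends with $X_e=C$.
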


\begin{proof}
Let $C$ be a circuit of $M$ and choose $e\in C\backslash B$. Since
$C\backslash\{e\}$ is independent, it extends to a base $A$ with
$A\backslash(C\backslash\{e\})\subseteq B$. Then $e\notin A\cup B$
and $C\subseteq A\cup\{e\}$, so $C=C(e,A)\in\mathcal{C}_{A}$. Index
$\mathcal{C}_{B}$ by $E(M)\backslash B$ and run the Gaussian process
for $\left(\mathcal{C}_{B},C\right)$, following the transformation
of Proposition \ref{prop:transform} from $\mathcal{C}_{B}$ to $\mathcal{C}_{A}$.

In the notation of that proof, the passage described in (\ref{enu:transform3})
is effected by the Gaussian operation taking $C_{i}$ to $\left\lfloor (C^{\star}\cup C_{i})\backslash\{b\}\right\rfloor =C(e_{i},B')$,
so a step of the process with pivot $C^{\star}$ and $c_{j}:=b$ replaces
each remaining $X_{i}$ by the circuit assigned to $C_{i}$ in (\ref{enu:transform1})
or (\ref{enu:transform3}). At step $l$, take $\lambda_{l}$ to be
the index of the pivot of the passage from $\mathcal{C}_{B_{l-1}}$
to $\mathcal{C}_{B_{l}}$ and set $c_{l}:=b_{l}$, which lies in $X_{\lambda_{l}}$;
moreover $b_{l}\notin C$, since $b_{l}\notin A\supseteq C\backslash\{e\}$
and $b_{l}\neq e$. The index of the pivot is $a_{\tau(l)}\in A\backslash B$
and is discarded at the same step, while the index correspondence
fixes every other index. Hence, after step $l$, the remaining indices
are those in $\left(E(M)\backslash B\right)\backslash\{a_{\tau(1)},\dots,a_{\tau(l)}\}$,
each $X_{i}$ equals $C(i,B_{l})$, and these circuits are distinct,
so no merge occurs. 

As $e\notin A$, the index $e$ is never the index of a pivot. For
$l\le t$, the indices $e$ and $a_{\tau(l)}$ are distinct, and both
lie in $\Lambda_{l}$, so $\left|\Lambda_{l}\right|\ge2$. Hence,
if the process stops before step $t+1$, it does so with $C=X_{i}$
for some $i$. Otherwise $e\in\Lambda_{t+1}$ and $X_{e}=C(e,A)=C$,
so the process stops at step $t+1$. In either case it succeeds.
\end{proof}

\begin{defn}
Let $\mathcal{A}$ be a circuit system of $M$. Write $\mathrm{sp}(\mathcal{A})$
for the collection of Gaussian combinations of $\mathcal{A}$, and
define $\mathrm{sp}^{i}(\mathcal{A})$ recursively by $\mathrm{sp}^{0}(\mathcal{A})=\mathcal{A}$
and $\mathrm{sp}^{i}(\mathcal{A})=\mathrm{sp}(\mathrm{sp}^{i-1}(\mathcal{A}))$
for $i\ge1$. The \emph{Gaussian span} of $\mathcal{A}$ is 
\[
\mathrm{sp}^{\infty}(\mathcal{A})=\bigcup\nolimits_{i\ge0}\mathrm{sp}^{i}(\mathcal{A}).
\]
\end{defn}

\begin{prop}
\label{prop:sp-basic}Let $\mathcal{A}$ and $\mathcal{A}'$ be circuit
systems of $M$.
\begin{enumerate}
\item \label{enu:sp-basic1}$\mathcal{A}\subseteq\mathrm{sp}(\mathcal{A})$.
\item \label{enu:sp-basic2}If $\mathcal{A}\subseteq\mathcal{A}'$, then
$\mathrm{sp}(\mathcal{A})\subseteq\mathrm{sp}(\mathcal{A}')$.
\end{enumerate}
\end{prop}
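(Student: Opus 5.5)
For (1), fix $C\in\mathcal{A}$ and run the Gaussian process for $(\mathcal{A},C)$ with $X_i=C_i$ and $\Lambda_1=[m]$. Some index $i$ has $C_i=C$, so the stopping condition ``$C=X_i$ for some $i\in\Lambda_1$'' holds at step $1$. The process therefore stops at step $1$ and succeeds, with no choices needed. Hence $C$ is a Gaussian combination of $\mathcal{A}$, which gives $\mathcal{A}\subseteq\mathrm{sp}(\mathcal{A})$.

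For (2), let $C\in\mathrm{sp}(\mathcal{A})$, write $\mathcal{A}=\{C_1,\dots,C_m\}$ and $\mathcal{A}'=\mathcal{A}\cup\{C_{m+1},\dots,C_{m'}\}$, and fix a successful execution for $(\mathcal{A},C)$. If $C\in\mathcal{A}'$ we are done by (1), so assume $C\notin\mathcal{A}'$. I would simulate the given execution inside the process for $(\mathcal{A}',C)$ in a way that keeps the extra sets harmless. The natural idea is to run the same steps with the same pivots $\lambda_j$, elements $c_j$ and representatives, and to let the extra indices $i>m$ simply be carried along. At each step the extra $X_i$ are also updated, but they are never chosen as pivots, and whenever an extra $X_i$ coincides with some $X_{i'}$ with $i'\le m$, we choose the index $i'$ as the representative of the group.

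The main obstacle is that the extra sets change the process in two ways. First, they can be merged with original indices; choosing original representatives handles this. Second, and more seriously, they can alter the stopping condition: termination by $|\Lambda_j|\le1$ is never triggered earlier, but the process could stop early because $C=X_i$ for an extra index. That case is harmless, since it is still a success. The remaining worry is that the original execution might end with $|\Lambda_j|\le1$ only after reaching $C$, or that at the final step the original index holding $C$ is still present. Because the updates of the original indices depend only on the original pivots and are computed exactly as before, and because merges only ever keep original representatives, the trajectory of the original $X_i$, $i\le m$, coincides step by step with the given execution. So at the step where the original execution found $C=X_i$, either the enlarged process has already stopped successfully or it stops there with the same $X_i=C$.

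I would check carefully that the well-definedness remark in the definition of the process (distinct $X_i$ for $i\in\Lambda_{j+1}$) still holds with the extra indices, and that the element $c_j\in X_{\lambda_j}\setminus C$ remains a legal choice. Both follow because the original sets are unchanged. This gives $C\in\mathrm{sp}(\mathcal{A}')$.
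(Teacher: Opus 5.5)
Your proof is correct and follows essentially the same route as the paper. Part (1) uses the stopping condition at the first step. Part (2) reruns the successful execution on $\mathcal{A}'$ with the same $\lambda_j$ and $c_j$, always keeping as representative the index of $\mathcal{A}$ that survives in the given execution, so the original $X_i$ evolve identically and the enlarged process can only stop successfully, at or before the original stopping step.

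The invariant doing the work, which you might state explicitly, is that the set of surviving indices of $\mathcal{A}$ is the same in both processes at every step. This guarantees that $\lambda_j$ is still available when it is chosen, and that $\left|\Lambda_j\right|\ge 2$ in the enlarged process whenever the original execution has not yet stopped.
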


\begin{proof}
For (\ref{enu:sp-basic1}), let $C\in\mathcal{A}$. Then $C=X_{i}$
for some $i\in\Lambda_{1}$, so the Gaussian process for $(\mathcal{A},C)$
stops at the first step and succeeds.

For (\ref{enu:sp-basic2}), let $C$ be a Gaussian combination of
$\mathcal{A}$ and consider a successful execution of the Gaussian
process for $\left(\mathcal{A},C\right)$. Index $\mathcal{A}'$ by
extending the given indexing of $\mathcal{A}$, and run the Gaussian
process for $(\mathcal{A}',C)$ with the same choices of $\lambda_{j}$
and $c_{j}$. Whenever a group being merged contains an index of $\mathcal{A}$,
choose as representative the index of $\mathcal{A}$ that survives
this step in the execution for $(\mathcal{A},C)$.

Then every index of $\mathcal{A}$ that survives a step of the execution
for $\left(\mathcal{A},C\right)$ also survives it here, so each $\lambda_{j}$
still lies in $\Lambda_{j}$ when it is chosen. Hence, at each step
where the execution for $(\mathcal{A},C)$ does not stop, $\Lambda_{j}$
here contains at least two indices, so the process for $(\mathcal{A}',C)$
can stop at such a step only with $C=X_{i}$ for some $i$. The update
of each $X_{i}$ depends only on $X_{i}$ and $X_{\lambda_{j}}$,
so each $X_{i}$ with $C_{i}\in\mathcal{A}$ has the same value at
every step as before. Therefore the process for $(\mathcal{A}',C)$
succeeds at the step where the execution for $(\mathcal{A},C)$ stops,
or earlier.
\end{proof}

By Proposition \ref{prop:sp-basic}(\ref{enu:sp-basic1}), the collections
$\mathrm{sp}^{i}(\mathcal{A})$ form an increasing chain. As $\mathcal{C}(M)$
is finite, this chain stabilizes.
\begin{prop}
\label{prop:closure}Let $\mathcal{A}$ and $\mathcal{A}'$ be circuit
systems of $M$.
\begin{enumerate}
\item \label{enu:closure1}$\mathcal{A}\subseteq\mathrm{sp}^{\infty}(\mathcal{A})$.
\item \label{enu:closure2}If $\mathcal{A}\subseteq\mathcal{A}'$, then
$\mathrm{sp}^{\infty}(\mathcal{A})\subseteq\mathrm{sp}^{\infty}(\mathcal{A}')$.
\item \label{enu:closure3}$\mathrm{sp}^{\infty}(\mathrm{sp}^{\infty}(\mathcal{A}))=\mathrm{sp}^{\infty}(\mathcal{A})$. 
\end{enumerate}
\end{prop}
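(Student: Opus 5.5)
The plan is to derive all three items from Proposition~\ref{prop:sp-basic}, using only the definition of $\mathrm{sp}^{\infty}$ as the union of the increasing chain $\mathrm{sp}^{0}(\mathcal{A})\subseteq\mathrm{sp}^{1}(\mathcal{A})\subseteq\cdots$ and the fact that this chain stabilizes because $\mathcal{C}(M)$ is finite.

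For (\ref{enu:closure1}), the inclusion is immediate: $\mathcal{A}=\mathrm{sp}^{0}(\mathcal{A})$ is one of the terms of the union defining $\mathrm{sp}^{\infty}(\mathcal{A})$.

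For (\ref{enu:closure2}), I will prove by induction on $i$ that $\mathrm{sp}^{i}(\mathcal{A})\subseteq\mathrm{sp}^{i}(\mathcal{A}')$.
\begin{itemize}
\item The case $i=0$ is the hypothesis $\mathcal{A}\subseteq\mathcal{A}'$.
\item For the inductive step, apply Proposition~\ref{prop:sp-basic}(\ref{enu:sp-basic2}) to the inclusion $\mathrm{sp}^{i-1}(\mathcal{A})\subseteq\mathrm{sp}^{i-1}(\mathcal{A}')$. This gives $\mathrm{sp}^{i}(\mathcal{A})\subseteq\mathrm{sp}^{i}(\mathcal{A}')$.
\end{itemize}
Taking unions over $i$ yields $\mathrm{sp}^{\infty}(\mathcal{A})\subseteq\mathrm{sp}^{\infty}(\mathcal{A}')$.

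For (\ref{enu:closure3}), first use stabilization. Choose $t$ with $\mathrm{sp}^{i}(\mathcal{A})=\mathrm{sp}^{t}(\mathcal{A})$ for all $i\ge t$. Then $\mathcal{S}:=\mathrm{sp}^{\infty}(\mathcal{A})=\mathrm{sp}^{t}(\mathcal{A})$, and
\[
\mathrm{sp}(\mathcal{S})=\mathrm{sp}^{t+1}(\mathcal{A})=\mathcal{S}.
\]
So $\mathcal{S}$ is a fixed point of $\mathrm{sp}$. By induction, $\mathrm{sp}^{i}(\mathcal{S})=\mathcal{S}$ for every $i\ge0$, and hence $\mathrm{sp}^{\infty}(\mathcal{S})=\mathcal{S}$. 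Alternatively, (\ref{enu:closure1}) gives one inclusion and the fixed-point property gives the other.

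I expect no genuine obstacle. The only point needing care is that the chain really stabilizes. This holds because it is increasing by Proposition~\ref{prop:sp-basic}(\ref{enu:sp-basic1}) and lies inside the finite collection $\mathcal{C}(M)$. Once we have stabilization, we may identify $\mathrm{sp}^{\infty}(\mathcal{A})$ with a single finite iterate $\mathrm{sp}^{t}(\mathcal{A})$.
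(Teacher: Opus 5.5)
Your proof is correct and follows the paper's argument step for step: (1) comes from $\mathrm{sp}^{0}(\mathcal{A})=\mathcal{A}$, (2) comes by induction on $i$ using Proposition~\ref{prop:sp-basic}(\ref{enu:sp-basic2}) and then taking unions, and (3) comes from identifying $\mathrm{sp}^{\infty}(\mathcal{A})$ with a stable iterate $\mathrm{sp}^{t}(\mathcal{A})$. The paper writes (3) as $\mathrm{sp}^{j}(\mathrm{sp}^{t}(\mathcal{A}))=\mathrm{sp}^{t+j}(\mathcal{A})=\mathrm{sp}^{\infty}(\mathcal{A})$ for all $j$, which is the same as your fixed-point induction.
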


\begin{proof}
(\ref{enu:closure1}) holds since $\mathrm{sp}^{0}(\mathcal{A})=\mathcal{A}$.
For (\ref{enu:closure2}), Proposition \ref{prop:sp-basic}(\ref{enu:sp-basic2})
and induction on $i$ give $\mathrm{sp}^{i}(\mathcal{A})\subseteq\mathrm{sp}^{i}(\mathcal{A}')$
for every $i$, and the claim follows by taking unions. For (\ref{enu:closure3}),
choose $i$ with $\mathrm{sp}^{\infty}(\mathcal{A})=\mathrm{sp}^{i}(\mathcal{A})$.
Then $\mathrm{sp}^{j}(\mathrm{sp}^{\infty}(\mathcal{A}))=\mathrm{sp}^{j}(\mathrm{sp}^{i}(\mathcal{A}))=\mathrm{sp}^{i+j}(\mathcal{A})=\mathrm{sp}^{\infty}(\mathcal{A})$
for every $j$, and taking the union over $j$ gives (\ref{enu:closure3}).
\end{proof}

Thus $\mathrm{sp}^{\infty}$ is a closure operator on $2^{\mathcal{C}(M)}$.
A closure operator is a matroid closure operator precisely when it
satisfies the Mac\,Lane–Steinitz exchange property, which fails for
$\mathrm{sp}^{\infty}$ by Example \ref{exa:Vamos1}(\ref{enu:not-mcl}).
\begin{defn}
Let $\mathcal{A}$ and $\mathcal{A}'$ be circuit systems of $M$.
We say that $\mathcal{A}'$ \emph{Gaussian spans} $\mathcal{A}$ if
$\mathcal{A}\subseteq\mathrm{sp}^{\infty}(\mathcal{A}')$. The \emph{Gaussian
spanning number} $\mathrm{sn}(\mathcal{A})$ of $\mathcal{A}$ is
the least cardinality of a circuit system that Gaussian spans $\mathcal{A}$. 
\end{defn}

In the definition, $\mathcal{A}'$ is not required to satisfy the
stronger condition $\mathrm{sp}^{\infty}(\mathcal{A})=\mathrm{sp}^{\infty}(\mathcal{A}')$.
When $\mathcal{A}=\mathcal{C}(M)$, however, the two conditions agree,
since $\mathrm{sp}^{\infty}(\mathcal{A}')$ is included in $\mathcal{C}(M)$.
\begin{prop}
\label{prop:sn-basic}Let $\mathcal{A}_{1}$ and $\mathcal{A}_{2}$
be circuit systems of $M$.
\begin{enumerate}
\item \label{enu:sn-basic1}$\mathrm{sn}(\mathcal{A}_{1})\le\left|\mathcal{A}_{1}\right|$. 
\item \label{enu:sn-basic2}If $\mathcal{A}_{1}\subseteq\mathcal{A}_{2}$,
then $\mathrm{sn}(\mathcal{A}_{1})\le\mathrm{sn}(\mathcal{A}_{2})$.
\item \label{enu:sn-basic3}$\mathrm{sn}(\mathcal{A}_{1}\cup\mathcal{A}_{2})\le\mathrm{sn}(\mathcal{A}_{1})+\mathrm{sn}(\mathcal{A}_{2})$. 
\end{enumerate}
\end{prop}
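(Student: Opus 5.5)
Each of the three parts follows from the closure properties of $\mathrm{sp}^{\infty}$ in Proposition~\ref{prop:closure}, applied to the definition of $\mathrm{sn}$ as a minimum over circuit systems $\mathcal{A}'$ with $\mathcal{A}\subseteq\mathrm{sp}^{\infty}(\mathcal{A}')$. The plan is to exhibit a suitable spanning system in each case and bound its size. The minimum is always over a nonempty family of candidates, since $\mathcal{A}$ itself is one.

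For (\ref{enu:sn-basic1}), take $\mathcal{A}'=\mathcal{A}_{1}$. By Proposition~\ref{prop:closure}(\ref{enu:closure1}), $\mathcal{A}_{1}\subseteq\mathrm{sp}^{\infty}(\mathcal{A}_{1})$, so $\mathcal{A}_{1}$ Gaussian spans itself. Hence $\mathrm{sn}(\mathcal{A}_{1})\le|\mathcal{A}_{1}|$.

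For (\ref{enu:sn-basic2}), let $\mathcal{A}'$ be a circuit system of size $\mathrm{sn}(\mathcal{A}_{2})$ that Gaussian spans $\mathcal{A}_{2}$. Then $\mathcal{A}_{1}\subseteq\mathcal{A}_{2}\subseteq\mathrm{sp}^{\infty}(\mathcal{A}')$, so $\mathcal{A}'$ also Gaussian spans $\mathcal{A}_{1}$. Hence $\mathrm{sn}(\mathcal{A}_{1})\le|\mathcal{A}'|=\mathrm{sn}(\mathcal{A}_{2})$. No monotonicity of $\mathrm{sp}^{\infty}$ is needed here, only transitivity of inclusion.

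For (\ref{enu:sn-basic3}), choose $\mathcal{A}'_{1}$ and $\mathcal{A}'_{2}$ of minimal sizes $\mathrm{sn}(\mathcal{A}_{1})$ and $\mathrm{sn}(\mathcal{A}_{2})$ that Gaussian span $\mathcal{A}_{1}$ and $\mathcal{A}_{2}$ respectively, and put $\mathcal{A}'=\mathcal{A}'_{1}\cup\mathcal{A}'_{2}$. By Proposition~\ref{prop:closure}(\ref{enu:closure2}), $\mathrm{sp}^{\infty}(\mathcal{A}'_{i})\subseteq\mathrm{sp}^{\infty}(\mathcal{A}')$ for $i=1,2$. Therefore $\mathcal{A}_{1}\cup\mathcal{A}_{2}\subseteq\mathrm{sp}^{\infty}(\mathcal{A}')$, and
\[
\mathrm{sn}(\mathcal{A}_{1}\cup\mathcal{A}_{2})\le|\mathcal{A}'|\le|\mathcal{A}'_{1}|+|\mathcal{A}'_{2}|.
\]
The only substantive input is the monotonicity in (\ref{enu:closure2}), which was already established through Proposition~\ref{prop:sp-basic}(\ref{enu:sp-basic2}), so I expect no real obstacle.
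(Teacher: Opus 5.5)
Your proposal is correct and follows the paper's proof essentially verbatim: (1) uses Proposition~\ref{prop:closure}(\ref{enu:closure1}) to let $\mathcal{A}_{1}$ span itself, (2) uses a minimal spanning system of $\mathcal{A}_{2}$ together with $\mathcal{A}_{1}\subseteq\mathcal{A}_{2}$, and (3) uses minimal spanning systems $\mathcal{A}'_{1},\mathcal{A}'_{2}$ with the monotonicity in Proposition~\ref{prop:closure}(\ref{enu:closure2}) applied to $\mathcal{A}'_{1}\cup\mathcal{A}'_{2}$. Your remark that the minimum defining $\mathrm{sn}$ is taken over a nonempty family is a small but welcome addition that the paper leaves implicit.
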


\begin{proof}
(\ref{enu:sn-basic1}) holds since $\mathcal{A}_{1}$ Gaussian spans
itself, by Proposition \ref{prop:closure}(\ref{enu:closure1}). For
$i\in\{1,2\}$, take a circuit system $\mathcal{A}_{i}'$ of least
cardinality that Gaussian spans $\mathcal{A}_{i}$. Then (\ref{enu:sn-basic2})
follows from $\mathcal{A}_{1}\subseteq\mathcal{A}_{2}\subseteq\mathrm{sp}^{\infty}(\mathcal{A}_{2}')$,
which gives $\mathrm{sn}(\mathcal{A}_{1})\le\left|\mathcal{A}_{2}'\right|=\mathrm{sn}(\mathcal{A}_{2})$.
For (\ref{enu:sn-basic3}), Proposition \ref{prop:closure}(\ref{enu:closure2})
gives $\mathcal{A}_{1}\cup\mathcal{A}_{2}\subseteq\mathrm{sp}^{\infty}(\mathcal{A}_{1}'\cup\mathcal{A}_{2}')$,
so $\mathrm{sn}(\mathcal{A}_{1}\cup\mathcal{A}_{2})\le\left|\mathcal{A}_{1}'\cup\mathcal{A}_{2}'\right|\le\left|\mathcal{A}_{1}'\right|+\left|\mathcal{A}_{2}'\right|=\mathrm{sn}(\mathcal{A}_{1})+\mathrm{sn}(\mathcal{A}_{2})$. 
\end{proof}

\begin{lem}
\label{lem:sn-upper}For a base $B$ of $M$, the fundamental circuit
system $\mathcal{C}_{B}$ is a minimal circuit system that Gaussian
spans $\mathcal{C}(M)$. In particular, $\mathrm{sn}(\mathcal{C}(M))\le n-k$.
\end{lem}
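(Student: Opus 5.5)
The plan has two parts: show that $\mathcal{C}_{B}$ Gaussian spans $\mathcal{C}(M)$, and show that no proper subsystem of $\mathcal{C}_{B}$ does. The bound $\mathrm{sn}(\mathcal{C}(M))\le n-k$ then follows at once, because $\left|\mathcal{C}_{B}\right|=n-k$.

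\textbf{Spanning.} By Proposition \ref{prop:span}, every circuit of $M$ is a Gaussian combination of $\mathcal{C}_{B}$. Hence
\[
\mathcal{C}(M)\subseteq\mathrm{sp}(\mathcal{C}_{B})\subseteq\mathrm{sp}^{\infty}(\mathcal{C}_{B}),
\]
so $\mathcal{C}_{B}$ Gaussian spans $\mathcal{C}(M)$.

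\textbf{Minimality.} Let $\mathcal{A}\subsetneq\mathcal{C}_{B}$, and pick $e\in E(M)\backslash B$ with $C(e,B)\notin\mathcal{A}$. As noted in the proof of Proposition \ref{prop:echelon}, $C(e,B)$ is the only member of $\mathcal{C}_{B}$ containing $e$. Therefore $e\notin\bigcup\mathcal{A}$.

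\textbf{Induction on the iterates.} I will show by induction on $i$ that every member of $\mathrm{sp}^{i}(\mathcal{A})$ is included in $\bigcup\mathcal{A}$. The case $i=0$ is trivial. For the inductive step, Lemma \ref{lem:gc-union} says that each Gaussian combination of $\mathrm{sp}^{i-1}(\mathcal{A})$ is included in $\bigcup\mathrm{sp}^{i-1}(\mathcal{A})$, and by the inductive hypothesis this union is included in $\bigcup\mathcal{A}$. Taking the union over $i$, every circuit in $\mathrm{sp}^{\infty}(\mathcal{A})$ avoids $e$. Since $e\in C(e,B)$, we get $C(e,B)\notin\mathrm{sp}^{\infty}(\mathcal{A})$. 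So $\mathcal{A}$ does not Gaussian span $\mathcal{C}(M)$, and $\mathcal{C}_{B}$ is minimal.

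I do not expect a real obstacle here. The one point to get right is that Lemma \ref{lem:gc-union} must be propagated through all the iterates $\mathrm{sp}^{i}$, not just the first, and the induction above handles this. Minimality is meant with respect to inclusion, so the argument needs no comparison of cardinalities.
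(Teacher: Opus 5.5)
Your proposal is correct and follows essentially the same route as the paper: spanning comes from Proposition~\ref{prop:span}, and minimality from the fact that $C(e,B)$ is the only member of $\mathcal{C}_{B}$ containing $e$, combined with Lemma~\ref{lem:gc-union}. Your explicit induction through the iterates $\mathrm{sp}^{i}$ just spells out a step the paper applies to $\mathrm{sp}^{\infty}(\mathcal{A})$ without comment.
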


\begin{proof}
Proposition \ref{prop:span} gives $\mathcal{C}(M)=\mathrm{sp}^{\infty}(\mathcal{C}_{B})$,
so $\mathrm{sn}(\mathcal{C}(M))\le\left|\mathcal{C}_{B}\right|=n-k$.
A proper subsystem $\mathcal{A}$ of $\mathcal{C}_{B}$ omits $C(e,B)$
for some $e\notin B$. Since $C(e,B)$ is the only member of $\mathcal{C}_{B}$
containing $e$, no member of $\mathcal{A}$ contains $e$, and by
Lemma \ref{lem:gc-union}, neither does any member of $\mathrm{sp}^{\infty}(\mathcal{A})$.
Hence $\ensuremath{C(e,B)\notin\mathrm{sp}^{\infty}(\mathcal{A})}$,
and $\mathcal{C}_{B}$ is minimal. 
\end{proof}

\begin{lem}
\label{lem:sp-two}Let $C_{1}$ and $C_{2}$ be distinct circuits
of $M$ and put $D=C_{1}\cup C_{2}$. Then 
\[
\mathrm{sp}^{\infty}(\{C_{1},C_{2}\})=\begin{cases}
\mathcal{C}(M|D) & \text{if }\mathrm{nullity}(D)=2,\\
\{C_{1},C_{2}\} & \text{otherwise}.
\end{cases}
\]
In the first case, $\mathcal{C}(M|D)=\{C_{1},C_{2}\}$ if $C_{1}\cap C_{2}=\emptyset$,
and $\left|\mathcal{C}(M|D)\right|\ge3$ otherwise.
\end{lem}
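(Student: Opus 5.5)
The plan is to compute $\mathrm{sp}(\{C_{1},C_{2}\})$ directly from the Gaussian process, and then pass to $\mathrm{sp}^{\infty}$ using Lemma \ref{lem:gc-union}.

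\textbf{Analysing one process.} Let $C\notin\{C_{1},C_{2}\}$ be a circuit and run the Gaussian process for $(\{C_{1},C_{2}\},C)$.

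At step $1$ we have $\Lambda_{1}=\{1,2\}$ and $C\ne X_{1},X_{2}$, so we pick a pivot, say $X_{1}=C_{1}$, and an element $c\in C_{1}\backslash C$. There are two possibilities for $c$:
\begin{itemize}
\item If $c\notin C_{2}$, then $X_{2}=C_{2}$ is unchanged.
\item If $c\in C_{1}\cap C_{2}$, then $X_{2}$ becomes $\lfloor D\backslash\{c\}\rfloor$.
\end{itemize}
In either case $|\Lambda_{2}|=1$, so the process stops.

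Hence $C\in\mathrm{sp}(\{C_{1},C_{2}\})$ if and only if $C=\lfloor D\backslash\{c\}\rfloor$ for some $c\in C_{1}\cap C_{2}$ with $c\notin C$ (up to swapping the roles of $C_{1}$ and $C_{2}$). Since $\lfloor D\backslash\{c\}\rfloor$ is the union of the circuits included in $D\backslash\{c\}$, it is a circuit exactly when $D\backslash\{c\}$ includes exactly one circuit.

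\textbf{The "otherwise" case.}
\begin{itemize}
\item If $C_{1}\cap C_{2}=\emptyset$, no elimination element exists, so $\mathrm{sp}(\{C_{1},C_{2}\})=\{C_{1},C_{2}\}$.
\item If $C_{1}\cap C_{2}\neq\emptyset$ and $\mathrm{nullity}(D)\neq2$, then Lemma \ref{lem:two-circ} ((1)$\Leftrightarrow$(3)) shows that $D\backslash\{c\}$ includes at least two circuits for every such $c$. So again $\mathrm{sp}(\{C_{1},C_{2}\})=\{C_{1},C_{2}\}$.
\end{itemize}
Iterating, $\mathrm{sp}^{\infty}(\{C_{1},C_{2}\})=\{C_{1},C_{2}\}$. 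This handles every configuration with $\mathrm{nullity}(D)\neq2$, together with the disjoint configurations.

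\textbf{Nullity $2$, disjoint circuits.} Suppose $\mathrm{nullity}(D)=2$ and $C_{1}\cap C_{2}=\emptyset$. The span is $\{C_{1},C_{2}\}$ as just shown, so it remains to show $\mathcal{C}(M|D)=\{C_{1},C_{2}\}$.

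Take a base $B$ of $M|D$, so that $D=B\cup\{e_{1},e_{2}\}$. Apply Lemma \ref{lem:fund-circ} to the matroid $M|D$, whose circuits are exactly the circuits of $M$ included in $D$. By Lemma \ref{lem:fund-circ}(\ref{enu:fund3}), the two disjoint circuits force $m=2$. Hence $\mathcal{C}(M|D)=\{C_{1},C_{2}\}$, matching the first formula.

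\textbf{Nullity $2$, intersecting circuits.} Suppose $\mathrm{nullity}(D)=2$ and $C_{1}\cap C_{2}\neq\emptyset$.

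First, the bound $|\mathcal{C}(M|D)|\ge3$ follows from Lemma \ref{lem:fund-circ}(\ref{enu:fund3}) applied to $M|D$ as above.

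Next, the inclusion $\mathcal{C}(M|D)\subseteq\mathrm{sp}(\{C_{1},C_{2}\})$:
\begin{itemize}
\item Let $C\in\mathcal{C}(M|D)$ with $C\neq C_{1},C_{2}$.
\item By Lemma \ref{lem:fund-circ}(\ref{enu:fund2}), the set $D\backslash C$ is a nonempty block of the partition; choose $x$ in it.
\item By Lemma \ref{lem:fund-circ}(\ref{enu:fund1}), $x$ lies in every circuit of $M|D$ except $C$. In particular $x\in C_{1}\cap C_{2}$ and $x\notin C$.
\item Also by Lemma \ref{lem:fund-circ}(\ref{enu:fund1}), $C$ is the unique circuit included in $D\backslash\{x\}$, so $\lfloor D\backslash\{x\}\rfloor=C$.
\item The execution with pivot $C_{1}$ and $c_{1}=x$ therefore succeeds, giving $C\in\mathrm{sp}(\{C_{1},C_{2}\})$.
\end{itemize}

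Finally, the reverse inclusion. Every Gaussian combination of a system of circuits included in $D$ is again included in $D$, by Lemma \ref{lem:gc-union}. By induction on $i$, every member of $\mathrm{sp}^{i}(\{C_{1},C_{2}\})$ is a circuit of $M$ included in $D$, that is, a member of $\mathcal{C}(M|D)$. This gives $\mathrm{sp}^{\infty}(\{C_{1},C_{2}\})=\mathcal{C}(M|D)$.

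\textbf{Expected difficulty.} The only delicate point is the first step: showing that the process for two circuits stops after a single step, so that success reduces exactly to the condition $C=\lfloor D\backslash\{c\}\rfloor$. This requires tracking $\Lambda_{2}$ and the stopping rule carefully. The remaining steps are direct applications of Lemma \ref{lem:fund-circ}, Lemma \ref{lem:two-circ} and Lemma \ref{lem:gc-union}.
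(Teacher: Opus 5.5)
Your proof is correct and follows essentially the paper's route: a single Gaussian step, Lemma~\ref{lem:two-circ} to rule out new circuits unless $\mathrm{nullity}(D)=2$, Lemma~\ref{lem:fund-circ} to obtain every other circuit of $M|D$ as $\lfloor D\backslash\{c\}\rfloor$ with $c\in C_{1}\cap C_{2}$, and Lemma~\ref{lem:gc-union} to close up. The differences are minor: you apply Lemma~\ref{lem:fund-circ} to a base of $M|D$ rather than taking $B$ from Lemma~\ref{lem:two-circ}(\ref{enu:two-circ-4}), and you settle the disjoint nullity-$2$ case with Lemma~\ref{lem:fund-circ}(\ref{enu:fund3}) rather than the paper's separator argument; also, to conclude that \emph{every} $c$ fails you need the negation of (\ref{enu:two-circ-2}), not of (\ref{enu:two-circ-3}), though the lemma's full equivalence covers this.
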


\begin{proof}
In the Gaussian process for a circuit system with two members, at
most one Gaussian operation is performed. Hence a member of $\mathrm{sp}(\{C_{1},C_{2}\})$
other than $C_{1}$ and $C_{2}$ has the form $\lfloor D\backslash\{c\}\rfloor$
with $c\in C_{1}\cap C_{2}$, and $D\backslash\{c\}$ then includes
exactly one circuit. By Lemma \ref{lem:two-circ}, such a member exists
only if $C_{1}\cap C_{2}\neq\emptyset$ and $\mathrm{nullity}(D)=2$.

Suppose first that $C_{1}\cap C_{2}=\emptyset$ or $\mathrm{nullity}(D)\neq2$.
Then $\mathrm{sp}(\{C_{1},C_{2}\})=\{C_{1},C_{2}\}$, and hence $\mathrm{sp}^{\infty}(\{C_{1},C_{2}\})=\{C_{1},C_{2}\}$.
If in addition $\mathrm{nullity}(D)=2$, then $C_{1}\cap C_{2}=\emptyset$
and $r(C_{1})+r(C_{2})=\left|D\right|-2=r(D)$, so $C_{1}$ and $C_{2}$
are separators of $M|D$ and $\mathcal{C}(M|D)=\{C_{1},C_{2}\}$,
in agreement with the statement.

Suppose next that $C_{1}\cap C_{2}\neq\emptyset$ and $\mathrm{nullity}(D)=2$.
By Lemma \ref{lem:two-circ}, there exist a base $B$ of $M$ and
$e_{1},e_{2}\in E(M)\backslash B$ with $D\subseteq\tilde{B}:=B\cup\{e_{1},e_{2}\}$
and $C_{i}=C(e_{i},B)$ for $i=1,2$. Let $C_{3},\dots,C_{m}$ be
the circuits included in $\tilde{B}$ other than $C_{1}$ and $C_{2}$.
Then $\mathcal{C}(M|D)=\{C_{1},\dots,C_{m}\}$ by Lemma \ref{lem:fund-circ}(\ref{enu:fund2}),
and $m\ge3$ by Lemma \ref{lem:fund-circ}(\ref{enu:fund3}). For
$j\ge3$, Lemma \ref{lem:fund-circ}(\ref{enu:fund2}) gives an element
$c\in D\backslash C_{j}$, and Lemma \ref{lem:fund-circ}(\ref{enu:fund1})
shows that $c\in C_{1}\cap C_{2}$ and $C_{j}$ is the unique circuit
included in $D\backslash\{c\}$; hence $C_{j}=\lfloor D\backslash\{c\}\rfloor\in\mathrm{sp}(\{C_{1},C_{2}\})$.
Thus $\mathrm{sp}(\{C_{1},C_{2}\})=\mathcal{C}(M|D)$, and since every
Gaussian combination of $\mathcal{C}(M|D)$ is included in $D$ by
Lemma \ref{lem:gc-union}, we get $\mathrm{sp}^{\infty}(\{C_{1},C_{2}\})=\mathcal{C}(M|D)$.
\end{proof}

\begin{lem}
\label{lem:sn-12}Let $M$ be connected, and let $i\in\{1,2\}$. Then
$\mathrm{sn}(\mathcal{C}(M))=i$ if and only if $n-k=i$. 
\end{lem}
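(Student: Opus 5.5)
The plan is to prove the two directions separately, with $M$ connected and loopless as assumed throughout this section. The main tools are Lemma~\ref{lem:sn-upper} for the upper bound, Lemma~\ref{lem:gc-union} for lower bounds, and Lemma~\ref{lem:sp-two} for the case of two circuits.

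\emph{The direction $n-k=i \Rightarrow \mathrm{sn}(\mathcal{C}(M))=i$.} Lemma~\ref{lem:sn-upper} gives $\mathrm{sn}(\mathcal{C}(M))\le n-k=i$. It remains to exclude smaller values.
\begin{itemize}
\item The value $0$ is impossible: $\mathrm{sp}^{\infty}(\emptyset)=\emptyset$ by Lemma~\ref{lem:gc-union}, while $\mathcal{C}(M)\neq\emptyset$ because $n-k\ge1$.
\item For $i=2$, the value $1$ must also be excluded. A single circuit $C$ satisfies $\mathrm{sp}^{\infty}(\{C\})=\{C\}$, since the process stops at once with $|\Lambda_1|\le1$. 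But $\mathcal{C}(M)$ contains the two distinct fundamental circuits of any base, so one circuit cannot Gaussian span it.
\end{itemize}

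\emph{The direction $\mathrm{sn}(\mathcal{C}(M))=i \Rightarrow n-k=i$, case $i=1$.} Here $\mathcal{C}(M)=\{C\}$ for a single circuit $C$, by the observation above. Since $M$ is connected and loopless, every element lies in a circuit (when $n\ge2$), so $E(M)=C$ and $n-k=\mathrm{nullity}(C)=1$. The degenerate case of a single coloop has no circuits and $\mathrm{sn}=0$, so it does not arise.

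\emph{The direction $\mathrm{sn}(\mathcal{C}(M))=i \Rightarrow n-k=i$, case $i=2$.} There are distinct circuits $C_1,C_2$ with $\mathcal{C}(M)\subseteq \mathrm{sp}^{\infty}(\{C_1,C_2\})$.
\begin{itemize}
\item Every circuit lies in $D:=C_1\cup C_2$ by Lemma~\ref{lem:gc-union}. Connectedness then forces $E(M)=D$, since every element lies in a circuit.
\item Suppose $\mathrm{nullity}(D)\neq2$. Then Lemma~\ref{lem:sp-two} gives $\mathcal{C}(M)=\{C_1,C_2\}$. I expect this to be the one mildly delicate point.
  \begin{itemize}
  \item If $C_1\cap C_2\neq\emptyset$, circuit elimination yields a circuit inside $D\backslash\{e\}$ for $e\in C_1\cap C_2$. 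That circuit is neither $C_1$ nor $C_2$, a contradiction.
  \item So $C_1\cap C_2=\emptyset$. No circuit then meets both $C_1$ and $C_2$, which makes $C_1$ and $C_2$ separators of $M$. This contradicts connectedness, given that both are nonempty and $E(M)=D$.
  \end{itemize}
\item Hence $\mathrm{nullity}(D)=2$, that is, $n-k=|E(M)|-r(M)=\mathrm{nullity}(D)=2$.
\end{itemize}

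Alternatively, one can argue via $r(C_1)+r(C_2)=|D|-2$ when $C_1$ and $C_2$ are disjoint, as in the proof of Lemma~\ref{lem:sp-two}. Either route completes the equivalence.
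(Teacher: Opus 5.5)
Your proposal is correct and follows the paper's proof essentially step by step: Lemma~\ref{lem:gc-union} plus connectedness gives $E(M)=C$ or $E(M)=C_1\cup C_2$, Lemma~\ref{lem:sp-two} together with circuit elimination rules out $\mathrm{nullity}(C_1\cup C_2)\neq 2$, and Lemma~\ref{lem:sn-upper} gives the converse. The only cosmetic difference is in the converse for $n-k=2$: you rule out $\mathrm{sn}(\mathcal{C}(M))=1$ by exhibiting two distinct fundamental circuits, whereas the paper appeals to the already-proved case $i=1$.
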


\begin{proof}
Suppose first that $\mathrm{sn}(\mathcal{C}(M))=1$, so that $\mathcal{C}(M)=\mathrm{sp}^{\infty}(\{C\})$
for some circuit $C$ of $M$. The Gaussian process for a circuit
system with one member stops at the first step, so $\mathrm{sp}(\{C\})=\{C\}$
and hence $\mathcal{C}(M)=\{C\}$. Every element outside $C$ is then
a coloop, so $C=E(M)$ by connectedness and $n-k=1$. 

Suppose next that $\mathrm{sn}(\mathcal{C}(M))=2$, so that $\mathcal{C}(M)=\mathrm{sp}^{\infty}(\{C_{1},C_{2}\})$
for distinct circuits $C_{1}$ and $C_{2}$ of $M$. Then every circuit
of $M$ is included in $C_{1}\cup C_{2}$ by Lemma \ref{lem:gc-union},
so $C_{1}\cup C_{2}=E(M)$ by connectedness. If $\mathrm{nullity}(E(M))\neq2$,
then $\mathcal{C}(M)=\mathrm{sp}^{\infty}(\{C_{1},C_{2}\})=\{C_{1},C_{2}\}$
by Lemma \ref{lem:sp-two}, so $M$ has exactly two circuits and $C_{1}\cap C_{2}=\emptyset$
by circuit elimination, contradicting the connectedness of $M$. Hence
$n-k=2$.

Conversely, $\mathrm{sn}(\mathcal{C}(M))\le n-k$ by Lemma \ref{lem:sn-upper},
and $\mathrm{sn}(\mathcal{C}(M))\neq0$ as $M$ has a circuit. If
$n-k=1$, then $\mathrm{sn}(\mathcal{C}(M))=1$. If $n-k=2$, then
$\mathrm{sn}(\mathcal{C}(M))\neq1$ by the case $i=1$, so $\mathrm{sn}(\mathcal{C}(M))=2$.
\end{proof}

\begin{cor}
\label{cor:sn-3}Let $M$ be connected. Then $\mathrm{sn}(\mathcal{C}(M))\ge3$
if and only if $n-k\ge3$.
\end{cor}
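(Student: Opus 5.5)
The corollary follows by combining Lemma \ref{lem:sn-12} with a lower bound showing that $\mathrm{sn}(\mathcal{C}(M))$ is never $0$ when $M$ has a circuit. The plan is to prove both directions as contrapositives, using the equivalences of Lemma \ref{lem:sn-12} for $i=1,2$ together with the bounds $1\le\mathrm{sn}(\mathcal{C}(M))\le n-k$ whenever $n-k\ge1$.

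For the forward direction, suppose $\mathrm{sn}(\mathcal{C}(M))\ge3$. Lemma \ref{lem:sn-upper} gives $n-k\ge\mathrm{sn}(\mathcal{C}(M))\ge3$ directly; this needs no connectedness. For the converse, suppose $n-k\ge3$. Then $M$ is not free, so it has a circuit. A circuit system Gaussian spanning the nonempty collection $\mathcal{C}(M)$ must therefore be nonempty, because the Gaussian process on the empty system stops immediately without success. Hence $\mathrm{sn}(\mathcal{C}(M))\ge1$. Since $M$ is connected and $n-k\notin\{1,2\}$, Lemma \ref{lem:sn-12} rules out $\mathrm{sn}(\mathcal{C}(M))=1$ and $\mathrm{sn}(\mathcal{C}(M))=2$. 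Therefore $\mathrm{sn}(\mathcal{C}(M))\ge3$.

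There is no real obstacle here. The only point needing care is the value $\mathrm{sn}=0$: one must note that $\mathrm{sp}^{\infty}(\emptyset)=\emptyset$, which holds because $\mathrm{sp}(\emptyset)=\emptyset$ (the process for $(\emptyset,C)$ never succeeds). This is exactly the fact used in the last paragraph of the proof of Lemma \ref{lem:sn-12}.
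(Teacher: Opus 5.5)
Your proof is correct and is essentially the paper's argument: the paper also obtains the corollary directly from Lemma \ref{lem:sn-12}, together with the fact that $\mathrm{sn}(\mathcal{C}(M))=0$ exactly when $n=k$. Your use of the bound $\mathrm{sn}(\mathcal{C}(M))\le n-k$ from Lemma \ref{lem:sn-upper} for the forward direction, in place of the converse halves of those equivalences, is only a minor variation.
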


\begin{proof}
Immediate from Lemma \ref{lem:sn-12}, since $\mathrm{sn}(\mathcal{C}(M))=0$
if and only if $n=k$.
\end{proof}

\begin{example}[Vámos matroid]
\label{exa:Vamos1} 
\begin{enumerate}
\item \label{enu:Vamos-DEF}The Vámos matroid $V_{8}$ is the rank-$4$
matroid on an eight-element set partitioned into four pairs, in which
five of the six unions of two pairs are circuits and the remaining
$\binom{8}{4}-5=65$ four-element subsets are bases. Label the pairs
$12$, $34$, $56$, and $78$ so that these circuits are $C_{0}=1234$,
$C_{1}=1256$, $C_{2}=1278$, $C_{3}=3456$, and $C_{4}=3478$. No
two of $C_{0},\ldots,C_{4}$ share more than two elements, so a five-element
subset includes at most one of them. Hence a five-element subset has
rank $4$ and nullity $1$, and it is a circuit precisely when it
is not of the form $C_{i}\cup\left\{ j\right\} $; there are $\binom{8}{5}-5\cdot4=36$
such subsets. There are no circuits of size at most $3$. Altogether,
$V_{8}$ has $41$ circuits.
\item \label{enu:Vamos-sn-3}Since $V_{8}$ is connected and $n-k=4\ge3$,
we have $\mathrm{sn}(\mathcal{C}(V_{8}))\ge3$ by Corollary \ref{cor:sn-3}.
To show the reverse inequality, let $\mathcal{A}=\{C_{0},C_{1},C_{4}\}$
and let $B=5678$, which is a base of $V_{8}$. By Propositions \ref{prop:span} and \ref{prop:sp-basic},
it suffices to show that $\mathcal{C}_{B}\subseteq\mathrm{sp}(\mathcal{A})$.
We run the Gaussian process for $\left(\mathcal{A},C(1,B)\right)$,
where $C(1,B)=15678$. Each set to which $\lfloor\cdot\rfloor$ is
applied below has five elements, so by (\ref{enu:Vamos-DEF}) and
Lemma \ref{lem:one-circ} it includes exactly one circuit, which is
the value of $\lfloor\cdot\rfloor$. At the first step, take the pivot
$C_{0}$ and $c_{1}=3\in C_{0}\backslash C(1,B)$. Since $3\in C_{4}$
and $3\notin C_{1}$, this replaces $C_{4}$ by $\left\lfloor (C_{0}\cup C_{4})\backslash\{3\}\right\rfloor =C_{2}$
and leaves $C_{1}$ unchanged. At the second step, take the pivot
$C_{1}$ and $c_{2}=2\in C_{1}\backslash C(1,B)$. Since $2\in C_{2}$,
this replaces $C_{2}$ by $\left\lfloor (C_{1}\cup C_{2})\backslash\{2\}\right\rfloor =C(1,B)$,
and the process succeeds. Hence $C(1,B)\in\mathrm{sp}(\mathcal{A})$.
Taking $c_{2}=1$ at the second step instead gives $C(2,B)=\left\lfloor (C_{1}\cup C_{2})\backslash\{1\}\right\rfloor $.
Similarly, the pivot $C_{1}$ with $c_{1}=1$ replaces $C_{0}$ by
$\left\lfloor (C_{0}\cup C_{1})\backslash\{1\}\right\rfloor =C_{3}$
and leaves $C_{4}$ unchanged since $1\notin C_{4}$, and a second
step with the pivot $C_{4}$ and $c_{2}=4$ or $c_{2}=3$ gives $C(3,B)=\left\lfloor (C_{3}\cup C_{4})\backslash\{4\}\right\rfloor $
or $C(4,B)=\left\lfloor (C_{3}\cup C_{4})\backslash\{3\}\right\rfloor $,
respectively. The Gaussian processes for $(\mathcal{A},C_{2})$ and
$(\mathcal{A},C_{3})$ succeed after the first steps above, so $C_{2},C_{3}\in\mathrm{sp}(\mathcal{A})$.
Thus $\left\{ C_{0},\dots,C_{4}\right\} \cup\mathcal{C}_{B}\subseteq\mathrm{sp}(\mathcal{A})$,
so $\mathcal{C}(V_{8})=\mathrm{sp}^{2}(\mathcal{A})=\mathrm{sp}^{\infty}(\mathcal{A})$
and $\mathrm{sn}(\mathcal{C}(V_{8}))\le\left|\mathcal{A}\right|=3$.
Therefore $\mathrm{sn}(\mathcal{C}(V_{8}))=3$.
\item \label{enu:not-mcl}The operator $\mathrm{sp}^{\infty}$ is not a
matroid closure operator. Indeed, $\mathcal{C}_{B}$ is a minimal
circuit system that Gaussian spans $\mathcal{C}(V_{8})$ by Lemma
\ref{lem:sn-upper}, and so is $\mathcal{A}$, since a proper subsystem
of $\mathcal{A}$ has at most two members and $\mathrm{sn}(\mathcal{C}(V_{8}))=3$
by (\ref{enu:Vamos-sn-3}). However, $\left|\mathcal{A}\right|=3$
and $\left|\mathcal{C}_{B}\right|=4$, whereas for a matroid closure
operator the minimal sets whose closure is the whole ground set are
the bases, which are equicardinal.
\end{enumerate}
\end{example}

\subsection{Syzygy matroids}

We now construct a matroid on $\mathcal{C}(M)$ from the Gaussian
spanning number. A first candidate for its spanning-set collection
is the collection $\mathfrak{T}=\left\{ \mathcal{A}\subseteq\mathcal{C}(M):\mathrm{sp}^{\infty}(\mathcal{A})=\mathcal{C}(M)\right\} $
of circuit systems that Gaussian span $\mathcal{C}(M)$. Recall the
spanning-set axioms of Brylawski \cite{DepAxm}: a collection $\mathfrak{S}\subseteq2^{E}$
is the spanning-set collection of a matroid on $E$ if and only if 
\begin{enumerate}[label=(S\arabic*),start=0]
\item \label{enu:span0}$E\in\mathfrak{S}$. 
\item \label{enu:span1}If $S\in\mathfrak{S}$ and $S\subseteq S'\subseteq E$,
then $S'\in\mathfrak{S}$.
\item \label{enu:span2}For each $E'\subseteq E$, the minimal members of
$\mathfrak{S}$ including $E'$ are equicardinal.
\end{enumerate}
On the ground set $\mathcal{C}(M)$, the collection $\mathfrak{T}$
satisfies \ref{enu:span0} and \ref{enu:span1} by Proposition \ref{prop:closure}(\ref{enu:closure1})
and (\ref{enu:closure2}), but not \ref{enu:span2} in general, as
Example~\ref{exa:Vamos1}(\ref{enu:not-mcl}) shows. Since Section~\ref{sec:Generating-Matroids}
provides a procedure for adjoining dependent sets but none for adjoining
spanning sets, we turn to dependent sets.\medskip{}

In a matroid, the rank of a subset $X$ is the least size of a subset
whose closure includes $X$, so $X$ is dependent precisely when it
is spanned by fewer elements than it contains. The Gaussian spanning
number is defined in the same way, with Gaussian spans in place of
closures. Accordingly, we take as the seed the collection of circuit
systems that are Gaussian spanned by fewer circuits than they contain,
that is, those $\mathcal{A}$ with $\left|\mathcal{A}\right|>\mathrm{sn}(\mathcal{A})$.
\begin{defn}
Let $\mathfrak{X}=\left\{ \mathcal{A}\subseteq\mathcal{C}(M):\left|\mathcal{A}\right|>\mathrm{sn}(\mathcal{A})\right\} $.
Since $\mathrm{sn}(\emptyset)=0=\left|\emptyset\right|$, we have
$\emptyset\notin\mathfrak{X}$, so $\mathfrak{X}$ is a seed. The
\emph{syzygy matroid} of $M$ is $\mathrm{Syz}(M):=\mathrm{Gen}(\mathfrak{X})$.
\end{defn}

\begin{prop}
\label{prop:syz-sn}If $\mathcal{A}$ is a circuit system of $M$,
then $r_{\mathrm{Syz}(M)}(\mathcal{A})\le\mathrm{sn}(\mathcal{A})$.
\end{prop}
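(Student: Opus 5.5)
Fix a circuit system $\mathcal{A}$ and choose a circuit system $\mathcal{A}'$ of least cardinality that Gaussian spans $\mathcal{A}$, so $\left|\mathcal{A}'\right|=\mathrm{sn}(\mathcal{A})$ and $\mathcal{A}\subseteq\mathrm{sp}^{\infty}(\mathcal{A}')$. I will show $\mathcal{A}\subseteq\mathrm{cl}_{\mathrm{Syz}(M)}(\mathcal{A}')$. The statement then follows from
\[
r_{\mathrm{Syz}(M)}(\mathcal{A})\le r_{\mathrm{Syz}(M)}(\mathcal{A}')\le\left|\mathcal{A}'\right|=\mathrm{sn}(\mathcal{A}).
\]

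Let $C\in\mathcal{A}\backslash\mathcal{A}'$, if there is one. Put $\mathcal{Y}=\mathcal{A}'\cup\{C\}$. Since $\mathcal{A}'\subseteq\mathcal{Y}$, Proposition~\ref{prop:closure}(\ref{enu:closure2}) gives $\mathrm{sp}^{\infty}(\mathcal{A}')\subseteq\mathrm{sp}^{\infty}(\mathcal{Y})$. Hence $\mathcal{A}'$ Gaussian spans $\mathcal{Y}$, because it spans itself by Proposition~\ref{prop:closure}(\ref{enu:closure1}) and spans $C$ by choice. So $\mathrm{sn}(\mathcal{Y})\le\left|\mathcal{A}'\right|<\left|\mathcal{Y}\right|$, which means $\mathcal{Y}\in\mathfrak{X}$. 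By Proposition~\ref{prop:seed}, $\mathcal{Y}$ is then dependent in $\mathrm{Syz}(M)=\mathrm{Gen}(\mathfrak{X})$.

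The step needing care is passing from ``$\mathcal{A}'\cup\{C\}$ is dependent'' to ``$C\in\mathrm{cl}(\mathcal{A}')$''. This would be immediate if $\mathcal{A}'$ were independent in $\mathrm{Syz}(M)$, but that is not guaranteed. I instead apply the same argument to every intermediate subsystem. Choose a subsystem $\mathcal{I}\subseteq\mathcal{A}'$ that is a basis of $\mathcal{A}'$ in $\mathrm{Syz}(M)$. Since $\mathcal{A}'\subseteq\mathrm{cl}(\mathcal{I})$, we have $\mathrm{cl}(\mathcal{A}')=\mathrm{cl}(\mathcal{I})$, so it suffices to prove $C\in\mathrm{cl}(\mathcal{I})$. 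Suppose not. Then $\mathcal{I}\cup\{C\}$ is independent. Extending it inside the independent-or-dependent structure of $\mathcal{A}'\cup\{C\}$ to a basis of $\mathcal{A}'\cup\{C\}$ gives rank $\left|\mathcal{I}\right|+1$, that is, $r(\mathcal{A}'\cup\{C\})=r(\mathcal{A}')+1$. That alone is no contradiction, so the set $\mathcal{Y}$ has to be chosen more cleverly.

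Take $\mathcal{Y}=\mathcal{I}\cup\{C\}\cup(\mathcal{A}'\backslash\mathcal{I})$ instead, and use that each member of $\mathcal{A}'\backslash\mathcal{I}$ lies in $\mathrm{cl}(\mathcal{I})$. I expect this to be the main obstacle: dependence of sets in $\mathfrak{X}$ does not by itself bound rank. I would first try to prove the stronger statement that every $\mathcal{Y}$ with $\left|\mathcal{Y}\right|-\mathrm{sn}(\mathcal{Y})=d$ has nullity at least $d$ in $\mathrm{Syz}(M)$, by induction on $d$. For the inductive step, pick $C\in\mathcal{Y}\backslash\mathcal{A}'$ for a minimal spanner $\mathcal{A}'$ of $\mathcal{Y}$. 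Then $\mathrm{sn}(\mathcal{Y}\backslash\{C\})\le\mathrm{sn}(\mathcal{Y})$, so the induction hypothesis applies to $\mathcal{Y}\backslash\{C\}$. The remaining task is to show that adding $C$ raises the nullity. For this I would use that $\mathcal{Y}'\cup\{C\}\in\mathfrak{X}$ for every subsystem $\mathcal{Y}'$ with $\mathcal{A}'\subseteq\mathcal{Y}'\subseteq\mathcal{Y}$, so $C$ lies in a circuit of $\mathrm{Syz}(M)$ within each such $\mathcal{Y}'\cup\{C\}$. Taking $\mathcal{Y}'$ to be $\mathcal{A}'$ minus elements outside a basis, and using circuit elimination in $\mathrm{Syz}(M)$, should yield $C\in\mathrm{cl}(\mathcal{Y}\backslash\{C\})$ and hence $\mathrm{nullity}(\mathcal{Y})=\mathrm{nullity}(\mathcal{Y}\backslash\{C\})+1$. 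Applying this with $\mathcal{Y}=\mathcal{A}\cup\mathcal{A}'$ and $\mathrm{sn}(\mathcal{Y})\le\left|\mathcal{A}'\right|$ gives the bound.
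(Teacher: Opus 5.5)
Your proposal has a genuine gap, and it is the step you flag yourself: passing from ``$\mathcal{A}'\cup\{C\}$ is dependent in $\mathrm{Syz}(M)$'' to ``$C\in\mathrm{cl}_{\mathrm{Syz}(M)}(\mathcal{A}')$''. That inference needs $\mathcal{A}'$ to be independent in $\mathrm{Syz}(M)$, and nothing in your argument shows this. Minimality of $\mathcal{A}'$ gives at most that no subsystem of $\mathcal{A}'$ lies in the seed $\mathfrak{X}$. But $\Phi^{\infty}(\mathfrak{X})$ also contains the sets produced by elimination, so it is in general larger than $\mathrm{uc}(\mathfrak{X})$.

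If $\mathcal{A}'$ is dependent, the circuit of $\mathrm{Syz}(M)$ inside $\mathcal{A}'\cup\{C\}$ may avoid $C$, and then you learn nothing about $C$. Indeed, the inequality you are proving only forces $r(\mathcal{A}'\cup\{C\})\le|\mathcal{A}'|$. When $r(\mathcal{A}')<|\mathcal{A}'|$, this is compatible with $C$ being a coloop of $\mathrm{Syz}(M)|(\mathcal{A}'\cup\{C\})$. Your intermediate goal $\mathcal{A}\subseteq\mathrm{cl}(\mathcal{A}')$ is essentially Corollary~\ref{cor:sp-cl}. The paper proves that corollary only for independent systems, and only as a consequence of this proposition.

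The inductive fallback does not repair this. The ``stronger statement'' $\mathrm{nullity}(\mathcal{Y})\ge|\mathcal{Y}|-\mathrm{sn}(\mathcal{Y})$ is literally equivalent to $r(\mathcal{Y})\le\mathrm{sn}(\mathcal{Y})$. Its inductive step asks for $C\in\mathrm{cl}(\mathcal{Y}\backslash\{C\})$. Your appeal to circuit elimination rests on the same unproven claim and breaks down in the same way when the minimum spanner is dependent.

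The missing idea is to bound the rank from inside $\mathcal{A}$, using independent subsets of $\mathcal{A}$, instead of trying to bound closures from the spanner.
\begin{itemize}
\item Let $\mathcal{I}$ be a basis of $\mathrm{Syz}(M)|\mathcal{A}$. Being independent, $\mathcal{I}\notin\Phi^{\infty}(\mathfrak{X})$.
\item Since $\mathfrak{X}\subseteq\Phi^{\infty}(\mathfrak{X})$, also $\mathcal{I}\notin\mathfrak{X}$, which means $|\mathcal{I}|\le\mathrm{sn}(\mathcal{I})$.
\item Monotonicity of $\mathrm{sn}$ (Proposition~\ref{prop:sn-basic}(\ref{enu:sn-basic2})) gives $\mathrm{sn}(\mathcal{I})\le\mathrm{sn}(\mathcal{A})$.
\item Hence $r_{\mathrm{Syz}(M)}(\mathcal{A})=|\mathcal{I}|\le\mathrm{sn}(\mathcal{A})$.
\end{itemize}
This is the paper's two-line proof. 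It never needs the spanner to be independent, and it never relates closures in $\mathrm{Syz}(M)$ to Gaussian spans.
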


\begin{proof}
Let $\mathcal{A}'$ be an independent set of $\mathrm{Syz}(M)|\mathcal{A}$.
Then $\mathcal{A}'\notin\Phi^{\infty}(\mathfrak{X})$ and $\mathfrak{X}\subseteq\Phi^{\infty}(\mathfrak{X})$,
so $\mathcal{A}'\notin\mathfrak{X}$ and $\left|\mathcal{A}'\right|\le\mathrm{sn}(\mathcal{A}')\le\mathrm{sn}(\mathcal{A})$
by Proposition~\ref{prop:sn-basic}(\ref{enu:sn-basic2}). Taking
$\mathcal{A}'$ to be a base of $\mathrm{Syz}(M)|\mathcal{A}$, we
have $r_{\mathrm{Syz}(M)}(\mathcal{A})=\left|\mathcal{A}'\right|\le\mathrm{sn}(\mathcal{A})$.
\end{proof}

\begin{cor}
\label{cor:rank-sn}$r(\mathrm{Syz}(M))\le\mathrm{sn}(\mathcal{C}(M))\le n-k$.
\end{cor}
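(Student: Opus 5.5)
This is a direct consequence of Proposition~\ref{prop:syz-sn} combined with Lemma~\ref{lem:sn-upper}, so the plan is simply to chain the two inequalities.

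For the first inequality, apply Proposition~\ref{prop:syz-sn} to the circuit system $\mathcal{A}=\mathcal{C}(M)$, which is the whole ground set of $\mathrm{Syz}(M)$. This gives
\[
r(\mathrm{Syz}(M))=r_{\mathrm{Syz}(M)}(\mathcal{C}(M))\le\mathrm{sn}(\mathcal{C}(M)).
\]
Recall how Proposition~\ref{prop:syz-sn} works: an independent set $\mathcal{A}'$ of $\mathrm{Syz}(M)$ lies outside $\Phi^{\infty}(\mathfrak{X})\supseteq\mathfrak{X}$. Hence $\left|\mathcal{A}'\right|\le\mathrm{sn}(\mathcal{A}')\le\mathrm{sn}(\mathcal{C}(M))$ by monotonicity of $\mathrm{sn}$ (Proposition~\ref{prop:sn-basic}(\ref{enu:sn-basic2})).

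For the second inequality, invoke Lemma~\ref{lem:sn-upper}. Fix any base $B$ of $M$. The fundamental circuit system $\mathcal{C}_{B}$ has $n-k$ members and Gaussian spans $\mathcal{C}(M)$ by Proposition~\ref{prop:span}, so $\mathrm{sn}(\mathcal{C}(M))\le\left|\mathcal{C}_{B}\right|=n-k$.

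There is no real obstacle here. The only point to check is the standing convention that $M$ is loopless in this part of the section, which is exactly the hypothesis under which both cited results were proved, so they apply verbatim.
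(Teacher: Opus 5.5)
Your proposal is correct and is exactly the paper's argument: apply Proposition~\ref{prop:syz-sn} to $\mathcal{A}=\mathcal{C}(M)$ for the first inequality, and use Lemma~\ref{lem:sn-upper} (via $\left|\mathcal{C}_{B}\right|=n-k$) for the second. Your remark about the standing loopless assumption is also accurate.
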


\begin{proof}
Immediate from Proposition \ref{prop:syz-sn} and Lemma \ref{lem:sn-upper}. 
\end{proof}

\begin{cor}
\label{cor:sp-cl}If $\mathcal{A}$ is an independent set of $\mathrm{Syz}(M)$,
then $\mathrm{sp}^{\infty}(\mathcal{A})\subseteq\mathrm{cl}_{\mathrm{Syz}(M)}(\mathcal{A})$. 
\end{cor}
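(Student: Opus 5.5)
Let $\mathcal{A}$ be independent in $\mathrm{Syz}(M)$ and let $C\in\mathrm{sp}^{\infty}(\mathcal{A})$. We must show $C\in\mathrm{cl}_{\mathrm{Syz}(M)}(\mathcal{A})$. If $C\in\mathcal{A}$ there is nothing to prove. Otherwise, since $\mathcal{A}$ is independent, it suffices to show that $\mathcal{A}\cup\{C\}$ is dependent in $\mathrm{Syz}(M)$. In a matroid, adjoining an element to an independent set yields a dependent set exactly when the element lies in the closure.

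To prove dependence, put $\mathcal{A}':=\mathcal{A}\cup\{C\}$ and compare $\mathrm{sn}(\mathcal{A}')$ with $|\mathcal{A}'|=|\mathcal{A}|+1$. The set $\mathcal{A}$ itself Gaussian spans $\mathcal{A}'$. Indeed, $\mathcal{A}\subseteq\mathrm{sp}^{\infty}(\mathcal{A})$ by Proposition \ref{prop:closure}(\ref{enu:closure1}), and $C\in\mathrm{sp}^{\infty}(\mathcal{A})$ by hypothesis. Hence
\[
\mathrm{sn}(\mathcal{A}')\le|\mathcal{A}|<|\mathcal{A}'|,
\]
so $\mathcal{A}'\in\mathfrak{X}$. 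Since $\mathfrak{X}\subseteq\Phi^{\infty}(\mathfrak{X})$, the set $\mathcal{A}'$ is dependent in $\mathrm{Syz}(M)$.

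Combining the two steps: $\mathcal{A}$ is independent and $\mathcal{A}\cup\{C\}$ is dependent, so $C\in\mathrm{cl}_{\mathrm{Syz}(M)}(\mathcal{A})$. As $C$ was arbitrary, $\mathrm{sp}^{\infty}(\mathcal{A})\subseteq\mathrm{cl}_{\mathrm{Syz}(M)}(\mathcal{A})$.

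There is no real obstacle here. The only point needing care is the case $C\in\mathcal{A}$, which is trivial, together with the observation that seed members are dependent in the generated matroid.

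Alternatively, one can apply Proposition \ref{prop:syz-sn} directly. It gives $r(\mathcal{A}')\le\mathrm{sn}(\mathcal{A}')\le|\mathcal{A}|=r(\mathcal{A})$. Since also $r(\mathcal{A}')\ge r(\mathcal{A})$, the two ranks are equal, which is exactly the closure statement.
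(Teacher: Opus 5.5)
Your proof is correct and rests on the same idea as the paper's. The key step is that $\mathcal{A}\cup\{C\}$ is Gaussian spanned by at most $|\mathcal{A}|$ circuits, so it is dependent in $\mathrm{Syz}(M)$ (equivalently, it has rank at most $r_{\mathrm{Syz}(M)}(\mathcal{A})$). Your version is slightly leaner: you use $\mathcal{A}$ itself as the spanning system, whereas the paper passes through a minimum-size spanning system of $\mathcal{A}$ and needs Proposition~\ref{prop:closure}(\ref{enu:closure2}) and (\ref{enu:closure3}) to get $\mathrm{sn}(\mathcal{A}\cup\{C\})\le\mathrm{sn}(\mathcal{A})$.
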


\begin{proof}
Let $C\in\mathrm{sp}^{\infty}(\mathcal{A})$. We show that $\ensuremath{r_{\mathrm{Syz}(M)}(\mathcal{A}\cup\{C\})\leq r_{\mathrm{Syz}(M)}(\mathcal{A})}$.
Let $\mathcal{A}'$ be a circuit system of least cardinality with
$\mathcal{A}\subseteq\mathrm{sp}^{\infty}(\mathcal{A}')$, so that
$\left|\mathcal{A}'\right|=\mathrm{sn}(\mathcal{A})$. Proposition~\ref{prop:closure}(\ref{enu:closure2})
and (\ref{enu:closure3}) give $\mathrm{sp}^{\infty}(\mathcal{A})\subseteq\mathrm{sp}^{\infty}(\mathcal{A}')$,
so $\mathcal{A}\cup\{C\}\subseteq\mathrm{sp}^{\infty}(\mathcal{A}')$
and $\mathrm{sn}(\mathcal{A}\cup\{C\})\le\left|\mathcal{A}'\right|=\mathrm{sn}(\mathcal{A})$.
Since $\mathcal{A}$ is independent in $\mathrm{Syz}(M)$, we have
$\left|\mathcal{A}\right|=r_{\mathrm{Syz}(M)}(\mathcal{A})$, and
Propositions \ref{prop:syz-sn} and \ref{prop:sn-basic}(\ref{enu:sn-basic1})
give 
\[
r_{\mathrm{Syz}(M)}(\mathcal{A}\cup\{C\})\le\mathrm{sn}(\mathcal{A}\cup\{C\})\le\mathrm{sn}(\mathcal{A})\le\left|\mathcal{A}\right|=r_{\mathrm{Syz}(M)}(\mathcal{A}).
\]
 Thus $C\in\mathrm{cl}_{\mathrm{Syz}(M)}(\mathcal{A})$, and the inclusion
is proved.
\end{proof}

\begin{prop}
\label{prop:sn-nullity}If $\mathcal{A}$ is a circuit system of $M$,
then $\mathrm{sn}(\mathcal{A})\le\mathrm{nullity}(\bigcup\mathcal{A})$.
\end{prop}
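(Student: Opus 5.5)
Put $D=\bigcup\mathcal{A}$ and $N=M|D$. The plan is to exhibit a circuit system of size $\mathrm{nullity}(D)$ that Gaussian spans $\mathcal{A}$, namely a fundamental circuit system of $N$ regarded inside $M$. Circuits of $N$ are exactly the circuits of $M$ included in $D$, and $\mathcal{A}\subseteq\mathcal{C}(N)$. If $\mathcal{A}=\emptyset$ both sides are $0$, so assume $\mathcal{A}\neq\emptyset$. Since $M$ is loopless, so is $N$. Choose a base $B_{N}$ of $N$; then $\mathcal{C}_{B_{N}}$, taken in $N$, has exactly $|D|-r(D)=\mathrm{nullity}(D)$ members.

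The key step is to show that $\mathcal{C}(N)\subseteq\mathrm{sp}^{\infty}_{M}(\mathcal{C}_{B_{N}})$. Proposition \ref{prop:span} applied to the matroid $N$ gives $\mathcal{C}(N)=\mathrm{sp}^{\infty}_{N}(\mathcal{C}_{B_{N}})$. We must then check that the Gaussian process is intrinsic to the restriction. For $(\mathcal{A}',C)$ with all circuits in $\mathcal{C}(N)$, every $X_{i}$ stays inside $D$, as in the proof of Lemma \ref{lem:gc-union}. Every operation $\lfloor\cdot\rfloor$ is applied to a subset $Y\subseteq D$, and $\lfloor Y\rfloor_{M}=\lfloor Y\rfloor_{N}$ because $M|Y=N|Y$. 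The choices of $\lambda_{j}$, $c_{j}$ and merges are also the same, so an execution in $N$ is an execution in $M$ with identical sets, and the success criteria coincide. By induction on $i$, $\mathrm{sp}_{N}^{i}(\mathcal{A}')=\mathrm{sp}_{M}^{i}(\mathcal{A}')$ for $\mathcal{A}'\subseteq\mathcal{C}(N)$. Here one uses Lemma \ref{lem:gc-union} in $M$: Gaussian combinations in $M$ of circuits in $D$ lie in $D$, so the iterates stay in $\mathcal{C}(N)$.

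Hence $\mathcal{A}\subseteq\mathcal{C}(N)=\mathrm{sp}^{\infty}_{M}(\mathcal{C}_{B_{N}})$, so $\mathcal{C}_{B_{N}}$ Gaussian spans $\mathcal{A}$ in $M$. Therefore $\mathrm{sn}(\mathcal{A})\le|\mathcal{C}_{B_{N}}|=\mathrm{nullity}(D)$.

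The main obstacle is the restriction-invariance of the Gaussian process. It is routine, but it must be stated carefully, because $\lfloor\cdot\rfloor$ is defined relative to the ambient matroid and the process is nondeterministic. The remark that $\lfloor Y\rfloor$ depends only on $M|Y$, namely the loops and coloops of $M|Y$, settles it.
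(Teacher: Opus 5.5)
Your proposal is correct and takes essentially the same approach as the paper. You restrict to $N=M|\bigcup\mathcal{A}$ and take the fundamental circuit system of a base of $N$, which has $\mathrm{nullity}(\bigcup\mathcal{A})$ members. You then transfer Proposition~\ref{prop:span} from $N$ to $M$ using $\lfloor Y\rfloor_{N}=\lfloor Y\rfloor_{M}$ for $Y\subseteq\bigcup\mathcal{A}$. The only difference is that your induction over $\mathrm{sp}^{i}$ is unnecessary: Proposition~\ref{prop:span} already places every circuit of $N$ in $\mathrm{sp}_{N}(\mathcal{C}_{B_{N}})$, so a single round of the transfer suffices.
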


\begin{proof}
Put $X=\bigcup\mathcal{A}$ and $N=M|X$. For a base $B$ of $N$,
let $\mathcal{C}'_{B}$ be its fundamental circuit system in $N$.
Since $\left|\mathcal{C}'_{B}\right|=\left|X\right|-r(X)=\mathrm{nullity}(\bigcup\mathcal{A})$,
it suffices to show that $\mathcal{C}'_{B}$ Gaussian spans $\mathcal{A}$.
For $D\subseteq X$, we have $N|D=M|D$, so $\left\lfloor D\right\rfloor _{N}=\left\lfloor D\right\rfloor _{M}$.
Hence, for a circuit system $\mathcal{A}'$ and a circuit $C$, both
of $N$, the Gaussian processes for $\left(\mathcal{A}',C\right)$
in $N$ and in $M$ coincide, merges included, so a Gaussian combination
in $N$ of a circuit system of $N$ is one in $M$. Every member of
$\mathcal{A}$ is a circuit of $N$, so each is a Gaussian combination
of $\mathcal{C}'_{B}$ in $N$ by Proposition \ref{prop:span} applied
to $N$, hence in $M$. Therefore $\mathcal{C}'_{B}$ Gaussian spans
$\mathcal{A}$.
\end{proof}

\begin{cor}
\label{cor:syz-sn-nullity}If $\mathcal{A}$ is a circuit system of
$M$, then 
\[
r_{\mathrm{Syz}(M)}(\mathcal{A})\le\mathrm{sn}(\mathcal{A})\le\mathrm{nullity}\bigl(\bigcup\mathcal{A}\bigr).
\]
\end{cor}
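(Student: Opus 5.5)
This is an immediate combination of two earlier results, so the plan is simply to chain them. Let $\mathcal{A}$ be a circuit system of $M$.

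For the first inequality, $r_{\mathrm{Syz}(M)}(\mathcal{A})\le\mathrm{sn}(\mathcal{A})$, I will cite Proposition~\ref{prop:syz-sn} directly. Its proof uses only that an independent subsystem $\mathcal{A}'\subseteq\mathcal{A}$ avoids the seed $\mathfrak{X}\subseteq\Phi^{\infty}(\mathfrak{X})$, together with monotonicity of $\mathrm{sn}$ from Proposition~\ref{prop:sn-basic}(\ref{enu:sn-basic2}). No further work is needed there.

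For the second inequality, $\mathrm{sn}(\mathcal{A})\le\mathrm{nullity}\bigl(\bigcup\mathcal{A}\bigr)$, I will cite Proposition~\ref{prop:sn-nullity}. Recall its argument: put $X=\bigcup\mathcal{A}$ and fix a base $B$ of $M|X$. The fundamental circuit system of $B$ in $M|X$ has exactly $|X|-r(X)$ members, and by Proposition~\ref{prop:span} applied to $M|X$ it Gaussian spans every circuit of $M|X$, in particular every member of $\mathcal{A}$. Gaussian processes in $M|X$ and in $M$ agree because $\lfloor D\rfloor_{M|X}=\lfloor D\rfloor_{M}$ for $D\subseteq X$, so this spanning also holds in $M$.

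Concatenating the two inequalities gives the displayed chain. There is no real obstacle. The one point I will check is the standing hypothesis of this section that $M$ is loopless, which is used in identifying $\lfloor\cdot\rfloor$ with the union of included circuits. Both cited propositions were proved under that same assumption, so the corollary holds in the same setting.
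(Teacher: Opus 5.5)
Your proposal is correct and matches the paper's proof, which simply chains Proposition~\ref{prop:syz-sn} and Proposition~\ref{prop:sn-nullity}. Your recap of both arguments, and your remark that the section's standing looplessness hypothesis applies to both, are accurate.
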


\begin{proof}
Combine Propositions \ref{prop:syz-sn} and \ref{prop:sn-nullity}.
\end{proof}

The combinatorial derived matroid of \cite{CDM} is generated by the
seed $\{\mathcal{A}\subseteq\mathcal{C}(M):\left|\mathcal{A}\right|>\mathrm{nullity}(\bigcup\mathcal{A})\}$,
which is included in our seed $\mathfrak{X}$ by Proposition~\ref{prop:sn-nullity}.
Since $\Phi^{\infty}$ is not monotone (Remark \ref{rem:not-monotone}),
this inclusion implies no comparison between the generated matroids. 
\begin{prop}
\label{prop:simple}$\mathrm{Syz}(M)$ is simple.
\end{prop}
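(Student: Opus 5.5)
By Proposition~\ref{prop:simple-gen}(\ref{enu:girth-1}), $\mathrm{Syz}(M)=\mathrm{Gen}(\mathfrak{X})$ is simple if and only if $\mathrm{girth}(\mathfrak{X})\ge3$. So the plan is to show that no circuit system with at most two members lies in $\mathfrak{X}$, that is, $\mathrm{sn}(\mathcal{A})\ge\left|\mathcal{A}\right|$ whenever $\left|\mathcal{A}\right|\le2$. Combined with Proposition~\ref{prop:sn-basic}(\ref{enu:sn-basic1}), this means $\mathrm{sn}(\mathcal{A})=\left|\mathcal{A}\right|$ for all such $\mathcal{A}$. Recall that $M$ is loopless here, and the empty system is excluded from $\mathfrak{X}$ by definition.

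\emph{Singletons.} Let $\mathcal{A}=\{C\}$. Since $\mathrm{sp}^{\infty}(\emptyset)=\emptyset$ (the Gaussian process for an empty system never succeeds), no system of size $0$ Gaussian spans $\{C\}$. Hence $\mathrm{sn}(\{C\})=1$, and $\{C\}\notin\mathfrak{X}$.

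\emph{Pairs.} Let $\mathcal{A}=\{C_{1},C_{2}\}$ with $C_{1}\ne C_{2}$, and suppose $\mathrm{sn}(\mathcal{A})\le1$. By the singleton case, $\mathcal{A}$ would have to be Gaussian spanned by a single circuit $\{C\}$. The proof of Lemma~\ref{lem:sn-12} notes that the Gaussian process for a one-member system stops at the first step, so $\mathrm{sp}(\{C\})=\{C\}$ and therefore $\mathrm{sp}^{\infty}(\{C\})=\{C\}$. This cannot include two distinct circuits, a contradiction. Hence $\mathrm{sn}(\mathcal{A})=2$ and $\mathcal{A}\notin\mathfrak{X}$.

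It follows that $\mathrm{girth}(\mathfrak{X})\ge3$, or $\mathfrak{X}=\emptyset$ and the girth is $\infty$, in which case $\mathrm{Syz}(M)$ is free and hence simple. In either case Proposition~\ref{prop:simple-gen}(\ref{enu:girth-1}) gives the result. The only step needing care is confirming that $\mathrm{sp}(\{C\})=\{C\}$ and $\mathrm{sp}(\emptyset)=\emptyset$. Both follow directly from the termination and success rules: with $\left|\Lambda_{1}\right|\le1$ the process stops immediately, and it succeeds only if $C$ equals the unique $X_{i}$. The remark in the text preceding the definition of $\mathrm{Syz}$ for matroids with loops then handles the general case.
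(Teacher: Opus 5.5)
Your proof is correct and takes the same approach as the paper: both use $\mathrm{sp}^{\infty}(\emptyset)=\emptyset$ and $\mathrm{sp}^{\infty}(\{C\})=\{C\}$ to rule out members of $\mathfrak{X}$ of size at most $2$, and then apply Proposition~\ref{prop:simple-gen}(\ref{enu:girth-1}). The only difference is organization: the paper argues that $\mathrm{sn}(\mathcal{A})\le1$ forces $|\mathcal{A}|\le\mathrm{sn}(\mathcal{A})$, so members of $\mathfrak{X}$ have $\mathrm{sn}\ge2$ and hence size at least $3$, whereas you treat the cases $|\mathcal{A}|=1$ and $|\mathcal{A}|=2$ separately.
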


\begin{proof}
As in the proof of Lemma \ref{lem:sn-12}, we have $\mathrm{sp}^{\infty}(\{C\})=\{C\}$
for $C\in\mathcal{C}(M)$, and $\mathrm{sp}^{\infty}(\emptyset)=\emptyset$.
Hence, if $\mathrm{sn}(\mathcal{A})\leq1$, then $\mathcal{A}$ is
included in a circuit system of cardinality $\mathrm{sn}(\mathcal{A})$,
so $|\mathcal{A}|\leq\mathrm{sn}(\mathcal{A})$. For $\mathcal{A}\in\mathfrak{X}$,
we have $|\mathcal{A}|>\mathrm{sn}(\mathcal{A})$, so $\mathrm{sn}(\mathcal{A})\geq2$
by the above and $|\mathcal{A}|\geq3$. Thus $\mathrm{girth}(\mathfrak{X})\geq3$,
and $\mathrm{Syz}(M)$ is simple by Proposition~\ref{prop:simple-gen}(\ref{enu:girth-1}).
\end{proof}

\begin{prop}
\label{prop:sn-split}Let $M=M_{1}\oplus M_{2}$. For a circuit system
$\mathcal{A}$ of $M$, put $\mathcal{A}_{l}=\mathcal{A}\cap\mathcal{C}(M_{l})$
for $l=1,2$. Then 
\[
\mathrm{sp}^{\infty}(\mathcal{A})=\mathrm{sp}^{\infty}(\mathcal{A}_{1})\sqcup\mathrm{sp}^{\infty}(\mathcal{A}_{2})\quad\text{and}\quad\mathrm{sn}(\mathcal{A})=\mathrm{sn}(\mathcal{A}_{1})+\mathrm{sn}(\mathcal{A}_{2}).
\]
\end{prop}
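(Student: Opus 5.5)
The plan is to prove first a one-step statement,
\[
\mathrm{sp}(\mathcal{A})=\mathrm{sp}(\mathcal{A}_{1})\sqcup\mathrm{sp}(\mathcal{A}_{2}),
\]
and then iterate it. The iteration is straightforward: if the one-step identity holds for every circuit system, then applying it to $\mathrm{sp}^{i-1}(\mathcal{A})$ gives $\mathrm{sp}^{i}(\mathcal{A})=\mathrm{sp}^{i}(\mathcal{A}_{1})\sqcup\mathrm{sp}^{i}(\mathcal{A}_{2})$ by induction. Here we use that, by Lemma~\ref{lem:gc-union}, $\mathrm{sp}^{i-1}(\mathcal{A}_{l})\subseteq\mathcal{C}(M_{l})$, since every Gaussian combination of circuits of $M_{l}$ lies inside $E(M_{l})$. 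Taking unions then gives the first identity. Disjointness is automatic, because $\mathcal{C}(M)=\mathcal{C}(M_{1})\sqcup\mathcal{C}(M_{2})$.

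For the one-step identity, the inclusion $\supseteq$ is Proposition~\ref{prop:sp-basic}(\ref{enu:sp-basic2}). For $\subseteq$, take $C\in\mathrm{sp}(\mathcal{A})$; say $C\subseteq E(M_{1})$. Fix a successful execution of the process for $(\mathcal{A},C)$. The key observation is that the $X_{i}$ never mix across the two components. A Gaussian operation at $c_{j}$ acts on $X_{i}$ only if $c_{j}\in X_{i}\cap X_{\lambda_{j}}$, which forces both to lie in the same $E(M_{l})$. Moreover, $\lfloor\cdot\rfloor$ of a subset of $E(M_{l})$ stays in $E(M_{l})$, and merges only identify equal sets, hence sets in the same component. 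By induction on steps, each $X_{i}$ stays inside the component of its initial circuit.

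Consider now the subsequence of steps whose pivot has index originating in $\mathcal{A}_{1}$. Replaying just those steps, with the same choices and representatives, gives an execution for $(\mathcal{A}_{1},C)$ in which the $X_{i}$ with $i$ from $\mathcal{A}_{1}$ take the same values. The final success $C=X_{i}$ forces $i$ to come from $\mathcal{A}_{1}$. The one subtle point, and the step I expect to need the most care, is the stopping rule: the restricted process must not stop early with $|\Lambda_{j}|\le1$ before reaching $C$. If at some point only one $\mathcal{A}_{1}$-index $i$ remains, then every later step that modifies $X_{i}$ needs a pivot in $E(M_{1})$ with a different surviving index, and none exists. So $X_{i}$ is already final, it equals $C$ by success, and the restricted process succeeds at that point as well. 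With this, $C\in\mathrm{sp}(\mathcal{A}_{1})$.

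For the spanning numbers, the inequality $\le$ is Proposition~\ref{prop:sn-basic}(\ref{enu:sn-basic3}). For $\ge$, let $\mathcal{A}'$ be of least cardinality with $\mathcal{A}\subseteq\mathrm{sp}^{\infty}(\mathcal{A}')$. By the first identity,
\[
\mathcal{A}_{l}\subseteq\mathrm{sp}^{\infty}(\mathcal{A}')\cap\mathcal{C}(M_{l})=\mathrm{sp}^{\infty}(\mathcal{A}'_{l}),
\]
so $\mathrm{sn}(\mathcal{A}_{l})\le|\mathcal{A}'_{l}|$. Summing gives $\mathrm{sn}(\mathcal{A}_{1})+\mathrm{sn}(\mathcal{A}_{2})\le|\mathcal{A}'|=\mathrm{sn}(\mathcal{A})$.
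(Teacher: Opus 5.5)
Your proposal is correct and follows essentially the same route as the paper's proof. Like the paper, you show that each $X_i$ stays inside the summand of its initial circuit, so merges never cross summands. You then restrict a successful execution to the steps whose pivot comes from the relevant summand to get $\mathrm{sp}(\mathcal{A})=\mathrm{sp}(\mathcal{A}_1)\sqcup\mathrm{sp}(\mathcal{A}_2)$, iterate, and split a minimal Gaussian spanning system $\mathcal{A}'$ into $\mathcal{A}'\cap\mathcal{C}(M_1)$ and $\mathcal{A}'\cap\mathcal{C}(M_2)$ for the $\mathrm{sn}$ equality. Your explicit check that the restricted execution cannot halt prematurely under the rule $|\Lambda_j|\le 1$ fills in a detail the paper passes over with ``this execution therefore succeeds.''
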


\begin{proof}
Every circuit of $M$ is included in $E(M_{1})$ or in $E(M_{2})$.
In any execution of the Gaussian process for $(\mathcal{A},C)$, each
$X_{i}$ stays within the $E(M_{l})$ that includes $C_{i}$: initially
$X_{i}=C_{i}$, and a step changes $X_{i}$ only when $c_{j}\in X_{i}\cap X_{\lambda_{j}}$,
in which case $X_{i}$ and $X_{\lambda_{j}}$ are included in the
same $E(M_{l})$, and so is the new $X_{i}$, being included in their
union. In particular, no merge involves indices from different summands.

Let $C\in\mathcal{C}(M_{l})$ be a Gaussian combination of $\mathcal{A}$
and consider a successful execution for $\left(\mathcal{A},C\right)$.
The choices made at the steps with $C_{\lambda_{j}}\in\mathcal{A}_{l}$
form an execution for $\left(\mathcal{A}_{l},C\right)$, along which
each $X_{i}$ with $C_{i}\in\mathcal{A}_{l}$ takes the same values
as before. This execution therefore succeeds, so $\ensuremath{C\in\mathrm{sp}(\mathcal{A}_{l})}$.
Together with Proposition~\ref{prop:sp-basic}(\ref{enu:sp-basic2}),
this gives $\mathrm{sp}(\mathcal{A})=\mathrm{sp}(\mathcal{A}_{1})\sqcup\mathrm{sp}(\mathcal{A}_{2})$,
and the first assertion follows by iteration.

Let $\mathcal{A}'$ be a circuit system of least cardinality that
Gaussian spans $\mathcal{A}$. By the first assertion, $\mathcal{A}'\cap\mathcal{C}(M_{l})$
Gaussian spans $\mathcal{A}_{l}$, and $\mathcal{A}'$ is the disjoint
union of these two systems, so $\mathrm{sn}(\mathcal{A}_{1})+\mathrm{sn}(\mathcal{A}_{2})\le\left|\mathcal{A}'\right|=\mathrm{sn}(\mathcal{A})$;
the reverse inequality follows from Proposition~\ref{prop:sn-basic}(\ref{enu:sn-basic3}).
\end{proof}

\begin{cor}
\label{cor:Syz-split}Let $M=M_{1}\oplus M_{2}$. Then $\mathrm{Syz}(M)=\mathrm{Syz}(M_{1})\oplus\mathrm{Syz}(M_{2})$. 
\end{cor}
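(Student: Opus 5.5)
We must show that $\mathrm{Syz}(M)=\mathrm{Gen}(\mathfrak{X})$ coincides with $\mathrm{Gen}(\mathfrak{X}_{1})\oplus\mathrm{Gen}(\mathfrak{X}_{2})$. Here $\mathfrak{X}_{l}$ is the seed defining $\mathrm{Syz}(M_{l})$, and $\mathcal{C}(M)=\mathcal{C}(M_{1})\sqcup\mathcal{C}(M_{2})$. The plan has two parts: first describe the seed $\mathfrak{X}$ of $M$ in terms of the $\mathfrak{X}_{l}$, and then show that $\Phi^{\infty}$ respects this decomposition.

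\textbf{Step 1: the seed.} By Proposition~\ref{prop:sn-split}, for $\mathcal{A}\subseteq\mathcal{C}(M)$ we have $\mathrm{sn}(\mathcal{A})=\mathrm{sn}(\mathcal{A}_{1})+\mathrm{sn}(\mathcal{A}_{2})$. Also $|\mathcal{A}|=|\mathcal{A}_{1}|+|\mathcal{A}_{2}|$, and $|\mathcal{A}_{l}|\ge\mathrm{sn}(\mathcal{A}_{l})$ by Proposition~\ref{prop:sn-basic}(\ref{enu:sn-basic1}). Hence $\mathcal{A}\in\mathfrak{X}$ if and only if $\mathcal{A}_{1}\in\mathfrak{X}_{1}$ or $\mathcal{A}_{2}\in\mathfrak{X}_{2}$.

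Note that $\mathrm{sn}$ of a circuit system of $M_{l}$ is the same whether computed in $M_{l}$ or in $M$. This holds because the Gaussian process for $M_{l}$-data coincides in $M_{l}$ and $M$: since $\lfloor D\rfloor_{M}=\lfloor D\rfloor_{M_{l}}$ for $D\subseteq E(M_{l})$, the argument is as in the proof of Proposition~\ref{prop:sn-nullity}. A minimal spanning system can likewise be taken inside $\mathcal{C}(M_{l})$, by the first assertion of Proposition~\ref{prop:sn-split}.

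\textbf{Step 2: identifying the dependent sets.} Let $\mathfrak{D}_{l}=\Phi^{\infty}(\mathfrak{X}_{l})$, computed on $\mathcal{C}(M_{l})$. Let $\mathfrak{D}$ be the dependent-set collection of $\mathrm{Gen}(\mathfrak{X}_{1})\oplus\mathrm{Gen}(\mathfrak{X}_{2})$, namely the sets $\mathcal{A}$ with $\mathcal{A}_{1}\in\mathfrak{D}_{1}$ or $\mathcal{A}_{2}\in\mathfrak{D}_{2}$. We prove $\Phi^{\infty}(\mathfrak{X})=\mathfrak{D}$ by two inclusions.

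For $\supseteq$, note that $\Phi^{\infty}(\mathfrak{X})$ is a matroid's dependent collection containing $\mathfrak{X}$. Its restriction to $\mathcal{C}(M_{l})$, namely $\{\mathcal{A}\subseteq\mathcal{C}(M_{l}):\mathcal{A}\in\Phi^{\infty}(\mathfrak{X})\}$, is a dependent-set collection, by (D1) and (D2) restricted, and it contains $\mathfrak{X}_{l}$. It then suffices to show that $\Phi^{\infty}(\mathfrak{X}_{l})$ is contained in any $\Phi$-closed collection $\mathfrak{Y}$ on $\mathcal{C}(M_{l})$ with $\mathfrak{X}_{l}\subseteq\mathfrak{Y}$. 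Because $\Phi$ is not monotone (Remark~\ref{rem:not-monotone}), this needs care. I would prove by induction on $i$ that $\Phi^{i}(\mathfrak{X}_{l})\subseteq\mathfrak{Y}$ \emph{and} that every $X_{1}\cap X_{2}\notin\Phi^{i}(\mathfrak{X}_{l})$ arising is handled by (D2) in $\mathfrak{Y}$. The difficulty is the case where $X_{1}\cap X_{2}\notin\Phi^{i}(\mathfrak{X}_{l})$ but $X_{1}\cap X_{2}\in\mathfrak{Y}$, in which case (D2) in $\mathfrak{Y}$ gives nothing. This is the expected main obstacle. I would circumvent it by a direct comparison of the two iterations instead of minimality, as described next.

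\textbf{Direct comparison.} Show by induction on $i$ that
\[
\Phi^{i}(\mathfrak{X})=\{\mathcal{A}:\mathcal{A}_{1}\in\Phi^{i}(\mathfrak{X}_{1})\text{ or }\mathcal{A}_{2}\in\Phi^{i}(\mathfrak{X}_{2})\}.
\]
The case $i=0$ is Step 1. For the inductive step, suppose $\mathfrak{Y}$ has the form $\{\mathcal{A}:\mathcal{A}_{1}\in\mathfrak{Y}_{1}\text{ or }\mathcal{A}_{2}\in\mathfrak{Y}_{2}\}$ with each $\mathfrak{Y}_{l}$ upward closed in $\mathcal{C}(M_{l})$. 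One checks that $\Phi(\mathfrak{Y})$ has the same form with $\Phi(\mathfrak{Y}_{l})$ (for $i=0$, apply uc first; after that uc does nothing new). Take $X,X'\in\mathfrak{Y}$ with $X\cap X'\notin\mathfrak{Y}$ and $e\in X\cap X'$, say $e\in\mathcal{C}(M_{1})$. Then $(X\cap X')_{1}\notin\mathfrak{Y}_{1}$. If $X_{1},X'_{1}\in\mathfrak{Y}_{1}$, then $((X\cup X')\backslash\{e\})_{1}$ is an elimination set of $\mathfrak{Y}_{1}$. Otherwise, say $X_{1}\notin\mathfrak{Y}_{1}$; then $X_{2}\in\mathfrak{Y}_{2}$, so $(X\cup X')\backslash\{e\}$ already contains $X_{2}\in\mathfrak{Y}_{2}$. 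Either way the new set has the stated form. Conversely, elimination sets in $\mathfrak{Y}_{l}$ lift by adjoining a fixed set in the other summand: adjoin the empty part, and use upward closure. Taking $i\to\infty$ gives $\Phi^{\infty}(\mathfrak{X})=\mathfrak{D}$, that is, $\mathrm{Syz}(M)=\mathrm{Syz}(M_{1})\oplus\mathrm{Syz}(M_{2})$.
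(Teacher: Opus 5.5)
Your direct comparison, inducting on $i$ to show that $\Phi^{i}(\mathfrak{X})$ consists exactly of the systems with some part in $\Phi^{i}(\mathfrak{X}_{l})$, is correct and is essentially the paper's proof: the base case comes from Proposition~\ref{prop:sn-split}, the forward step splits an elimination set according to which summand contains the removed circuit, and the converse lifts eliminations from a summand by adjoining an empty part (you were also right to abandon the minimality route, which fails because $\Phi^{\infty}$ is not monotone). Two small points. First, your case analysis never actually uses upward closedness of the $\mathfrak{Y}_{l}$, since the $\mathrm{uc}$ built into $\Phi$ suffices, and at $i=0$ that property holds anyway because $\mathfrak{X}_{l}$ is upward closed by Proposition~\ref{prop:sn-basic}(\ref{enu:sn-basic1}),(\ref{enu:sn-basic3}); ``apply uc first'' must not mean replacing the seed by its upward closure, which can change $\Phi^{\infty}$ in general. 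Second, the lifting step needs $\emptyset\notin\Phi^{i}(\mathfrak{X}_{2})$, which holds because girth is preserved (Proposition~\ref{prop:seed}).
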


\begin{proof}
Write $\mathfrak{X}_{l}$ for the seed generating $\mathrm{Syz}(M_{l})$.
Identify $\mathcal{C}(M)$ with $\mathcal{C}(M_{1})\sqcup\mathcal{C}(M_{2})$,
and call $\mathcal{A}_{l}=\mathcal{A}\cap\mathcal{C}(M_{l})$ the
$l$-th part of a circuit system $\mathcal{A}$ of $M$. A circuit
system of $M$ is dependent in $\mathrm{Syz}(M_{1})\oplus\mathrm{Syz}(M_{2})$
if and only if one of its parts is dependent in the corresponding
summand, so it is enough to show by induction on $i$ that $\mathcal{A}\in\Phi^{i}(\mathfrak{X})$
if and only if $\mathcal{A}_{l}\in\Phi^{i}(\mathfrak{X}_{l})$ for
some $l$.

For $i=0$, the first assertion of Proposition \ref{prop:sn-split}
and the proof of Proposition \ref{prop:sn-nullity} show that $\mathrm{sn}(\mathcal{A}_{l})$
is the same whether computed in $M$ or in $M_{l}$, and the second
assertion of Proposition \ref{prop:sn-split} gives $\left|\mathcal{A}\right|-\mathrm{sn}(\mathcal{A})=\left(\left|\mathcal{A}_{1}\right|-\mathrm{sn}(\mathcal{A}_{1})\right)+\left(\left|\mathcal{A}_{2}\right|-\mathrm{sn}(\mathcal{A}_{2})\right)$.
Each term on the right is nonnegative by Proposition~\ref{prop:sn-basic}(\ref{enu:sn-basic1}),
so $\mathcal{A}\in\mathfrak{X}$ if and only if $\mathcal{A}_{l}\in\mathfrak{X}_{l}$
for some $l$. 

Suppose the claim holds for $i$ and let $\mathcal{A}\in\Phi^{i+1}(\mathfrak{X})$.
Then $\mathcal{A}$ includes a member $\mathcal{T}$ of $\mathrm{ex}(\Phi^{i}(\mathfrak{X}))$.
We show that $\mathcal{T}_{l}\in\Phi^{i+1}(\mathfrak{X}_{l})$ for
some $l$; since $\mathcal{A}_{l}$ includes $\mathcal{T}_{l}$ and
$\Phi^{i+1}(\mathfrak{X}_{l})$ is upward closed, this gives $\mathcal{A}_{l}\in\Phi^{i+1}(\mathfrak{X}_{l})$.
If $\mathcal{T}\in\Phi^{i}(\mathfrak{X})$, then $\mathcal{T}_{l}\in\Phi^{i}(\mathfrak{X}_{l})\subseteq\Phi^{i+1}(\mathfrak{X}_{l})$
for some $l$ by the induction hypothesis. Otherwise $\mathcal{T}=\left(\mathcal{S}\cup\mathcal{S}'\right)\backslash\{C\}$
with $\mathcal{S},\mathcal{S}'\in\Phi^{i}(\mathfrak{X})$, $\mathcal{S}\cap\mathcal{S}'\notin\Phi^{i}(\mathfrak{X})$,
and $C\in\mathcal{S}\cap\mathcal{S}'$. The circuit $C$ lies in $\mathcal{C}(M_{1})$
or in $\mathcal{C}(M_{2})$, say the former. If $\mathcal{S}_{2}$
or $\mathcal{S}_{2}'$ lies in $\Phi^{i}(\mathfrak{X}_{2})$, then
so does $\mathcal{T}_{2}=\mathcal{S}_{2}\cup\mathcal{S}_{2}'$, since
$\Phi^{i}(\mathfrak{X}_{2})$ is upward closed (by definition for
$i\ge1$, and by Proposition~\ref{prop:sn-basic}(\ref{enu:sn-basic1})
and (\ref{enu:sn-basic3}) for $i=0$). Otherwise $\mathcal{S}_{1},\mathcal{S}_{1}'\in\Phi^{i}(\mathfrak{X}_{1})$
and $\mathcal{S}_{1}\cap\mathcal{S}_{1}'\notin\Phi^{i}(\mathfrak{X}_{1})$
by the induction hypothesis; as $C\in\mathcal{S}_{1}\cap\mathcal{S}_{1}'$,
we get $\mathcal{T}_{1}=\left(\mathcal{S}_{1}\cup\mathcal{S}_{1}'\right)\backslash\{C\}\in\mathrm{ex}(\Phi^{i}(\mathfrak{X}_{1}))\subseteq\Phi^{i+1}(\mathfrak{X}_{1})$.

Conversely, suppose $\mathcal{A}_{l}\in\Phi^{i+1}(\mathfrak{X}_{l})$
for some $l$, say $l=1$, and let $\mathcal{T}\in\mathrm{ex}(\Phi^{i}(\mathfrak{X}_{1}))$
with $\mathcal{T}\subseteq\mathcal{A}_{1}$. If $\mathcal{T}\in\Phi^{i}(\mathfrak{X}_{1})$,
then $\mathcal{T}\in\Phi^{i}(\mathfrak{X})$ by the induction hypothesis.
Otherwise $\mathcal{T}=(\mathcal{S}\cup\mathcal{S}')\backslash\{C\}$
with $\mathcal{S},\mathcal{S}'\in\Phi^{i}(\mathfrak{X}_{1})$, $\mathcal{S}\cap\mathcal{S}'\notin\Phi^{i}(\mathfrak{X}_{1})$,
and $C\in\mathcal{S}\cap\mathcal{S}'$; then $\mathcal{S},\mathcal{S}'\in\Phi^{i}(\mathfrak{X})$
by the induction hypothesis, and likewise $\mathcal{S}\cap\mathcal{S}'\notin\Phi^{i}(\mathfrak{X})$,
since its first part is $\mathcal{S}\cap\mathcal{S}'$ itself and
its second part is $\ensuremath{\emptyset\notin\Phi^{i}(\mathfrak{X}_{2})}$.
In either case, $\mathcal{A}$ includes $\mathcal{T}\in\mathrm{ex}(\Phi^{i}(\mathfrak{X}))$,
so $\mathcal{A}\in\Phi^{i+1}(\mathfrak{X})$.

Taking the union over $i$ completes the proof.
\end{proof}

\begin{prop}
\label{prop:Syz-connected}Let $M$ have no coloops. Then $\mathrm{Syz}(M)$
is connected if and only if $M$ is connected.
\end{prop}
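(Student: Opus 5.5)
The plan is to prove the two directions separately. Throughout, as in the rest of this section, $M$ is loopless, and by hypothesis it also has no coloops.

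\emph{Only if.} Suppose $M$ is disconnected, so $M=M_{1}\oplus M_{2}$ with $E(M_{1})$ and $E(M_{2})$ nonempty. Neither $M_{l}$ has a coloop, since a coloop of $M_{l}$ would be a coloop of $M$. A nonempty matroid without coloops has a circuit, so $\mathcal{C}(M_{1})$ and $\mathcal{C}(M_{2})$ are both nonempty. Corollary~\ref{cor:Syz-split} gives $\mathrm{Syz}(M)=\mathrm{Syz}(M_{1})\oplus\mathrm{Syz}(M_{2})$ with both ground sets nonempty, so $\mathrm{Syz}(M)$ is disconnected.

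\emph{If.} Assume $M$ is connected. The basic tool is a triangle lemma. Let $C_{1}\neq C_{2}$ be circuits with $C_{1}\cap C_{2}\neq\emptyset$ and $\mathrm{nullity}(C_{1}\cup C_{2})=2$.
\begin{itemize}
\item Lemma~\ref{lem:sp-two} supplies a third circuit $C_{3}\subseteq C_{1}\cup C_{2}$ with $C_{3}\in\mathrm{sp}^{\infty}(\{C_{1},C_{2}\})$.
\item Hence $\mathrm{sn}(\{C_{1},C_{2},C_{3}\})\le2<3$, so $\{C_{1},C_{2},C_{3}\}$ lies in the seed $\mathfrak{X}$ and is dependent in $\mathrm{Syz}(M)$.
\item $\mathrm{Syz}(M)$ is simple by Proposition~\ref{prop:simple}, so every two-element subset is independent. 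Therefore $\{C_{1},C_{2},C_{3}\}$ is a circuit of $\mathrm{Syz}(M)$.
\end{itemize}
Consequently $C_{1}$, $C_{2}$, $C_{3}$ lie in one connected component of $\mathrm{Syz}(M)$. The same conclusion holds for any two of the three, by Lemma~\ref{lem:two-circ} applied to that pair.

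\emph{Step 1: one fundamental circuit system lies in a single component.} Fix a base $B$. Form the graph on $\mathcal{C}_{B}$ in which $C(e,B)$ and $C(f,B)$ are adjacent when they meet. I claim this graph is connected.
\begin{itemize}
\item Suppose instead that $\mathcal{C}_{B}=\mathcal{P}\sqcup\mathcal{Q}$ with no member of $\mathcal{P}$ meeting a member of $\mathcal{Q}$, and put $X=\bigcup\mathcal{P}$ and $Y=\bigcup\mathcal{Q}$.
\item Every $b\in B$ lies in some fundamental circuit, since otherwise $b$ is a coloop. So $X\cup Y=E(M)$, and $X\cap Y=\emptyset$.
\item Each member of $\mathcal{P}$ has the form $C(e,B)\subseteq B\cup\{e\}$, so $X\backslash B$ consists of the elements $e$ with $C(e,B)\in\mathcal{P}$, and these lie in $\mathrm{cl}(X\cap B)$. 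Hence $r(X)\le|X\cap B|$, and likewise $r(Y)\le|Y\cap B|$.
\item Therefore $r(X)+r(Y)\le k=r(M)$, so $X$ is a proper nonempty separator. This contradicts connectedness.
\end{itemize}
Adjacent members $C(e,B)$ and $C(f,B)$ are distinct, meet, and have union inside $B\cup\{e,f\}$, so their union has nullity $2$. The triangle lemma then puts them in the same component, and so all of $\mathcal{C}_{B}$ lies in one component $K$.

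\emph{Step 2: propagate to every circuit.} Follow the proof of Proposition~\ref{prop:transform}, passing from $\mathcal{C}_{B}$ to $\mathcal{C}_{B'}$ for an exchange $B'=(B\backslash\{b\})\cup\{a\}$.
\begin{itemize}
\item In cases (\ref{enu:transform1}) and (\ref{enu:transform2}) the circuit is unchanged.
\item In case (\ref{enu:transform3}), $C_{i}'=\lfloor(C^{\star}\cup C_{i})\backslash\{b\}\rfloor$. Here $C^{\star}$ and $C_{i}$ are distinct, meet at $b$, and lie in $B\cup\{a,e_{i}\}$, so their union has nullity $2$. The triangle lemma puts $C_{i}'$ in the component of $C_{i}$, which is $K$.
\end{itemize}
Hence $\mathcal{C}_{B'}\subseteq K$. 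Iterating along the exchange sequence gives $\mathcal{C}_{A}\subseteq K$ for every base $A$. Every circuit $C$ is $C(e,A)$ for some base $A$, as in the proof of Proposition~\ref{prop:span}, so all circuits lie in $K$ and $\mathrm{Syz}(M)$ is connected.

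The main obstacle is the separator argument in Step 1, in particular checking $r(X)\le|X\cap B|$ and that the no-coloop hypothesis gives $X\cup Y=E(M)$. In Step 2 the only care needed is that case (\ref{enu:transform3}) really has $C^{\star}\neq C_{i}$ and nullity exactly $2$, which holds because $C^{\star}\cup C_{i}\subseteq B\cup\{a,e_{i}\}$ with $a\neq e_{i}$.
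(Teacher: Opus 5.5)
Your proof is correct and follows the paper's strategy. Its ingredients are: the three-element circuit $\{C_1,C_2,C_3\}$ of $\mathrm{Syz}(M)$ obtained from Lemma~\ref{lem:sp-two} and simplicity; connectivity within one fundamental circuit system; propagation along base exchanges; and Corollary~\ref{cor:Syz-split} for the converse. There are two minor differences: you prove that the meeting graph on $\mathcal{C}_B$ is connected by a direct separator argument, where the paper cites the connectedness of the fundamental graph $G_B(M)$; and you track each circuit through case~(3) of Proposition~\ref{prop:transform} via the triangle lemma, where the paper applies the first step to every base and notes that consecutive fundamental circuit systems share the circuit $C(b,B_{i-1})=C(b',B_i)$.
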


\begin{proof}
Suppose that $M$ is connected, and write $C\sim C'$ if the circuits
$C$ and $C'$ of $M$ lie in a common connected component of $\mathrm{Syz}(M)$.
We show that $C\sim C'$ for all circuits $C,C'\in\mathcal{C}(M)$.

Let $B$ be a base of $M$, and let $C_{1},C_{2}\in\mathcal{C}_{B}$
be distinct with $C_{1}\cap C_{2}\neq\emptyset$. Write $C_{i}=C(e_{i},B)$
with $e_{i}\in E(M)\backslash B$. Then $\mathrm{nullity}(C_{1}\cup C_{2})=2$
by Lemma \ref{lem:two-circ}, so Lemma \ref{lem:sp-two} shows that
$\mathrm{sp}^{\infty}(\{C_{1},C_{2}\})=\mathcal{C}(M|(C_{1}\cup C_{2}))$
and that this collection contains a circuit $C_{3}$ other than $C_{1}$
and $C_{2}$. By Propositions \ref{prop:syz-sn} and \ref{prop:simple},
the system $\{C_{1},C_{2},C_{3}\}$ has rank $2$ and is a circuit
of $\mathrm{Syz}(M)$. Therefore $C_{1}\sim C_{2}$. 

The fundamental graph $G_{B}(M)$ is the bipartite graph with parts
$B$ and $E(M)\backslash B$ in which $b$ and $e$ are adjacent if
and only if $b\in C(e,B)$. It is connected since $M$ is connected
\cite[Proposition 4.3.2]{Oxley}. Since $\mathcal{C}_{B}$ is indexed
by $E(M)\backslash B$, a path in $G_{B}(M)$ joining two such indices
yields a sequence of members of $\mathcal{C}_{B}$ in which consecutive
ones meet, so $C\sim C'$ for all $C,C'\in\mathcal{C}_{B}$ by the
previous paragraph. As $B$ is arbitrary, the same holds for all bases
of $M$.

Let $B'$ be another base. There is a sequence of bases $B_{0},B_{1},\dots,B_{t}$
of $M$ such that $B_{0}=B$, $B_{t}=B'$, and $\left|B_{i}\bigtriangleup B_{i-1}\right|=2$
for all $i\in[t]$. Putting $\{b\}=B_{i}\backslash B_{i-1}$ and $\{b'\}=B_{i-1}\backslash B_{i}$,
we have $C(b,B_{i-1})=C(b',B_{i})$, so $\mathcal{C}_{B_{i}}\cap\mathcal{C}_{B_{i-1}}\neq\emptyset$
and $C\sim C'$ for all $C\in\mathcal{C}_{B}$ and $C'\in\mathcal{C}_{B'}$.
Since every circuit of $M$ is a fundamental circuit, we have $C\sim C'$
for all $C,C'\in\mathcal{C}(M)$. Thus $\mathrm{Syz}(M)$ is connected. 

Conversely, suppose that $M$ is disconnected, say $M=M_{1}\oplus M_{2}$
with $E(M_{1})$ and $E(M_{2})$ nonempty. Since $M$ has no coloops,
$\mathcal{C}(M_{l})\neq\emptyset$ for $l=1,2$. Corollary \ref{cor:Syz-split}
then shows that $\mathrm{Syz}(M)=\mathrm{Syz}(M_{1})\oplus\mathrm{Syz}(M_{2})$
is disconnected.
\end{proof}

\begin{lem}
\label{lem:sn2-base}Let $\mathcal{A}$ be a circuit system of $M$
with $\left|\mathcal{A}\right|=3$ and $\mathrm{sn}(\mathcal{A})\le2$.
Then $\bigcup\mathcal{A}\subseteq B\cup\{e_{1},e_{2}\}$ for some
base $B$ of $M$ and distinct $e_{1},e_{2}\in E(M)\backslash B$.
\end{lem}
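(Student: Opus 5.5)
The plan is to reduce the statement to a nullity computation: I will show that $X:=\bigcup\mathcal{A}$ satisfies $\mathrm{nullity}(X)=2$, and then build the base $B$ directly from $X$.

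\emph{Step 1: a spanning system with two members.} Choose a circuit system $\mathcal{A}'$ with $|\mathcal{A}'|=\mathrm{sn}(\mathcal{A})\le2$ and $\mathcal{A}\subseteq\mathrm{sp}^{\infty}(\mathcal{A}')$. As in the proof of Proposition~\ref{prop:simple}, we have $\mathrm{sp}^{\infty}(\emptyset)=\emptyset$ and $\mathrm{sp}^{\infty}(\{C\})=\{C\}$. A system with at most one member therefore Gaussian spans at most one circuit, which cannot contain the three members of $\mathcal{A}$. Hence $\mathcal{A}'=\{C_{1},C_{2}\}$ with $C_{1}\neq C_{2}$.

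\emph{Step 2: applying Lemma~\ref{lem:sp-two}.} Put $D=C_{1}\cup C_{2}$. If $\mathrm{nullity}(D)\neq2$, Lemma~\ref{lem:sp-two} gives $\mathrm{sp}^{\infty}(\{C_{1},C_{2}\})=\{C_{1},C_{2}\}$, which has only two members and so cannot include $\mathcal{A}$. Therefore $\mathrm{nullity}(D)=2$ and $\mathcal{A}\subseteq\mathcal{C}(M|D)$, which gives $X\subseteq D$. Nullity is monotone under inclusion: adding one element raises $|\cdot|$ by $1$ and the rank by at most $1$. Hence $\mathrm{nullity}(X)\le2$. Conversely, $X$ includes the three distinct circuits of $\mathcal{A}$, so $\mathrm{nullity}(X)\ge2$ by Lemma~\ref{lem:one-circ}. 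Thus $\mathrm{nullity}(X)=2$.

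\emph{Step 3: constructing the base.} Let $B_{X}$ be a base of $M|X$, and write $X\backslash B_{X}=\{e_{1},e_{2}\}$ with $e_{1}\neq e_{2}$. Extend $B_{X}$ to a base $B$ of $M$. Since $B_{X}$ spans $X$, each $e_{i}$ lies in $\mathrm{cl}(B_{X})$, so $B\cup\{e_{i}\}$ would be dependent; hence $e_{i}\notin B$. It follows that $X\subseteq B_{X}\cup\{e_{1},e_{2}\}\subseteq B\cup\{e_{1},e_{2}\}$ with distinct $e_{1},e_{2}\in E(M)\backslash B$, as required.

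The only step needing real input is Step~2, the appeal to Lemma~\ref{lem:sp-two} to exclude $\mathrm{nullity}(D)\neq2$. Everything else is routine bookkeeping with nullity and bases.
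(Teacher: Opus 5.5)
Your proof is correct and follows the paper's route: you reduce to a spanning pair $\{C_{1},C_{2}\}$ and use Lemma~\ref{lem:sp-two} to force $\mathrm{nullity}(C_{1}\cup C_{2})=2$. The only difference is the last step: you build $B$ by extending a base of $M|X$, whereas the paper cites Lemma~\ref{lem:two-circ}(\ref{enu:two-circ-4}), which also makes $C_{1},C_{2}$ fundamental circuits of $B$. Your detour through $\mathrm{nullity}(X)=2$ is harmless but unnecessary, since the same extension argument applied to $D$ already gives $X\subseteq D\subseteq B\cup\{e_{1},e_{2}\}$.
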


\begin{proof}
Let $\mathcal{A}'$ be a circuit system of least cardinality that
Gaussian spans $\mathcal{A}$, so that $|\mathcal{A}'|=\mathrm{sn}(\mathcal{A})\leq2$.
Since $\mathcal{A}\subseteq\mathrm{sp}^{\infty}(\mathcal{A}')$ and
$|\mathcal{A}|=3$, we have $\mathrm{sp}^{\infty}(\mathcal{A}')\neq\mathcal{A}'$.
As $\mathrm{sp}^{\infty}(\mathcal{A}')=\mathcal{A}'$ whenever $|\mathcal{A}'|\leq1$,
it follows that $\mathcal{A}'=\{D_{1},D_{2}\}$ for distinct circuits
$D_{1}$ and $D_{2}$; put $D=D_{1}\cup D_{2}$. Then Lemma \ref{lem:sp-two}
shows that $\mathrm{nullity}(D)=2$, $D_{1}\cap D_{2}\neq\emptyset$,
and $\mathrm{sp}^{\infty}(\mathcal{A}')=\mathcal{C}(M|D)$. Hence
Lemma \ref{lem:two-circ} gives a base $B$ of $M$ and $e_{1},e_{2}\in E(M)\backslash B$
such that $D_{l}=C(e_{l},B)$ for $l=1,2$. Since $\mathcal{A}\subseteq\mathcal{C}(M|D)$,
we get $\bigcup\mathcal{A}\subseteq D\subseteq B\cup\{e_{1},e_{2}\}$.
\end{proof}

\begin{lem}
\label{lem:size3}Every member of $\Phi^{\infty}(\mathfrak{X})$ of
cardinality $3$ lies in $\mathfrak{X}$. 
\end{lem}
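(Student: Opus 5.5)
The plan is to argue by induction on $i$ that every $3$-element member of $\Phi^{i}(\mathfrak{X})$ lies in $\mathfrak{X}$; taking the union over $i$ then gives the statement. The case $i=0$ is trivial. Throughout I will use that $\mathrm{girth}(\mathfrak{X})\ge3$, which is shown in the proof of Proposition \ref{prop:simple}. The proof of Proposition \ref{prop:seed} shows that $\Phi$ and $\mathrm{ex}$ do not decrease girth. So every member of $\Phi^{i}(\mathfrak{X})$ and of $\mathrm{ex}(\Phi^{i}(\mathfrak{X}))$ has at least three elements.

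For the inductive step, let $\mathcal{A}\in\Phi^{i+1}(\mathfrak{X})=\mathrm{uc}(\mathrm{ex}(\Phi^{i}(\mathfrak{X})))$ with $|\mathcal{A}|=3$. Then $\mathcal{A}$ includes some $\mathcal{T}\in\mathrm{ex}(\Phi^{i}(\mathfrak{X}))$ with $|\mathcal{T}|\ge3$, so $\mathcal{T}=\mathcal{A}$. If $\mathcal{A}\in\Phi^{i}(\mathfrak{X})$, the induction hypothesis applies. Otherwise $\mathcal{A}=(\mathcal{S}\cup\mathcal{S}')\backslash\{C\}$ with $\mathcal{S},\mathcal{S}'\in\Phi^{i}(\mathfrak{X})$, $\mathcal{S}\cap\mathcal{S}'\notin\Phi^{i}(\mathfrak{X})$, and $C\in\mathcal{S}\cap\mathcal{S}'$. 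Then $|\mathcal{S}\cup\mathcal{S}'|=4$.

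Next I pin down $\mathcal{S}$ and $\mathcal{S}'$. If one of them, say $\mathcal{S}$, had four elements, then $\mathcal{S}'\subseteq\mathcal{S}$ and $\mathcal{S}\cap\mathcal{S}'=\mathcal{S}'\in\Phi^{i}(\mathfrak{X})$, a contradiction. So both have size $3$. They are distinct, since otherwise their intersection would lie in $\Phi^{i}(\mathfrak{X})$. Hence $\mathcal{S}\cap\mathcal{S}'=\{C,C'\}$, and $\mathcal{S}=\{C,C',D_{1}\}$, $\mathcal{S}'=\{C,C',D_{2}\}$, $\mathcal{A}=\{C',D_{1},D_{2}\}$. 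By induction $\mathcal{S},\mathcal{S}'\in\mathfrak{X}$, so $\mathrm{sn}(\mathcal{S})\le2$ and $\mathrm{sn}(\mathcal{S}')\le2$.

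The proof of Lemma \ref{lem:sn2-base} (via Lemma \ref{lem:sp-two}) gives distinct circuits $P,Q$ with $D:=P\cup Q$ of nullity $2$ and $\mathcal{S}\subseteq\mathcal{C}(M|D)$. Likewise it gives $D'$ of nullity $2$ with $\mathcal{S}'\subseteq\mathcal{C}(M|D')$.

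The key step, and the main obstacle, is to show $D=D'=C\cup C'$. Since $C,C'$ are distinct circuits included in $D$, we have $2\le\mathrm{nullity}(C\cup C')\le\mathrm{nullity}(D)=2$. Equality of nullity means every element of $D\backslash(C\cup C')$ is a coloop of $M|D$. But $D$ is a union of circuits, so $M|D$ has no coloops, and therefore $D=C\cup C'$. Symmetrically $D'=C\cup C'$.

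Now $\mathcal{C}(M|D)$ contains the three circuits $C,C',D_{1}$. So Lemma \ref{lem:sp-two}, applied to the nullity-$2$ set $D=C\cup C'$, rules out $C\cap C'=\emptyset$, and it then gives $\mathrm{sp}^{\infty}(\{C,C'\})=\mathcal{C}(M|D)\supseteq\mathcal{A}$. Thus $\mathrm{sn}(\mathcal{A})\le2<3=|\mathcal{A}|$, so $\mathcal{A}\in\mathfrak{X}$, which completes the induction.
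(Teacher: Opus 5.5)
Your proof is correct and follows the paper's argument. It uses the same induction on $i$, the same reduction via the girth bound to two $3$-element members $\{C,C',D_{1}\}$ and $\{C,C',D_{2}\}$ of $\mathfrak{X}$, and the same conclusion that the third circuits lie in the Gaussian span of the shared pair. The only difference is cosmetic and lies in the last step. You show, by a coloop argument, that the nullity-$2$ set equals $C\cup C'$ and then invoke Lemma~\ref{lem:sp-two}. The paper instead works inside $B\cup\{e_{1},e_{2}\}$ from Lemma~\ref{lem:sn2-base} and reads off $C_{3},C_{4}\subseteq C_{1}\cup C_{2}$, and their membership in $\mathrm{sp}(\{C_{1},C_{2}\})$, from Lemma~\ref{lem:fund-circ}.
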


\begin{proof}
Since $\Phi^{\infty}(\mathfrak{X})=\bigcup_{i\ge0}\Phi^{i}(\mathfrak{X})$,
it suffices to show that every member of $\Phi^{i}(\mathfrak{X})$
of cardinality $3$ lies in $\mathfrak{X}$, which we prove by induction
on $i$. The case $i=0$ is immediate. Write $\mathfrak{Y}=\Phi^{i}(\mathfrak{X})$
and assume the assertion for $\mathfrak{Y}$. Since $\Phi(\mathfrak{Y})\subseteq\Phi^{\infty}(\mathfrak{X})$
and $\mathrm{Syz}(M)$ is simple by Proposition \ref{prop:simple},
we have $\mathrm{girth}(\Phi(\mathfrak{Y}))\ge3$. As every member
of $\Phi(\mathfrak{Y})$ includes a member of $\mathrm{ex}(\mathfrak{Y})\subseteq\Phi(\mathfrak{Y})$,
each member of $\Phi(\mathfrak{Y})$ of cardinality $3$ lies in $\mathrm{ex}(\mathfrak{Y})$.
Members of $\mathfrak{Y}$ are covered by the induction hypothesis,
so we need only treat $(\mathcal{A}_{1}\cup\mathcal{A}_{2})\backslash\{C\}$
of cardinality $3$ with $\mathcal{A}_{1},\mathcal{A}_{2}\in\mathfrak{Y}$,
$\mathcal{A}_{1}\cap\mathcal{A}_{2}\notin\mathfrak{Y}$, and $C\in\mathcal{A}_{1}\cap\mathcal{A}_{2}$.

Since $\mathcal{A}_{1}\cup\mathcal{A}_{2}$ contains $C$, it has
cardinality $4$. Neither $\mathcal{A}_{1}$ nor $\mathcal{A}_{2}$
includes the other, as otherwise $\mathcal{A}_{1}\cap\mathcal{A}_{2}\in\mathfrak{Y}$,
so each is a proper subset of $\mathcal{A}_{1}\cup\mathcal{A}_{2}$
and $\left|\mathcal{A}_{1}\right|=\left|\mathcal{A}_{2}\right|=3$
by the girth bound. Hence $\mathcal{A}_{1},\mathcal{A}_{2}\in\mathfrak{X}$
by the induction hypothesis. Write $\mathcal{A}_{1}=\{C_{1},C_{2},C_{3}\}$
and $\mathcal{A}_{2}=\{C_{1},C_{2},C_{4}\}$ with $C_{3}\neq C_{4}$,
so that $C\in\{C_{1},C_{2}\}$. 

Since $\mathcal{A}_{1}\in\mathfrak{X}$ and $\left|\mathcal{A}_{1}\right|=3$,
by Lemma \ref{lem:sn2-base}, there exist a base $B$ of $M$ and
distinct $e_{1},e_{2}\in E(M)\backslash B$ such that $C_{1}$, $C_{2}$,
and $C_{3}$ are included in $\tilde{B}:=B\cup\{e_{1},e_{2}\}$. Hence
$C_{3}\subseteq C_{1}\cup C_{2}$ by Lemma \ref{lem:fund-circ}(\ref{enu:fund2}),
and the same argument applied to $\mathcal{A}_{2}$ gives $C_{4}\subseteq C_{1}\cup C_{2}$.
Now, $C_{3}$ is a circuit included in $\tilde{B}$ and distinct from
$C_{1}$ and $C_{2}$, and so is $C_{4}$. By Lemma \ref{lem:fund-circ}(\ref{enu:fund2})
and (\ref{enu:fund1}), each is $\left\lfloor (C_{1}\cup C_{2})\backslash\{c\}\right\rfloor $
for some $c\in C_{1}\cap C_{2}$ and hence lies in $\mathrm{sp}(\{C_{1},C_{2}\})$.
Therefore $\mathrm{sn}(\{C_{l},C_{3},C_{4}\})\le2$ for $l\in\{1,2\}$,
and $(\mathcal{A}_{1}\cup\mathcal{A}_{2})\backslash\{C\}\in\mathfrak{X}$. 
\end{proof}

\begin{cor}
\label{cor:rank-3}If $n-k\ge3$, then $r(\mathrm{Syz}(M))\ge3$.
\end{cor}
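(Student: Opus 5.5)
The plan is to exhibit three circuits of $M$ that are independent in $\mathrm{Syz}(M)$. Lemma \ref{lem:size3} reduces dependence of a $3$-element circuit system to membership in the seed $\mathfrak{X}$, and Lemma \ref{lem:sn2-base} turns membership in $\mathfrak{X}$ into a nullity bound that fundamental circuits violate.

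First assume $M$ is loopless, as in the rest of this section. Fix a base $B$ of $M$. Since $n-k\ge3$, choose distinct $e_{1},e_{2},e_{3}\in E(M)\backslash B$ and put $C_{i}=C(e_{i},B)$ and $\mathcal{A}=\{C_{1},C_{2},C_{3}\}$. These circuits are distinct, since $e_{i}$ lies only in $C_{i}$, so $\left|\mathcal{A}\right|=3$.

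Suppose $\mathcal{A}$ were dependent in $\mathrm{Syz}(M)$, that is, $\mathcal{A}\in\Phi^{\infty}(\mathfrak{X})$. Lemma \ref{lem:size3} gives $\mathcal{A}\in\mathfrak{X}$, so $\mathrm{sn}(\mathcal{A})<3$, i.e. $\mathrm{sn}(\mathcal{A})\le2$. Lemma \ref{lem:sn2-base} then yields a base $B'$ and elements $f_{1},f_{2}\notin B'$ with $\bigcup\mathcal{A}\subseteq B'\cup\{f_{1},f_{2}\}$.

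Every subset of $B'\cup\{f_{1},f_{2}\}$ has nullity at most $2$, because removing $f_{1},f_{2}$ leaves an independent set. On the other hand, put $X=\bigcup\mathcal{A}$. The set $X\backslash\{e_{1},e_{2},e_{3}\}\subseteq B$ is independent. Each $e_{i}$ lies in $\mathrm{cl}(C_{i}\backslash\{e_{i}\})\subseteq\mathrm{cl}(X\backslash\{e_{1},e_{2},e_{3}\})$. Hence $r(X)=\left|X\right|-3$ and $\mathrm{nullity}(X)=3$, a contradiction. So $\mathcal{A}$ is independent and $r(\mathrm{Syz}(M))\ge3$. This nullity computation is the only real content; the rest is bookkeeping with the earlier lemmas.

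Finally, for $M$ with $p$ loops, write $\mathrm{Syz}(M)=\mathrm{Syz}(M')\oplus U_{p,p}$ with $M'=M\backslash\mathrm{cl}_{M}(\emptyset)$. Then $M'$ has rank $k$ and $\left|E(M')\right|-k=n-k-p$. If $n-k-p\ge3$, the loopless case gives $r(\mathrm{Syz}(M'))\ge3$. Otherwise $M'$ has $n-k-p\le2$ fundamental circuits with respect to a base. Any set of at most two circuits is independent in $\mathrm{Syz}(M')$, since $\mathrm{Syz}(M')$ is simple by Proposition \ref{prop:simple}. Hence $r(\mathrm{Syz}(M'))\ge n-k-p$, and in either case $r(\mathrm{Syz}(M))=r(\mathrm{Syz}(M'))+p\ge\min(3,n-k)=3$.
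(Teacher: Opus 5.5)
Your proof is correct and follows the paper's argument. You use the same system $\{C(e_1,B),C(e_2,B),C(e_3,B)\}$, with Lemma~\ref{lem:size3} reducing dependence to membership in $\mathfrak{X}$ and Lemma~\ref{lem:sn2-base} placing $\bigcup\mathcal{A}$ inside some $B'\cup\{f_1,f_2\}$. The differences are minor:
\begin{itemize}
\item You get the contradiction by comparing $\mathrm{nullity}(\bigcup\mathcal{A})=3$ with the bound $2$ for subsets of $B'\cup\{f_1,f_2\}$. The paper instead applies Lemma~\ref{lem:fund-circ}(\ref{enu:fund1}) to $e_1$, which lies in $C_1$ but in neither $C_2$ nor $C_3$.
\item Your final paragraph on loops covers a case the paper excludes by the section's standing loopless convention.
\end{itemize}
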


\begin{proof}
Let $B$ be a base of $M$ and take distinct $e_{1},e_{2},e_{3}\in E(M)\backslash B$,
which exist since $n-k\ge3$. Let $C_{i}=C(e_{i},B)$ for $i\in[3]$
and $\mathcal{A}=\{C_{1},C_{2},C_{3}\}$. Suppose $\mathrm{sn}(\mathcal{A})\le2$.
By Lemma \ref{lem:sn2-base}, the members of $\mathcal{A}$ are included
in $B'\cup\{f_{1},f_{2}\}$ for some base $B'$ and distinct $f_{1},f_{2}\in E(M)\backslash B'$.
Since $C_{1}$ is included in that set, $e_{1}$ lies in it, while
$C_{2}$ and $C_{3}$ both avoid $e_{1}$; this contradicts Lemma
\ref{lem:fund-circ}(\ref{enu:fund1}). Hence $\mathrm{sn}(\mathcal{A})=3$
by Proposition \ref{prop:sn-basic}(\ref{enu:sn-basic1}), so $\mathcal{A}\notin\mathfrak{X}$,
and $\mathcal{A}\notin\Phi^{\infty}(\mathfrak{X})$ by Lemma \ref{lem:size3}.
Thus $\mathcal{A}$ is independent in $\mathrm{Syz}(M)$ and $r(\mathrm{Syz}(M))\ge\left|\mathcal{A}\right|=3$. 
\end{proof}

\begin{cor}
\label{cor:sn-3-rank}If $M$ is connected and $\mathrm{sn}(\mathcal{C}(M))\le3$,
then $r(\mathrm{Syz}(M))=\mathrm{sn}(\mathcal{C}(M))$. 
\end{cor}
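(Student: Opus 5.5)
Since $M$ is connected, the value $s:=\mathrm{sn}(\mathcal{C}(M))$ is governed by $n-k$ through Lemma \ref{lem:sn-12} and Corollary \ref{cor:sn-3}. I would therefore split into the cases $s\in\{0,1,2,3\}$, and in each case match the upper bound $r(\mathrm{Syz}(M))\le s$ from Corollary \ref{cor:rank-sn} with a lower bound.

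For $s=0$ we have $n=k$. Then $M$ has no circuits, the ground set $\mathcal{C}(M)$ is empty, and the rank is $0$.

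For $s=1$, Lemma \ref{lem:sn-12} gives $n-k=1$. As in its proof, $\mathcal{C}(M)$ consists of a single circuit, so $\mathrm{Syz}(M)$ is a one-element matroid. It is simple by Proposition \ref{prop:simple}, hence that element is not a loop, and the rank is $1$.

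For $s=2$ we have $n-k=2$, so $M$ has at least two circuits. Take any two distinct circuits $C_{1},C_{2}$. The set $\{C_{1},C_{2}\}$ is independent in $\mathrm{Syz}(M)$: by Proposition \ref{prop:simple-gen}(\ref{enu:girth-1}) and Proposition \ref{prop:simple}, the girth of $\Phi^{\infty}(\mathfrak{X})$ is at least $3$, so no two-element set is dependent. This gives rank at least $2$, which matches the upper bound.

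For $s=3$, Corollary \ref{cor:sn-3} gives $n-k\ge3$, and Corollary \ref{cor:rank-3} then gives $r(\mathrm{Syz}(M))\ge3$. Corollary \ref{cor:rank-sn} gives $r(\mathrm{Syz}(M))\le3$, so the rank is $3$.

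None of these steps is hard; all the real work was done in Lemma \ref{lem:size3} and Corollary \ref{cor:rank-3}. The only point needing care is the case $s=3$, where we must note that $s=3$ forces $n-k\ge3$. This uses connectedness through Lemma \ref{lem:sn-12}: if $n-k\le2$, then $s=n-k\le2$ (for $n-k=0$ directly, and for $n-k\in\{1,2\}$ by Lemma \ref{lem:sn-12}).
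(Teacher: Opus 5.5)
Your proof is correct and matches the paper's argument: the same case split on $s\in\{0,1,2,3\}$, the upper bound from Corollary~\ref{cor:rank-sn}, and lower bounds from Lemma~\ref{lem:sn-12}, simplicity (Proposition~\ref{prop:simple}), and Corollaries~\ref{cor:sn-3} and~\ref{cor:rank-3}. The only cosmetic difference is in the case $s=2$: you obtain two distinct circuits from $n-k=2$, whereas the paper obtains them from $\mathrm{sn}(\mathcal{C}(M))\le\left|\mathcal{C}(M)\right|$.
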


\begin{proof}
Put $s=\mathrm{sn}(\mathcal{C}(M))$. Since $r(\mathrm{Syz}(M))\le s$
by Corollary \ref{cor:rank-sn}, we prove the reverse inequality.
If $s=0$, then $\mathcal{C}(M)=\emptyset$ and both sides are $0$.
If $s=1$, then $n-k=1$ by Lemma \ref{lem:sn-12}, so $\mathcal{C}(M)$
has a single member and $r(\mathrm{Syz}(M))=1$. If $s=2$, then $\mathrm{Syz}(M)$
is simple by Proposition \ref{prop:simple}, and its ground set $\mathcal{C}(M)$
has at least two members by Proposition \ref{prop:sn-basic}(\ref{enu:sn-basic1}),
so $r(\mathrm{Syz}(M))\ge2$. If $s=3$, then $n-k\ge3$ by Corollary
\ref{cor:sn-3}, so $r(\mathrm{Syz}(M))\ge3$ by Corollary \ref{cor:rank-3}. 
\end{proof}

\begin{lem}
\label{lem:cl-two} For distinct circuits $C_{1}$ and $C_{2}$ of
$M$, 
\[
\mathrm{cl}_{\mathrm{Syz}(M)}(\{C_{1},C_{2}\})=\mathrm{sp}^{\infty}(\{C_{1},C_{2}\}).
\]
\end{lem}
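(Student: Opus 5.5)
The plan is to prove the two inclusions separately, using that $\{C_{1},C_{2}\}$ is independent in $\mathrm{Syz}(M)$. That independence holds because $\mathrm{Syz}(M)$ is simple by Proposition \ref{prop:simple}, so every two-element set of circuits is independent. Then Corollary \ref{cor:sp-cl}, applied to the independent set $\{C_{1},C_{2}\}$, gives
$\mathrm{sp}^{\infty}(\{C_{1},C_{2}\})\subseteq\mathrm{cl}_{\mathrm{Syz}(M)}(\{C_{1},C_{2}\})$ at once.

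For the reverse inclusion, let $C\in\mathrm{cl}_{\mathrm{Syz}(M)}(\{C_{1},C_{2}\})$ with $C\notin\{C_{1},C_{2}\}$; the case $C\in\{C_{1},C_{2}\}$ is covered by Proposition \ref{prop:closure}(\ref{enu:closure1}). Since $\{C_{1},C_{2}\}$ is independent and $C$ lies in its closure, the system $\mathcal{A}:=\{C_{1},C_{2},C\}$ is dependent in $\mathrm{Syz}(M)$, so $\mathcal{A}\in\Phi^{\infty}(\mathfrak{X})$. It has cardinality $3$, so Lemma \ref{lem:size3} gives $\mathcal{A}\in\mathfrak{X}$, that is, $\mathrm{sn}(\mathcal{A})\le2$. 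By Lemma \ref{lem:sn2-base}, $\bigcup\mathcal{A}\subseteq\tilde{B}:=B\cup\{e_{1},e_{2}\}$ for some base $B$ and distinct $e_{1},e_{2}\notin B$.

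Now $C_{1}$, $C_{2}$, and $C$ are all circuits included in $\tilde{B}$. Lemma \ref{lem:fund-circ}(\ref{enu:fund2}) then gives $C\subseteq\bigcup\mathcal{C}(M|\tilde{B})=C_{1}\cup C_{2}=:D$. The key step is to pin down $\mathrm{nullity}(D)$. Since $D\subseteq\tilde{B}$ and $\tilde{B}$ has nullity $2$, we get $\mathrm{nullity}(D)\le2$. Since $D$ includes the two distinct circuits $C_{1}$ and $C_{2}$, Lemma \ref{lem:one-circ} gives $\mathrm{nullity}(D)\ge2$. Hence $\mathrm{nullity}(D)=2$.

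Lemma \ref{lem:sp-two} then yields $\mathrm{sp}^{\infty}(\{C_{1},C_{2}\})=\mathcal{C}(M|D)$. This collection contains $C$ because $C\subseteq D$, which completes the reverse inclusion. I do not expect a real obstacle: the substantive work is already done in Lemmas \ref{lem:size3} and \ref{lem:sn2-base}. The one point needing care is the nullity bound, where the monotonicity of nullity under inclusion within $\tilde{B}$ must be justified; it holds because rank increases by at most one per added element.
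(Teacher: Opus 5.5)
Your proof is correct and follows the paper's argument. You use simplicity for independence of $\{C_1,C_2\}$, then Lemmas~\ref{lem:size3} and \ref{lem:sn2-base} to place $C_1,C_2,C$ inside some $B\cup\{e_1,e_2\}$, and Lemma~\ref{lem:fund-circ} to conclude. The differences are cosmetic: you cite Corollary~\ref{cor:sp-cl} for the easy inclusion where the paper argues $\mathrm{sn}(\{C_1,C_2,C\})\le 2$ directly, and you finish via $\mathrm{nullity}(C_1\cup C_2)=2$ and Lemma~\ref{lem:sp-two} where the paper reads off $C=\lfloor (C_1\cup C_2)\backslash\{c\}\rfloor\in\mathrm{sp}(\{C_1,C_2\})$ from Lemma~\ref{lem:fund-circ}(\ref{enu:fund1}).
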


\begin{proof}
Put $\mathcal{A}=\{C_{1},C_{2}\}$ and write $\mathrm{cl}$ for $\mathrm{cl}_{\mathrm{Syz}(M)}$.
Since $\mathrm{Syz}(M)$ is simple by Proposition \ref{prop:simple},
$\mathcal{A}$ is independent. Both $\mathrm{cl}(\mathcal{A})$ and
$\mathrm{sp}^{\infty}(\mathcal{A})$ include $\mathcal{A}$, so it
suffices to show, for a circuit $C\notin\mathcal{A}$, that $\mathcal{A}\cup\{C\}\in\Phi^{\infty}(\mathfrak{X})$
if and only if $C\in\mathrm{sp}^{\infty}(\mathcal{A})$. 

If $C\in\mathrm{sp}^{\infty}(\mathcal{A})$, then $\mathcal{A}$ Gaussian
spans $\mathcal{A}\cup\{C\}$, so $\mathrm{sn}(\mathcal{A}\cup\{C\})\le2<3$
and $\mathcal{A}\cup\{C\}\in\mathfrak{X}\subseteq\Phi^{\infty}(\mathfrak{X})$.
Conversely, if $\mathcal{A}\cup\{C\}\in\Phi^{\infty}(\mathfrak{X})$,
then $\mathcal{A}\cup\{C\}\in\mathfrak{X}$ by Lemma \ref{lem:size3},
so Lemma \ref{lem:sn2-base} gives a base $B$ of $M$ and $e_{1},e_{2}\in E(M)\backslash B$
such that $C_{1}$, $C_{2}$, and $C$ are included in $\tilde{B}:=B\cup\{e_{1},e_{2}\}$.
Lemma \ref{lem:fund-circ}(\ref{enu:fund2}) and (\ref{enu:fund1})
then show that $C\subseteq C_{1}\cup C_{2}$ and $C=\left\lfloor (C_{1}\cup C_{2})\backslash\{c\}\right\rfloor $
for some $c\in C_{1}\cap C_{2}$, and hence $C\in\mathrm{sp}(\mathcal{A})\subseteq\mathrm{sp}^{\infty}(\mathcal{A})$.
\end{proof}

\begin{cor}
\label{cor:rank2-flats}Let $C_{1}$ and $C_{2}$ be distinct circuits
of $M$ and put $D=C_{1}\cup C_{2}$. If $\mathrm{nullity}(D)=2$,
then $\mathcal{C}(M|D)$ is a rank-$2$ flat of $\mathrm{Syz}(M)$;
otherwise $\{C_{1},C_{2}\}$ is. Every rank-$2$ flat of $\mathrm{Syz}(M)$
arises in this way.
\end{cor}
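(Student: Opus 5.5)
The corollary is essentially a repackaging of Lemma~\ref{lem:cl-two} together with Lemma~\ref{lem:sp-two}, so the plan is to compute the closure of a two-element set in $\mathrm{Syz}(M)$ and then check that every rank-$2$ flat is such a closure.

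Fix distinct circuits $C_{1},C_{2}$ of $M$ and put $D=C_{1}\cup C_{2}$. Since $\mathrm{Syz}(M)$ is simple by Proposition~\ref{prop:simple}, the set $\{C_{1},C_{2}\}$ is independent of rank $2$. Hence $\mathrm{cl}_{\mathrm{Syz}(M)}(\{C_{1},C_{2}\})$ is a rank-$2$ flat. By Lemma~\ref{lem:cl-two} this closure equals $\mathrm{sp}^{\infty}(\{C_{1},C_{2}\})$. Lemma~\ref{lem:sp-two} then identifies it as $\mathcal{C}(M|D)$ when $\mathrm{nullity}(D)=2$ and as $\{C_{1},C_{2}\}$ otherwise. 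This gives the first two assertions directly.

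For the last assertion, let $\mathcal{F}$ be any rank-$2$ flat of $\mathrm{Syz}(M)$. I will choose a base of $\mathrm{Syz}(M)|\mathcal{F}$; it consists of two distinct circuits $C_{1},C_{2}$. Then $\mathcal{F}=\mathrm{cl}_{\mathrm{Syz}(M)}(\{C_{1},C_{2}\})$, since a flat of rank $2$ is the closure of any two-element independent subset of it. By the previous paragraph, $\mathcal{F}$ has one of the two stated forms for $D=C_{1}\cup C_{2}$.

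I expect no real obstacle here. The only point needing care is that a rank-$2$ flat is the closure of any independent pair it contains, which is standard matroid theory. The substantive work already sits in Lemma~\ref{lem:cl-two}, which rests on Lemma~\ref{lem:size3}.
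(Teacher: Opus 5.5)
Your proposal is correct and follows essentially the same route as the paper. The paper likewise combines Lemma~\ref{lem:cl-two} with Lemma~\ref{lem:sp-two} and uses Proposition~\ref{prop:simple} to get independence of $\{C_{1},C_{2}\}$; for the converse it notes that a rank-$2$ flat contains an independent pair and equals that pair's closure.
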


\begin{proof}
By Lemmas \ref{lem:cl-two} and \ref{lem:sp-two}, we have $\mathrm{cl}_{\mathrm{Syz}(M)}(\{C_{1},C_{2}\})=\mathrm{sp}^{\infty}(\{C_{1},C_{2}\})$,
which is $\mathcal{C}(M|D)$ if $\mathrm{nullity}(D)=2$ and $\{C_{1},C_{2}\}$
otherwise. Since $\{C_{1},C_{2}\}$ is independent in $\mathrm{Syz}(M)$
by Proposition \ref{prop:simple}, this closure is a rank-$2$ flat.
Conversely, let $\mathcal{A}$ be a rank-$2$ flat of $\mathrm{Syz}(M)$.
Then $\mathcal{A}$ includes an independent pair $\{C_{1},C_{2}\}$,
hence includes its closure, and the two are equal since both are rank-$2$
flats.
\end{proof}

Since $\mathrm{sp}^{\infty}$ is not a matroid closure operator by
Example \ref{exa:Vamos1}(\ref{enu:not-mcl}), while $\mathrm{cl}_{\mathrm{Syz}(M)}$
is, the equality in Lemma \ref{lem:cl-two} cannot hold for all circuit
systems. For independent circuit systems, only the inclusion of Corollary
\ref{cor:sp-cl} is available in general.\medskip{}

In the next two examples $\mathrm{Syz}(M)$ has rank $3$. Following
viewpoint \ref{enu:HA-viewpoint}, we call the mp-flats of $\mathrm{Syz}(M)$
\emph{points} and the corank-$2$ flats \emph{lines}. Any two distinct
lines then meet in a point.
\begin{example}[Uniform matroids $U_{k,k+3}$]
 Let $k\ge1$ and put $M=U_{k,k+3}$. Then $M$ has $\binom{k+3}{2}$
circuits, each of size $k+1$. Since $n-k=3$, we have $r(\mathrm{Syz}(M))=3$
by Corollaries \ref{cor:rank-sn} and \ref{cor:rank-3}, and the singletons
$\left\{ C\right\} $ are rank-$1$ flats by Proposition \ref{prop:simple}.
Let $C_{1}$ and $C_{2}$ be distinct circuits of $M$. Then $r(C_{1}\cup C_{2})=k$
since $\left|C_{1}\cup C_{2}\right|\ge k+2$. By Corollary \ref{cor:rank2-flats},
the rank-$2$ flats of $\mathrm{Syz}(M)$ are described as follows.
\begin{itemize}
\item If $\left|C_{1}\cup C_{2}\right|=k+2$, then $\mathrm{nullity}(C_{1}\cup C_{2})=2$
and $C_{1}\cup C_{2}=E(M)\backslash\{e\}$ for some $e$, so $\mathcal{C}(M\backslash\{e\})$
is a rank-$2$ flat, of cardinality $k+2$ since $M\backslash\{e\}\simeq U_{k,k+2}$.
There are $k+3$ of them.
\item If $\left|C_{1}\cup C_{2}\right|=k+3$, then $\mathrm{nullity}(C_{1}\cup C_{2})=3$,
and $\{C_{1},C_{2}\}$ is a rank-$2$ flat. There are $3\binom{k+3}{4}$
of them, one for each partition of a four-element subset of $E(M)$
into two pairs.
\end{itemize}
Each circuit $C$ equals $E(M)\backslash\{e_{1},e_{2}\}$ for some
$e_{1}\neq e_{2}$, and $\{C\}=\mathcal{C}(M\backslash\{e_{1}\})\cap\mathcal{C}(M\backslash\{e_{2}\})$.
Hence no three of the $k+3$ points of the form $\mathcal{C}(M\backslash\{e\})$
are collinear, and each line passes through exactly two of them. Figure
\ref{fig:Syz-U25} shows the case $k=2$. 
\begin{figure}[h]
\begin{spacing}{0.4}
\begin{centering}
\begin{center}
\begin{tikzpicture}[font=\scriptsize]

\begin{scope}[rotate=-5,xshift=0cm,yshift=0cm,line cap=round,scale=0.44]
  \foreach \i in {1,...,5}{
    \path (0,0)--++({90+72*(\i-1)}:4) coordinate (A\i);
  }
  \foreach \i in {1,...,5}{
    \path (A\i)++({108+72*(\i-1)}:1.6) coordinate (E\i);
  }

 \draw [name path=L1][thick, shorten >=-2em, shorten <=-2em] (E1)--(A4);
 \draw [name path=L2][thick, shorten >=-2em, shorten <=-2em] (E2)--(A5);
 \draw [name path=L3][thick, shorten >=-2em, shorten <=-2em] (E3)--(A1);
 \draw [name path=L4][thick, shorten >=-2em, shorten <=-2em] (E4)--(A2);
 \draw [name path=L5][thick, shorten >=-2em, shorten <=-2em] (E5)--(A3);

 \path [name intersections={of=L1 and L2, by=B5}];
 \path [name intersections={of=L2 and L3, by=B1}];
 \path [name intersections={of=L3 and L4, by=B2}];
 \path [name intersections={of=L4 and L5, by=B3}];
 \path [name intersections={of=L5 and L1, by=B4}];

 \draw [name path=N1][very thick,cyan, shorten >=-2em, shorten <=-2em] (B3)--(B1) .. controls ++(108:1.6) .. (E1);
 \draw [name path=N2][very thick,blue, shorten >=-2em, shorten <=-2em] (B4)--(B2) .. controls +(180:1.6) .. (E2);
 \draw [name path=N3][very thick,green, shorten >=-2em, shorten <=-2em] (B5)--(B3) .. controls +(252:1.6) .. (E3);
 \draw [name path=N4][very thick,orange, shorten >=-2em, shorten <=-2em] (B1)--(B4) .. controls +(324:1.6) .. (E4);
 \draw [name path=N5][very thick,red, shorten >=-2em, shorten <=-2em] (B2)--(B5) .. controls +(396:1.6) .. (E5);

 \path [name intersections={of=N1 and N5, by=C1}];
 \path [name intersections={of=N2 and N1, by=C2}];
 \path [name intersections={of=N3 and N2, by=C3}];
 \path [name intersections={of=N4 and N3, by=C4}];
 \path [name intersections={of=N5 and N4, by=C5}];

  \foreach \i in {1,...,5}{
    \foreach \j in {A,E,B,C}{
     \fill (\j\i) circle (4.5pt);
    }
  }

  \foreach \i in {1,...,5}{
    \fill (B\i) circle (6pt);
  }
\path (E1)++(90:1.5) (E3)++(270:1.5); 

\end{scope}

\end{tikzpicture}
\par\end{center}
\par\end{centering}
\end{spacing}
\begin{spacing}{0.3}
\noindent\caption{\label{fig:Syz-U25}The syzygy matroid $\mathrm{Syz}(U_{2,5})$.}
\end{spacing}
\end{figure}
\end{example}

\begin{example}[Vámos matroid, continued]
\label{exa:Vamos2} Let $M=V_{8}$ be the Vámos matroid as in Example
\ref{exa:Vamos1}. Since $\mathrm{sn}(\mathcal{C}(M))=3$ by Example
\ref{exa:Vamos1}(\ref{enu:Vamos-sn-3}), we have $r(\mathrm{Syz}(M))=3$
by Corollary \ref{cor:sn-3-rank}. Let $D_{1}$ and $D_{2}$ be distinct
circuits of $M$. Then $\left|D_{1}\cup D_{2}\right|\ge6$ and $r(D_{1}\cup D_{2})=4$. 
\begin{itemize}
\item If $\left|D_{1}\cup D_{2}\right|=6$, then $\mathrm{nullity}(D_{1}\cup D_{2})=2$
and $D_{1}\cup D_{2}=E(M)\backslash\{e_{1},e_{2}\}$ for some $e_{1}\neq e_{2}$,
so $\mathcal{C}(M\backslash\{e_{1},e_{2}\})$ is a rank-$2$ flat,
of cardinality at least $3$ by Lemma \ref{lem:sp-two}, since $D_{1}\cap D_{2}\neq\emptyset$.
There are $28$ such flats, one for each pair $\{e_{1},e_{2}\}$,
and a circuit $C$ is contained in $\mathcal{C}(M\backslash\{e_{1},e_{2}\})$
if and only if $C\cap\{e_{1},e_{2}\}=\emptyset$. Hence each of $C_{0},\dots,C_{4}$
is contained in six of these flats and each five-element circuit in
three. Let $t$ be the number of circuits among $C_{0},\ldots,C_{4}$
that avoid $\{e_{1},e_{2}\}$. Each of them is included in exactly
two of the six five-element subsets of $E(M)\backslash\{e_{1},e_{2}\}$,
and no two of them in the same one, so by Example \ref{exa:Vamos1}(\ref{enu:Vamos-DEF})
the flat $\mathcal{C}(M\backslash\{e_{1},e_{2}\})$ has $t+(6-2t)=6-t$
members. These $28$ flats fall into five types, listed below with
$t=3,2,1,1,0$, respectively.
\begin{enumerate}[topsep=6pt]
\item \label{enu:Vamos2-1} $\mathcal{C}(M\backslash56)$, $\mathcal{C}(M\backslash78)$:
cardinality $3$, consisting of $C_{0}$ and two of $C_{1},\dots,C_{4}$.
\item \label{enu:Vamos2-2}$\mathcal{C}(M\backslash12)$, $\mathcal{C}(M\backslash34)$:
cardinality $4$, containing two of $C_{1},\dots,C_{4}$.
\item \label{enu:Vamos2-3}$\mathcal{C}(M\backslash57)$, $\mathcal{C}(M\backslash58)$,
$\mathcal{C}(M\backslash67)$, $\mathcal{C}(M\backslash68)$: cardinality
$5$, containing $C_{0}$. 
\item \label{enu:Vamos2-4}$\mathcal{C}(M\backslash ij)$ with $i\le4<j$:
cardinality $5$, sixteen of them, each containing one of $C_{1},\dots,C_{4}$. 
\item \label{enu:Vamos2-5}$\mathcal{C}(M\backslash13)$, $\mathcal{C}(M\backslash14)$,
$\mathcal{C}(M\backslash23)$, $\mathcal{C}(M\backslash24)$: cardinality
$6$, consisting only of five-element circuits.
\end{enumerate}
\item If $\left|D_{1}\cup D_{2}\right|\ge7$, then $\mathrm{nullity}(D_{1}\cup D_{2})\ge3$
and $\{D_{1},D_{2}\}$ is a flat of rank $2$. There are $\binom{41}{2}=820$
pairs of distinct circuits of $M$, of which $2\binom{3}{2}+2\binom{4}{2}+4\binom{5}{2}+16\binom{5}{2}+4\binom{6}{2}=278$
are included in one of the $28$ flats above. The remaining $542$
pairs are the rank-$2$ flats arising in this case.
\end{itemize}
There are $570$ rank-$2$ flats in total. Figure \ref{fig:Syz-Vamos}
shows the five lines $\left\{ C_{0}\right\} ,\dots,\left\{ C_{4}\right\} $
together with four other lines $\left\{ i5678\right\} $ for $i\in\left[4\right]$
and their intersection points. 
\begin{figure}[h]
\begin{spacing}{0.4}
\begin{centering}
\begin{center}
\begin{tikzpicture}[font=\scriptsize]

\begin{scope}[xshift=0cm,yshift=0cm,line cap=round,scale=0.7]
  \foreach \i in {1,...,3}{
    \path (0,0)--++({90+120*(\i-1)}:4) coordinate (A\i);
  }

 \draw [name path=L0][thick] (A2)++(180:4.5)--(A3)--++(0:4.5);
 \path ($(A2)!.5!(A3)$) node[below=-1pt]{$\{C_0\}$};
 \draw [name path=L1][thick] (A1)++(60:1)--(A2)--++(240:1)node[right=-2pt]{$\{C_1\}$};
 \draw [name path=L2][thick] (A1)++(120:1)--(A3)--++(300:1)node[left=-2pt]{$\{C_2\}$};
 \draw [name path=L3][thick] (A1)++(70:1)--++(250:8.3)node[right=-2pt]{$\{15678\}$};
 \draw [name path=L4][thick] (A1)++(110:1)--++(290:8.3)node[left=-2pt]{$\{25678\}$};

 \draw [name path=N1][thick] (A2)++(220:1)node[left=-3pt]{$\{C_3\}$}--++(40:8);
 \draw [name path=N2][thick] (A3)++(320:1)node[right=-3pt]{$\{C_4\}$}--++(140:8);
 \path [name intersections={of=N1 and N2, by=A0}];
 \draw [name path=N3][thick] (A0)++(22:2.5)--++(202:11)node[below=-2pt]{$\{35678\}$};
 \draw [name path=N4][thick] (A0)++(158:2.5)--++(338:11)node[below=-2pt]{$\{45678\}$};

 \path [name intersections={of=L0 and N3, by=B0}];
 \path [name intersections={of=L0 and N4, by=C0}];
 \path [name intersections={of=L0 and L3, by=D0}];
 \path [name intersections={of=L0 and L4, by=E0}];

 \path [name intersections={of=L1 and N2, by=B1}];
 \path [name intersections={of=L1 and N3, by=B2}];
 \path [name intersections={of=L1 and N4, by=B3}];

 \path [name intersections={of=L2 and N1, by=C1}];
 \path [name intersections={of=L2 and N3, by=C2}];
 \path [name intersections={of=L2 and N4, by=C3}];

 \path [name intersections={of=L3 and N1, by=D1}];
 \path [name intersections={of=L3 and N2, by=D2}];
 \path [name intersections={of=L3 and N3, by=D3}];
 \path [name intersections={of=L3 and N4, by=D4}];

 \path [name intersections={of=L4 and N1, by=E1}];
 \path [name intersections={of=L4 and N2, by=E2}];
 \path [name intersections={of=L4 and N3, by=E3}];
 \path [name intersections={of=L4 and N4, by=E4}];

  \foreach \i in {1,...,4}{
     \draw [thick,shorten >=-5pt, shorten <=-5pt] (D4)--++({250-17.6*\i}:2pt);
  }
  \foreach \i in {1,...,4}{
     \draw [thick,shorten >=-5pt, shorten <=-5pt] (E3)--++({290+17.6*\i}:2pt);
  }
  \foreach \i in {1,...,4}{
     \draw [thick,shorten >=-10pt, shorten <=-10pt] (D3)--++({70-9.6*\i}:-3pt);
  }
  \foreach \i in {1,...,4}{
     \draw [thick,shorten >=-10pt, shorten <=-10pt] (E4)--++({110+9.6*\i}:-3pt);
  }

\draw [->,shorten >=3pt] ($(D3)+(150:1.5)$) .. controls +(-20:0.6) and +(120:0.6) .. (D3)
node [at start,left] {$\mathcal{C}(M\backslash24)$};
\draw [->,shorten >=3pt] ($(D4)+(150:2.5)$) .. controls +(-20:0.6) and +(120:0.6) .. (D4)
node [at start,left] {$\mathcal{C}(M\backslash23)$};

 \draw [->,shorten >=3pt] ($(E3)+(30:2.5)$) .. controls +(200:0.6) and +(60:0.6) .. (E3)
  node [at start,right] {$\mathcal{C}(M\backslash14)$};
 \draw [->,shorten >=3pt] ($(E4)+(30:1.5)$) .. controls +(200:0.6) and +(60:0.6) .. (E4)
  node [at start,right] {$\mathcal{C}(M\backslash13)$};

\draw [->,shorten >=3pt] ($(A0)+(270:1)$) .. controls +(80:0.4) and +(250:0.4) .. (A0)
node [at start,below] {$\mathcal{C}(M\backslash12)$};
\draw [->,shorten >=3pt] ($(A1)+(150:1.2)$) .. controls +(-20:0.4) and +(160:0.4) .. (A1)
node [at start,left] {$\mathcal{C}(M\backslash34)$};

\draw [->,shorten >=3pt] ($(A2)+(150:0.6)$) .. controls +(-20:0.3) and +(120:0.3) .. (A2)
node [at start,left] {$\mathcal{C}(M\backslash78)$};
 \draw [->,shorten >=3pt] ($(A3)+(30:0.6)$) .. controls +(200:0.3) and +(60:0.3) .. (A3)
  node [at start,right] {$\mathcal{C}(M\backslash56)$};

  \foreach \i in {0,...,3}{
    \foreach \j in {A,B,C}{
     \fill (\j\i) circle (3pt);
    }
  }
  \foreach \i in {0,...,4}{
    \foreach \j in {D,E}{
     \fill (\j\i) circle (3pt);
    }
  }

\end{scope}

\end{tikzpicture}
\par\end{center}
\par\end{centering}
\end{spacing}
\begin{spacing}{0.3}
\noindent\caption{\label{fig:Syz-Vamos}Nine lines of $\mathrm{Syz}(V_{8})$ and their
intersection points. Short segments indicate further lines through
a point.}
\end{spacing}
\end{figure}
\end{example}

\section{\label{sec:Combinatorial-Adjoints}Combinatorial Adjoints }

Definition \ref{def:Comb-Adjoint} below makes sense for every matroid
$M$, with $\mathrm{Syz}(M^{\ast})$ as defined at the beginning of
Section \ref{sec:Syzygy}. A coloop $e$ of $M$ is a loop of $M^{\ast}$,
and under the map $\beta$ introduced below, the circuit $\left\{ e\right\} $
of $M^{\ast}$ corresponds to the mp-flat $E(M)\backslash\{e\}$ of
$M$. Put $p=\left|\mathrm{cl}_{M^{\ast}}(\emptyset)\right|$. Every
mp-flat of $M$ other than the $p$ mp-flats $E(M)\backslash\{e\}$
with $e\in\mathrm{cl}_{M^{\ast}}(\emptyset)$ includes $\mathrm{cl}_{M^{\ast}}(\emptyset)$,
and the map $H\mapsto H\backslash\mathrm{cl}_{M^{\ast}}(\emptyset)$
is a bijection from the collection of such mp-flats onto $\mathcal{H}(M\backslash\mathrm{cl}_{M^{\ast}}(\emptyset))$.
Hence 
\[
\mathrm{Adj}(M)\simeq\mathrm{Adj}(M\backslash\mathrm{cl}_{M^{\ast}}(\emptyset))\oplus U_{p,p},
\]
 where $U_{p,p}\simeq\bigoplus^{p}_{i=1}U_{1,1}$ is the free matroid
on the collection of these $p$ mp-flats. Accordingly, for the rest
of this section, $M$ is assumed to have no coloops unless stated
otherwise, so that $M^{\ast}$ is loopless and the results of Section
\ref{sec:Syzygy} apply to $M^{\ast}$. Loops of $M$ are allowed.
Proposition \ref{prop:Adj-simple} below, together with the simplicity
of $U_{p,p}$, shows that $\mathrm{Adj}(M)$ is also simple when $M$
has coloops.\medskip{}

Dualizing any system of matroid axioms yields another system of matroid
axioms. For example, the spanning-set axioms are obtained from the
independent-set axioms by duality: a set $S\subseteq E(M)$ spans
$M$ if and only if $E(M)\backslash S$ is independent in $M^{\ast}$.
We now dualize the dependent-set axioms.\medskip{}

A collection $\mathfrak{A}\subseteq2^{E}$ is the collection of complements
of the dependent sets of a matroid $N$ on $E$ if and only if
\begin{enumerate}[label=(A\arabic*),start=0]
\item \label{enu:nonspanning0}$E\notin\mathfrak{A}$. 
\item \label{enu:nonspanning1}If $A\in\mathfrak{A}$ and $A'\subseteq A$,
then $A'\in\mathfrak{A}$.
\item \label{enu:nonspanning2}If $A_{1},A_{2}\in\mathfrak{A}$, then either
$A_{1}\cup A_{2}\in\mathfrak{A}$, or $(A_{1}\cap A_{2})\cup\{e\}\in\mathfrak{A}$
for all $e\in E\backslash(A_{1}\cup A_{2})$.
\end{enumerate}
The maximal members of $\mathfrak{A}$ are the complements of the
circuits of $N$, that is, the mp-flats of $N^{\ast}$. For $N=M^{\ast}$,
these conditions characterize the collection of nonspanning sets of
$M$, whose maximal members are the mp-flats of $M$.\medskip{}

Define a map $\beta:2^{E(M)}\rightarrow2^{E(M)}$ by $X\mapsto E(M)\backslash X$,
and for a collection $\mathcal{A}\subseteq2^{E(M)}$, write $\beta\mathcal{A}:=\left\{ \beta(X):X\in\mathcal{A}\right\} $.
We write $\check{X}$ for $\beta(X)$ when $X$ is a single symbol
or a string of digits, as in $\check{C}$ and $\check{78}$, and keep
$\beta$ for collections and compound expressions, as in $\beta\mathcal{A}$
and $\beta(X\cup Y)$. For a seed $\mathfrak{X}$, we also write $\check{\mathfrak{X}}:=\{\beta\mathcal{A}:\mathcal{A}\in\mathfrak{X}\}$.
The map $\beta$ restricts to a bijection from $\mathcal{C}(M^{\ast})$
onto $\mathcal{H}(M)$, and it carries unions to intersections. Since
an intersection of flats is a flat, $\beta(\bigcup\mathcal{A})$ is
a flat of $M$ for $\mathcal{A}\subseteq\mathcal{C}(M^{\ast})$. The
identity $\mathrm{nullity}_{M^{\ast}}(X)=\mathrm{corank}_{M}(\check{X})$
holds for all $X\subseteq E(M)$.\medskip{}

Define a map $\psi:\mathcal{L}(M)\rightarrow2^{\mathcal{H}(M)}$ by
$F\mapsto\left\{ H\in\mathcal{H}(M):F\subseteq H\right\} $. Since
every flat of $M$ is the intersection of the mp-flats including it,
where the empty intersection is understood to be $E(M)$, the map
$\psi$ is injective and reverses inclusion: $F\subseteq F'$ if and
only if $\psi(F')\subseteq\psi(F)$. For $F\in\mathcal{L}(M)$, the
map $\beta$ restricts to a bijection from $\mathcal{C}(M^{\ast}|\check{F})$
onto $\psi(F)$. 
\begin{lem}
\label{lem:floor-cl}As maps on $2^{E(M)}$, we have $\beta\circ\left\lfloor \cdot\right\rfloor _{M^{\ast}}=\mathrm{cl}_{M}\circ\beta$. 
\end{lem}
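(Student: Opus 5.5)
We must show that for every $X\subseteq E(M)$, $E\backslash\lfloor X\rfloor_{M^{\ast}}=\mathrm{cl}_{M}(E\backslash X)$, where $E=E(M)$. Since $\lfloor X\rfloor_{M^{\ast}}$ is defined as $X$ minus the loops and coloops of $M^{\ast}|X$, the plan is to describe separately the elements of $X$ removed as coloops and those removed as loops, and to match each group with the elements of $X$ that lie in $\mathrm{cl}_{M}(\check{X})$. The elements outside $X$ lie in $\check{X}\subseteq\mathrm{cl}_M(\check X)$ and also outside $\lfloor X\rfloor$, so both sides contain them. It therefore suffices to prove, for $e\in X$,
\[
e\in\mathrm{cl}_{M}(\check{X})\iff e\text{ is a loop or a coloop of }M^{\ast}|X .
\]

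\textbf{Coloops.} We use the standard dual formula $r_{M^{\ast}}(Y)=|Y|+r_{M}(E\backslash Y)-r(M)$. The element $e\in X$ is a coloop of $M^{\ast}|X$ exactly when $r_{M^{\ast}}(X\backslash\{e\})=r_{M^{\ast}}(X)-1$. Substituting the formula, this becomes $r_{M}(\check{X}\cup\{e\})=r_{M}(\check{X})$, that is, $e\in\mathrm{cl}_{M}(\check{X})$.

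\textbf{Loops.} The element $e$ is a loop of $M^{\ast}|X$ precisely when $e$ is a loop of $M^{\ast}$, i.e. a coloop of $M$. Since $M$ is assumed to have no coloops, this case is empty. Alternatively, without that assumption, a loop $e$ of $M^{\ast}$ gives $r_{M^{\ast}}(X\backslash\{e\})=r_{M^{\ast}}(X)$, which by the formula forces $r_M(\check X\cup\{e\})=r_M(\check X)+1$, so $e\notin\mathrm{cl}_M(\check X)$.

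The statement is asserted "as maps on $2^{E(M)}$", so it should hold in general. I would therefore state the identity under the section's standing assumption that $M$ has no coloops, which is the case where the loop discussion above is vacuous. For $M$ with coloops, the identity genuinely fails (take $X=\{e\}$ with $e$ a coloop of $M$), so the no-coloops hypothesis is exactly what makes it true. This bookkeeping point is the only real obstacle; the coloop computation is routine once the dual rank formula is invoked.

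Combining the two cases gives $\beta(\lfloor X\rfloor_{M^{\ast}})=\check{X}\cup\bigl(X\cap\mathrm{cl}_M(\check X)\bigr)=\mathrm{cl}_{M}(\check{X})$, which is the claimed identity.
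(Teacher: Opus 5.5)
Your proof is correct, but it takes a different route from the paper. The paper uses the standing hypothesis first: since $M^{\ast}$ is loopless, $\lfloor D\rfloor_{M^{\ast}}$ is the union of the circuits of $M^{\ast}$ included in $D$. Complementation sends these circuits to the mp-flats of $M$ that include $\check{D}$, and their intersection is $\mathrm{cl}_{M}(\check{D})$, because every flat is the intersection of the mp-flats including it.

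You instead argue element by element with the dual rank formula, showing that $e\in X$ is a coloop of $M^{\ast}|X$ exactly when $e\in\mathrm{cl}_{M}(\check{X})$. This separates the two ingredients cleanly. The identity $\beta(X\backslash\{\text{coloops of }M^{\ast}|X\})=\mathrm{cl}_{M}(\check{X})$ holds for every matroid, and the no-coloops hypothesis is needed only to discard the loop part. Your example $X=\{e\}$, with $e$ a coloop of $M$, shows that the hypothesis cannot be dropped.

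The paper's argument is shorter given its setup. It also stays within the correspondence between circuits of $M^{\ast}$ and mp-flats of $M$ and the description of flats as intersections of mp-flats, which is the language used immediately afterward to transport the Gaussian process to the flat lattice.

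One small point: your sentence that the statement ``should hold in general'' is misleading. The lemma is stated under the section's standing assumption that $M$ has no coloops, which is exactly where you end up.
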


\begin{proof}
Let $D\subseteq E(M)$. As $M$ has no coloops, $M^{\ast}$ is loopless,
so $\left\lfloor D\right\rfloor _{M^{\ast}}$ is the union of the
circuits of $M^{\ast}$ included in $D$. Since $\beta$ carries these
circuits to the mp-flats of $M$ including $\check{D}$, the set $\beta\left\lfloor D\right\rfloor _{M^{\ast}}$
is the intersection of those mp-flats, which is $\mathrm{cl}_{M}(\check{D})$
because a flat of $M$ is the intersection of the mp-flats including
it. 
\end{proof}

Through $\beta$, the elimination underlying the Gaussian operations
of Section \ref{sec:Syzygy} becomes the step appearing in \ref{enu:nonspanning2}:
if $D_{1}$ and $D_{2}$ are dependent in $M^{\ast}$ and $e\in D_{1}\cap D_{2}$,
then 
\begin{equation}
\beta\bigl((D_{1}\cup D_{2})\backslash\{e\}\bigr)=(\check{D}_{1}\cap\check{D}_{2})\cup\{e\},\label{eq:beta}
\end{equation}
 where $e\notin\check{D}_{1}\cup\check{D}_{2}$. Combined with Lemma
\ref{lem:floor-cl}, this transports the Gaussian process for $M^{\ast}$
to a process on the flats of $M$.\medskip{}

For a collection $\mathcal{Y}=\left\{ H_{1},\dots,H_{m}\right\} $
of mp-flats of $M$ and an mp-flat $H$, the dual Gaussian process
for $\left(\mathcal{Y},H\right)$ proceeds as the Gaussian process
of Section \ref{sec:Syzygy} does, with the following changes: $H$
plays the role of $C$; $F_{i}$ is initially $H_{i}$; the element
$c_{j}$ is chosen from $H\backslash F_{\lambda_{j}}$; and $F_{i}$
is replaced by $\mathrm{cl}_{M}\bigl((F_{i}\cap F_{\lambda_{j}})\cup\{c_{j}\}\bigr)$
for those $i$ with $c_{j}\notin F_{i}$. Since $\ensuremath{c_{j}\notin F_{\lambda_{j}}}$
and flats of $M$ contain the loops of $M$, $c_{j}$ is not a loop.
Hence each update joins the meet $F_{i}\cap F_{\lambda_{j}}$ with
the rank-$1$ flat $\mathrm{cl}_{M}(\{c_{j}\})$, which is not below
the meet, so the new $F_{i}$, again a flat of $M$, has corank one
less than that of the meet.
\begin{prop}
\label{prop:dual-Gaussian}Let $\mathcal{A}$ be a circuit system
of $M^{\ast}$, and let $C$ be a circuit of $M^{\ast}$. Then $\beta$
carries the Gaussian process for $\left(\mathcal{A},C\right)$, step
by step, to the dual Gaussian process for $\left(\beta\mathcal{A},\check{C}\right)$.
In particular, $C$ is a Gaussian combination of $\mathcal{A}$ if
and only if some execution of the dual Gaussian process for $\left(\beta\mathcal{A},\check{C}\right)$
succeeds.
\end{prop}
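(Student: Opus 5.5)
We will prove the statement by induction on the step number $j$, showing that $\beta$ maps the state of the Gaussian process for $(\mathcal{A},C)$ at step $j$ to the state of the dual Gaussian process for $(\beta\mathcal{A},\check{C})$. The state consists of the index set $\Lambda_{j}$ and the sets $X_{i}$ (respectively $F_{i}$) for $i\in\Lambda_{j}$. The choices are matched as follows. We use the same index $\lambda_{j}$, the same element $c_{j}$, and the same merge representatives. The invariant is $F_{i}=\check{X}_{i}$ for every $i\in\Lambda_{j}$, with the same $\Lambda_{j}$ on both sides.

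At the start, $X_{i}=C_{i}$ and $F_{i}=\check{C}_{i}=H_{i}$, and $\Lambda_{1}=[m]$ on both sides, so the invariant holds.

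Now assume the invariant at step $j$.

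\emph{Stopping conditions.} Since $\beta$ is a bijection on $2^{E(M)}$, we have $C=X_{i}$ if and only if $\check{C}=F_{i}$. The condition $|\Lambda_{j}|\le1$ reads the same on both sides. So the two processes stop, or fail to stop, together.

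\emph{Admissible choices.} By complementation, $c_{j}\in X_{\lambda_{j}}\backslash C$ if and only if $c_{j}\in\check{C}\backslash F_{\lambda_{j}}$. The right-hand condition is exactly the rule $c_{j}\in H\backslash F_{\lambda_{j}}$ of the dual process. Likewise, $c_{j}\in X_{i}$ if and only if $c_{j}\notin F_{i}$, so the same indices $i$ are updated on both sides.

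\emph{Updates.} This is the main point. For an updated index, $X_{i}$ and $X_{\lambda_{j}}$ are dependent in $M^{\ast}$ and both contain $c_{j}$. By \eqref{eq:beta} followed by Lemma \ref{lem:floor-cl},
\[
\beta\bigl\lfloor (X_{i}\cup X_{\lambda_{j}})\backslash\{c_{j}\}\bigr\rfloor_{M^{\ast}}=\mathrm{cl}_{M}\bigl(\beta((X_{i}\cup X_{\lambda_{j}})\backslash\{c_{j}\})\bigr)=\mathrm{cl}_{M}\bigl((F_{i}\cap F_{\lambda_{j}})\cup\{c_{j}\}\bigr).
\]
The right-hand side is exactly the dual update. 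Lemma \ref{lem:floor-cl} needs $M^{\ast}$ to be loopless, which holds because $M$ is assumed to have no coloops. Indices that are not updated are unchanged on both sides.

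\emph{Merges.} Since $\beta$ is injective, two indices satisfy $X_{i}=X_{i'}$ if and only if $F_{i}=F_{i'}$. Hence the merge groups coincide, and choosing the same representatives gives the same $\Lambda_{j+1}$. This completes the inductive step.

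Every execution of either process therefore corresponds to an execution of the other. The correspondence runs in both directions because each admissibility condition is an equivalence. Since the success conditions also correspond under $\beta$, the final claim follows: $C$ is a Gaussian combination of $\mathcal{A}$ if and only if some execution of the dual Gaussian process for $(\beta\mathcal{A},\check{C})$ succeeds.

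The only nontrivial step is the update identity, which uses Lemma \ref{lem:floor-cl} and \eqref{eq:beta}. One point must be kept in view there: in the primal process, $\lfloor\cdot\rfloor$ is applied to a set that is genuinely dependent, which Corollary \ref{cor:Gaussian} guarantees. This ensures the dual update is a proper flat of $M$ rather than $E(M)$.
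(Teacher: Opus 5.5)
Your proposal is correct and follows essentially the paper's route. Both arguments transport every set of the primal process through $\beta$ with the same indices and choices, match the pivot and update conditions by complementation, derive the update identity from (\ref{eq:beta}) and Lemma~\ref{lem:floor-cl}, and use injectivity of $\beta$ for stopping, success, and merges; your explicit induction on the step number simply spells out the paper's ``step by step.'' Your closing remark that the eliminated set is genuinely dependent is true but not needed here, since Lemma~\ref{lem:floor-cl} holds on all of $2^{E(M)}$.
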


\begin{proof}
Apply $\beta$ to every set occurring in the Gaussian process for
$(\mathcal{A},C)$, keeping the indices and the choices, so that $\check{X}_{i}$
plays the role of $F_{i}$. The condition $c_{j}\in X_{\lambda_{j}}\backslash C$
becomes $c_{j}\in\check{C}\backslash\check{X}_{\lambda_{j}}$, and
the condition $c_{j}\in X_{i}$ becomes $c_{j}\notin\check{X}_{i}$.
In the latter case, $c_{j}\in X_{i}\cap X_{\lambda_{j}}$, so by (\ref{eq:beta})
and Lemma \ref{lem:floor-cl}, we have 
\[
\beta\lfloor(X_{i}\cup X_{\lambda_{j}})\backslash\{c_{j}\}\rfloor_{M^{*}}=\mathrm{cl}_{M}((\check{X}_{i}\cap\check{X}_{\lambda_{j}})\cup\{c_{j}\}),
\]
 which is the update in the dual Gaussian process. As $\beta$ is
injective, two sets coincide if and only if their images do, so the
stopping condition, success, and merges correspond. Hence the executions
of the two processes correspond, which gives the last assertion.
\end{proof}

We write $\check{\mathrm{sp}}$, $\check{\mathrm{sp}}^{\infty}$,
and $\check{\mathrm{sn}}$ for $\mathrm{sp}$, $\mathrm{sp}^{\infty}$,
and $\mathrm{sn}$ transported through $\beta$, so that $\check{\mathrm{sp}}(\beta\mathcal{A})=\beta\,\mathrm{sp}(\mathcal{A})$,
$\check{\mathrm{sp}}^{\infty}(\beta\mathcal{A})=\beta\,\mathrm{sp}^{\infty}(\mathcal{A})$,
and $\check{\mathrm{sn}}(\beta\mathcal{A})=\mathrm{sn}(\mathcal{A})$
for a circuit system $\mathcal{A}$ of $M^{\ast}$. By Proposition
\ref{prop:dual-Gaussian}, these can be computed within the flat lattice
of $M$. 
\begin{defn}
\label{def:Comb-Adjoint} The \emph{combinatorial adjoint} of a matroid
$M$ is the matroid $\mathrm{Adj}(M)$ on $\mathcal{H}(M)$ such that
$\beta$ restricts to an isomorphism from $\mathrm{Syz}(M^{\ast})$
onto $\mathrm{Adj}(M)$.
\end{defn}

We identify $\mathrm{Adj}(M)$ with $\mathrm{Syz}(M^{\ast})$ through
$\beta$ and work in whichever of the two is more convenient. Since
$\mathrm{uc}$ and $\mathrm{ex}$ are defined in terms of inclusion,
membership, union, intersection, and difference alone, they commute
with the maps induced by $\beta$ on collections, and hence 
\[
\mathrm{Adj}(M)=\mathrm{Gen}(\check{\mathfrak{X}}),\qquad\check{\mathfrak{X}}=\{\mathcal{Y}\subseteq\mathcal{H}(M):\left|\mathcal{Y}\right|>\check{\mathrm{sn}}(\mathcal{Y})\},
\]
 where $\check{\mathfrak{X}}$ is the image under $\beta$ of the
seed of $\mathrm{Syz}(M^{\ast})$ and $\mathrm{Gen}$ is taken with
$\mathcal{H}(M)$ as the ground set. Thus $\mathrm{Adj}(M)$ is constructed
entirely within the flat lattice of $M$. Unlike $\mathrm{Adj}(M,L)$
in Section \ref{sec:Realizations}, which depends on a realization
$L$, the matroid $\mathrm{Adj}(M)$ is determined by $M$ alone.
Fu, Tang, and Wang ask how to construct a matroid of this kind \cite[Section 7]{Adjoint}. 
\begin{rem}
The construction of Section \ref{sec:Generating-Matroids} also dualizes
under complementation within the ground set. Let $E$ be a finite
set and $\mathfrak{Y}\subseteq2^{E}$ with $E\notin\mathfrak{Y}$.
Replace upward closure by downward closure and the elimination extension
by the operator adjoining $(Y_{1}\cap Y_{2})\cup\{e\}$ whenever $Y_{1},Y_{2}\in\mathfrak{Y}$,
$Y_{1}\cup Y_{2}\notin\mathfrak{Y}$, and $e\in E\backslash(Y_{1}\cup Y_{2})$.
By \ref{enu:nonspanning0}–\ref{enu:nonspanning2}, iterating these
operators yields the nonspanning-set collection of the dual of the
matroid generated by $\left\{ E\backslash Y:Y\in\mathfrak{Y}\right\} $.
Applied to $\left\{ \mathcal{H}(M)\backslash\mathcal{Y}:\mathcal{Y}\in\check{\mathfrak{X}}\right\} $,
this procedure produces the nonspanning-set collection of $\mathrm{Adj}(M)^{\ast}$. 
\end{rem}

All results of Section \ref{sec:Syzygy} carry over through $\beta$,
and we restate only a few of them, beginning with fundamental circuit
systems. 
\begin{lem}
\label{lem:rank-sn-check}Let $B$ be a base of $M$. Then $\beta$
carries the fundamental circuit system of $\check{B}$ in $M^{\ast}$
onto $\mathcal{H}_{B}:=\{\mathrm{cl}_{M}(B\backslash\{e\}):e\in B\}$,
and $\check{\mathrm{sp}}^{\infty}(\mathcal{H}_{B})=\mathcal{H}(M)$. 
\end{lem}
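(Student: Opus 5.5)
The plan is to translate both assertions through $\beta$ and then invoke Proposition \ref{prop:span}. Since $B$ is a base of $M$, the complement $\check{B}$ is a base of $M^{\ast}$. Its fundamental circuit system in $M^{\ast}$ consists of the circuits $C_{M^{\ast}}(e,\check{B})$ for $e\in E(M)\backslash\check{B}=B$, one for each $e\in B$. So the first assertion reduces to the identity
\[
\beta\bigl(C_{M^{\ast}}(e,\check{B})\bigr)=\mathrm{cl}_{M}(B\backslash\{e\})\qquad\text{for each }e\in B.
\]

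To prove this identity, put $H_{e}:=\mathrm{cl}_{M}(B\backslash\{e\})$. Since $B\backslash\{e\}$ is independent of size $k-1$, $H_{e}$ is an mp-flat of $M$. Hence $\check{H}_{e}$ is a circuit of $M^{\ast}$, because $\beta$ restricts to a bijection $\mathcal{C}(M^{\ast})\to\mathcal{H}(M)$. Next, $B\backslash\{e\}\subseteq H_{e}$ gives $\check{H}_{e}\subseteq\check{B}\cup\{e\}$. Also $e\notin H_{e}$, since $B$ is independent, so $e\in\check{H}_{e}$. The circuit $\check{H}_{e}$ therefore lies in $\check{B}\cup\{e\}$ with $e\notin\check{B}$. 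By uniqueness of the fundamental circuit, $\check{H}_{e}=C_{M^{\ast}}(e,\check{B})$. Applying $\beta$ gives the identity, so $\beta$ carries the fundamental circuit system of $\check{B}$ onto $\mathcal{H}_{B}$.

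For the second assertion, Proposition \ref{prop:span} applies to $M^{\ast}$ with the base $\check{B}$; this is legitimate because $M^{\ast}$ is loopless under the standing assumption that $M$ has no coloops. It shows that every circuit of $M^{\ast}$ is a Gaussian combination of $\mathcal{C}_{\check{B}}$. Hence $\mathrm{sp}^{\infty}(\mathcal{C}_{\check{B}})=\mathcal{C}(M^{\ast})$, using Proposition \ref{prop:closure}(\ref{enu:closure1}) and the fact that $\mathrm{sp}^{\infty}$ lands in $\mathcal{C}(M^{\ast})$. Transporting through $\beta$ via the defining relation $\check{\mathrm{sp}}^{\infty}(\beta\mathcal{A})=\beta\,\mathrm{sp}^{\infty}(\mathcal{A})$, which is justified by Proposition \ref{prop:dual-Gaussian}, gives $\check{\mathrm{sp}}^{\infty}(\mathcal{H}_{B})=\beta\mathcal{C}(M^{\ast})=\mathcal{H}(M)$.

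The main point needing care is the identification of fundamental circuits of $M^{\ast}$ with the flats $\mathrm{cl}_{M}(B\backslash\{e\})$, specifically checking that $e\notin H_{e}$ and that $\check{B}\cup\{e\}$ contains a unique circuit of $M^{\ast}$. Both follow from $\check{B}$ being a base of $M^{\ast}$. The rest is formal transport.
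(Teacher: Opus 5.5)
Your proof is correct and follows the paper's route. The paper identifies $\beta C_{M^{\ast}}(e,\check{B})$ with $\mathrm{cl}_{M}(B\backslash\{e\})$ as the unique mp-flat of $M$ including the corank-$1$ set $B\backslash\{e\}$, which is the dual form of your appeal to the uniqueness of the circuit of $M^{\ast}$ in $\check{B}\cup\{e\}$; for the second assertion it cites Lemma~\ref{lem:sn-upper} for $M^{\ast}$, which rests on Proposition~\ref{prop:span} exactly as you use it.
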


\begin{proof}
Let $e\in B$. Both $\beta C_{M^{\ast}}(e,\check{B})$ and $\mathrm{cl}_{M}(B\backslash\{e\})$
are mp-flats of $M$ including the corank-$1$ set $B\backslash\{e\}$,
so they coincide. The second assertion is Lemma \ref{lem:sn-upper}
applied to $M^{\ast}$, under $\beta$.
\end{proof}

Under the complementation $\beta$, Corollaries \ref{cor:syz-sn-nullity}, \ref{cor:rank-3}, and \ref{cor:sn-3-rank}
and Propositions \ref{prop:simple} and \ref{prop:Syz-connected} become
the following. 
\begin{cor}
\label{cor:upper-bounds}Let $\mathcal{Y}\subseteq\mathcal{H}(M)$.
Then 
\[
r_{\mathrm{Adj}(M)}(\mathcal{Y})\le\check{\mathrm{sn}}(\mathcal{Y})\le\mathrm{corank}_{M}\bigl(\bigcap\mathcal{Y}\bigr).
\]
 In particular, $r(\mathrm{Adj}(M))\le\check{\mathrm{sn}}(\mathcal{H}(M))\le k$. 
\end{cor}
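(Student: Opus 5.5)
The statement is the image under $\beta$ of Corollary \ref{cor:syz-sn-nullity} and Corollary \ref{cor:rank-sn}, so the plan is to transport those results rather than argue directly in $\mathcal{L}(M)$. Write $\mathcal{Y}=\beta\mathcal{A}$ with $\mathcal{A}\subseteq\mathcal{C}(M^{\ast})$; this is possible because $\beta$ restricts to a bijection $\mathcal{C}(M^{\ast})\to\mathcal{H}(M)$. Since $M$ is assumed coloopless, $M^{\ast}$ is loopless, and Section \ref{sec:Syzygy} applies to $M^{\ast}$.

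For the first inequality, Definition \ref{def:Comb-Adjoint} says $\beta$ is an isomorphism $\mathrm{Syz}(M^{\ast})\to\mathrm{Adj}(M)$. This gives $r_{\mathrm{Adj}(M)}(\mathcal{Y})=r_{\mathrm{Syz}(M^{\ast})}(\mathcal{A})$. By the transport convention, $\check{\mathrm{sn}}(\mathcal{Y})=\mathrm{sn}(\mathcal{A})$. Proposition \ref{prop:syz-sn} for $M^{\ast}$ then gives $r_{\mathrm{Adj}(M)}(\mathcal{Y})\le\check{\mathrm{sn}}(\mathcal{Y})$.

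For the second inequality, Proposition \ref{prop:sn-nullity} for $M^{\ast}$ gives $\mathrm{sn}(\mathcal{A})\le\mathrm{nullity}_{M^{\ast}}(\bigcup\mathcal{A})$. The identity $\mathrm{nullity}_{M^{\ast}}(X)=\mathrm{corank}_{M}(\check{X})$ turns the right side into $\mathrm{corank}_{M}(\beta(\bigcup\mathcal{A}))$. Because $\beta$ carries unions to intersections, $\beta(\bigcup\mathcal{A})=\bigcap\beta\mathcal{A}=\bigcap\mathcal{Y}$. The only point needing care is the empty case: if $\mathcal{Y}=\emptyset$, then $\bigcup\mathcal{A}=\emptyset$ and the empty intersection is $E(M)$, of corank $0$, which is consistent.

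For the final assertion, take $\mathcal{Y}=\mathcal{H}(M)$. Then $r(\mathrm{Adj}(M))=r_{\mathrm{Adj}(M)}(\mathcal{H}(M))\le\check{\mathrm{sn}}(\mathcal{H}(M))\le\mathrm{corank}_{M}(\bigcap\mathcal{H}(M))\le r(M)=k$. One can instead get $\check{\mathrm{sn}}(\mathcal{H}(M))\le|\mathcal{H}_{B}|=k$ directly from Lemma \ref{lem:rank-sn-check}, since $|E(M^{\ast})|-r(M^{\ast})=k$.

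I expect no real obstacle; the argument is bookkeeping through $\beta$. The step most worth checking is the corank identity. I would verify it from $r_{M^{\ast}}(X)=|X|+r_{M}(\check{X})-r(M)$, which gives $|X|-r_{M^{\ast}}(X)=r(M)-r_{M}(\check{X})$.
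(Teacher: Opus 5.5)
Your proposal is correct and takes the paper's route: the paper states this corollary without separate proof, as the image under $\beta$ of Corollaries \ref{cor:syz-sn-nullity} and \ref{cor:rank-sn} for the loopless matroid $M^{\ast}$. Your argument rests on the same three facts, namely $\check{\mathrm{sn}}(\beta\mathcal{A})=\mathrm{sn}(\mathcal{A})$, $\beta(\bigcup\mathcal{A})=\bigcap\beta\mathcal{A}$, and $\mathrm{nullity}_{M^{\ast}}(X)=\mathrm{corank}_{M}(\check{X})$, and your handling of the empty case and of the final bound $\check{\mathrm{sn}}(\mathcal{H}(M))\le k$ is sound.
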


\begin{cor}
\label{cor:lower-bound}If $k\ge3$, then $r(\mathrm{Adj}(M))\ge3$. 
\end{cor}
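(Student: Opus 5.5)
The plan is to transport Corollary~\ref{cor:rank-3} through $\beta$. By Definition~\ref{def:Comb-Adjoint}, $\beta$ restricts to an isomorphism from $\mathrm{Syz}(M^{\ast})$ onto $\mathrm{Adj}(M)$, so $r(\mathrm{Adj}(M))=r(\mathrm{Syz}(M^{\ast}))$. Throughout this section $M$ has no coloops, so $M^{\ast}$ is loopless and the results of Section~\ref{sec:Syzygy} apply to $M^{\ast}$. The relevant parameter for $M^{\ast}$ is $|E(M^{\ast})|-r(M^{\ast})=n-(n-k)=k$.

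With this in hand, the hypothesis $k\ge3$ is exactly the hypothesis $n-r(M^{\ast})\ge3$ of Corollary~\ref{cor:rank-3} for $M^{\ast}$. That corollary then gives $r(\mathrm{Syz}(M^{\ast}))\ge3$, and hence $r(\mathrm{Adj}(M))\ge3$.

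The one point to check is the standing assumption. If $M$ has coloops, the statement should follow from the decomposition $\mathrm{Adj}(M)\simeq\mathrm{Adj}(M\backslash\mathrm{cl}_{M^{\ast}}(\emptyset))\oplus U_{p,p}$. Alternatively, and more simply, $\mathrm{Syz}(M^{\ast})$ is defined for every matroid as $\mathrm{Syz}(M^{\ast}\backslash\mathrm{cl}(\emptyset))\oplus U_{p,p}$. Its rank is the rank of the loopless part plus $p$. Deleting the $p$ loops of $M^{\ast}$ lowers $|E|-r$ by $p$, leaving $k-p$. So if $k-p\ge3$ we apply Corollary~\ref{cor:rank-3} to the loopless part. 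Otherwise we fall back on Corollary~\ref{cor:rank-sn}-type counting, or simply on the fact that fundamental circuits of a base already give $k$ independent elements. This bookkeeping is the only mild obstacle. Since the section assumes no coloops, the main argument is the one-line transport above.

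For a concrete witness, one can also take a base $B$ of $M$ and three elements $e_1,e_2,e_3\in B$. Then $\{\mathrm{cl}_M(B\backslash\{e_i\})\}_{i=1}^{3}\subseteq\mathcal{H}_B$ is independent in $\mathrm{Adj}(M)$. This is the $\beta$-image of the independent triple of fundamental circuits of $\check B$ in $M^{\ast}$ used in the proof of Corollary~\ref{cor:rank-3}, via Lemma~\ref{lem:rank-sn-check}.
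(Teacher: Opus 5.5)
Your argument is correct and is the paper's: the corollary is Corollary~\ref{cor:rank-3} applied to the loopless matroid $M^{\ast}$, for which $|E(M^{\ast})|-r(M^{\ast})=k$, transported through $\beta$. The coloop digression is unnecessary under the section's standing no-coloop assumption, and its fallback claim that the fundamental circuits of a base always give $k$ independent elements is false in general (for $V_{8}$ a fundamental circuit system has four members while $r(\mathrm{Syz}(V_{8}))=3$); in the residual cases $k-p\le2$ the needed rank comes from simplicity, as in the proof of Proposition~\ref{prop:rk3-Adjoint}.
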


\begin{cor}
\label{cor:sn-check-3-rank}If $M$ is connected and $\check{\mathrm{sn}}(\mathcal{H}(M))\le3$,
then $r(\mathrm{Adj}(M))=\check{\mathrm{sn}}(\mathcal{H}(M))$.
\end{cor}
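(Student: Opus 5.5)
The plan is to derive Corollary \ref{cor:sn-check-3-rank} from Corollary \ref{cor:sn-3-rank} applied to $M^{\ast}$, transporting everything through $\beta$. Three things have to be checked:
\begin{enumerate}
\item the hypotheses on $M$ translate into the hypotheses of Corollary \ref{cor:sn-3-rank} for $M^{\ast}$;
\item the quantity $\check{\mathrm{sn}}(\mathcal{H}(M))$ equals $\mathrm{sn}(\mathcal{C}(M^{\ast}))$;
\item the rank of $\mathrm{Adj}(M)$ equals that of $\mathrm{Syz}(M^{\ast})$.
\end{enumerate}

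For (1), recall the standing assumption of this section that $M$ has no coloops, so $M^{\ast}$ is loopless. This is the setting in which Section \ref{sec:Syzygy} applies to $M^{\ast}$. A matroid is connected if and only if its dual is, so $M^{\ast}$ is connected whenever $M$ is.

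For (2), note that $\beta$ restricts to a bijection $\mathcal{C}(M^{\ast})\to\mathcal{H}(M)$. Hence $\beta\,\mathcal{C}(M^{\ast})=\mathcal{H}(M)$. By the defining property $\check{\mathrm{sn}}(\beta\mathcal{A})=\mathrm{sn}(\mathcal{A})$, which is justified by Proposition \ref{prop:dual-Gaussian}, we get $\check{\mathrm{sn}}(\mathcal{H}(M))=\mathrm{sn}(\mathcal{C}(M^{\ast}))$. So the hypothesis $\check{\mathrm{sn}}(\mathcal{H}(M))\le3$ becomes $\mathrm{sn}(\mathcal{C}(M^{\ast}))\le3$.

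For (3), Definition \ref{def:Comb-Adjoint} says $\beta$ is an isomorphism $\mathrm{Syz}(M^{\ast})\to\mathrm{Adj}(M)$, so $r(\mathrm{Adj}(M))=r(\mathrm{Syz}(M^{\ast}))$. Corollary \ref{cor:sn-3-rank}, applied to the connected loopless matroid $M^{\ast}$, then gives
\[
r(\mathrm{Adj}(M))=r(\mathrm{Syz}(M^{\ast}))=\mathrm{sn}(\mathcal{C}(M^{\ast}))=\check{\mathrm{sn}}(\mathcal{H}(M)).
\]

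The only delicate point is whether the corollary is meant to cover an $M$ with coloops. If $M$ is connected and has a coloop $e$, then $M$ is the single coloop $U_{1,1}$. In that case $\mathcal{H}(M)=\{\emptyset\}$, $\mathrm{Adj}(M)\simeq U_{1,1}$ by the splitting off of coloops, and $\check{\mathrm{sn}}$ of a single mp-flat is $1$, so the equality holds directly. This edge case is the main thing I would double-check; otherwise the argument is a direct transport of the earlier corollary.
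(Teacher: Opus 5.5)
Your proposal is correct and is essentially the paper's own argument: the paper obtains this corollary as Corollary \ref{cor:sn-3-rank} applied to the connected loopless matroid $M^{\ast}$ and transported through $\beta$, using $r(\mathrm{Adj}(M))=r(\mathrm{Syz}(M^{\ast}))$ and $\check{\mathrm{sn}}(\mathcal{H}(M))=\mathrm{sn}(\mathcal{C}(M^{\ast}))$. Your coloop case ($M\simeq U_{1,1}$) is not needed, because the section assumes $M$ has no coloops; your treatment of it is harmless.
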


\begin{prop}
\label{prop:Adj-simple}$\mathrm{Adj}(M)$ is simple.
\end{prop}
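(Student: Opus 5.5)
The plan is to obtain the statement from Proposition~\ref{prop:simple}, transported through $\beta$ by Definition~\ref{def:Comb-Adjoint}. Two cases arise, according to whether $M$ has coloops.

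First assume $M$ has no coloops, which is the standing assumption of this section. Then $M^{\ast}$ is loopless, so the construction of Section~\ref{sec:Syzygy} applies directly to $M^{\ast}$, and Proposition~\ref{prop:simple} shows that $\mathrm{Syz}(M^{\ast})$ is simple. By Definition~\ref{def:Comb-Adjoint}, $\beta$ restricts to a matroid isomorphism from $\mathrm{Syz}(M^{\ast})$ onto $\mathrm{Adj}(M)$. Simplicity is invariant under isomorphism, since it only asks that there be no dependent set of size at most $2$. Hence $\mathrm{Adj}(M)$ is simple.

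Equivalently, one can argue inside the flat lattice. We have $\mathrm{Adj}(M)=\mathrm{Gen}(\check{\mathfrak{X}})$, and $\beta$ preserves the cardinality of collections, so $\mathrm{girth}(\check{\mathfrak{X}})=\mathrm{girth}(\mathfrak{X})\ge3$ by the proof of Proposition~\ref{prop:simple}. Proposition~\ref{prop:simple-gen}(\ref{enu:girth-1}) then gives simplicity.

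For general $M$, possibly with coloops, use the decomposition $\mathrm{Adj}(M)\simeq\mathrm{Adj}(M\backslash\mathrm{cl}_{M^{\ast}}(\emptyset))\oplus U_{p,p}$ from the start of the section. The restriction $M\backslash\mathrm{cl}_{M^{\ast}}(\emptyset)$ has no coloops: deleting coloops leaves the dual as $M^{\ast}$ contracted by its loops, which is loopless. So the first summand is simple by the case above. The free matroid $U_{p,p}$ is simple, and a direct sum of simple matroids is simple, because every circuit of a direct sum lies in one summand.

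No step is a genuine obstacle. The only point needing care is confirming that removing the coloops leaves a coloop-free matroid, and that is immediate from duality.
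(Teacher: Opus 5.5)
Your proof is correct and follows the paper's route. The paper states this proposition as Proposition~\ref{prop:simple} transported through $\beta$ to the coloop-free case, and handles coloops exactly as you do, via $\mathrm{Adj}(M)\simeq\mathrm{Adj}(M\backslash\mathrm{cl}_{M^{\ast}}(\emptyset))\oplus U_{p,p}$ and the simplicity of $U_{p,p}$.
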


\begin{prop}
\label{prop:Adj-connected}Let $M$ be loopless. Then $\mathrm{Adj}(M)$
is connected if and only if $M$ is connected. 
\end{prop}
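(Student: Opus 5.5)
The plan is to transport Proposition \ref{prop:Syz-connected} through $\beta$, as with the preceding corollaries. By Definition \ref{def:Comb-Adjoint}, $\beta$ restricts to an isomorphism $\mathrm{Syz}(M^{\ast})\to\mathrm{Adj}(M)$. Connectedness is invariant under isomorphism, so $\mathrm{Adj}(M)$ is connected if and only if $\mathrm{Syz}(M^{\ast})$ is.

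The next step is to check the hypotheses of Proposition \ref{prop:Syz-connected} for the matroid $M^{\ast}$. That proposition is stated in Section \ref{sec:Syzygy}, where the matroid is assumed loopless, and it additionally requires the absence of coloops. Under the standing assumption of this section, $M$ has no coloops, so $M^{\ast}$ is loopless. Under the hypothesis of the present statement, $M$ is loopless, so $M^{\ast}$ has no coloops. Hence Proposition \ref{prop:Syz-connected} applies to $M^{\ast}$ and shows that $\mathrm{Syz}(M^{\ast})$ is connected if and only if $M^{\ast}$ is connected.

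The last step uses the standard fact that a matroid is connected if and only if its dual is, since separators of $M$ and of $M^{\ast}$ coincide. Combining the three equivalences gives the claim.

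The only delicate point is the bookkeeping of loops and coloops. The reduction at the start of this section splits off coloops of $M$ as free summands $U_{p,p}$. If we did not assume $M$ coloopless, such summands would disconnect $\mathrm{Adj}(M)$ whenever there is more than one component, so the standing assumption must be invoked explicitly. Likewise, the loopless hypothesis in the statement is exactly what rules out coloops of $M^{\ast}$: a loop $e$ of $M$ lies in no mp-flat complement. Apart from this, the argument is a direct transfer, and I expect no real obstacle.
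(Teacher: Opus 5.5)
Your proposal is correct and follows the paper's route. The paper obtains this proposition by transporting Proposition~\ref{prop:Syz-connected} for $M^{\ast}$ through $\beta$: the standing coloopless assumption makes $M^{\ast}$ loopless, the loopless hypothesis makes $M^{\ast}$ coloopless, and $M$ is connected exactly when $M^{\ast}$ is.
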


Propositions \ref{prop:Syz-connected} and \ref{prop:Adj-connected}
establish the connectedness statement of Section \ref{sec:Introduction}.
Indeed, a connected matroid with at least two elements has neither
loops nor coloops. If $M$ has at most one element, then so do $\mathrm{Syz}(M)$
and $\mathrm{Adj}(M)$, and matroids with at most one element are
connected by convention.
\begin{cor}
Let $M=M_{1}\oplus M_{2}$. Then $\mathrm{Adj}(M)\simeq\mathrm{Adj}(M_{1})\oplus\mathrm{Adj}(M_{2})$. 
\end{cor}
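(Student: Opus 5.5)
The plan is to deduce the statement from Corollary~\ref{cor:Syz-split} by duality, tracking the complementation $\beta$ carefully. Write $E_{l}=E(M_{l})$ for $l=1,2$. Since $M^{\ast}=M_{1}^{\ast}\oplus M_{2}^{\ast}$, Corollary~\ref{cor:Syz-split} gives $\mathrm{Syz}(M^{\ast})=\mathrm{Syz}(M_{1}^{\ast})\oplus\mathrm{Syz}(M_{2}^{\ast})$ on $\mathcal{C}(M^{\ast})=\mathcal{C}(M_{1}^{\ast})\sqcup\mathcal{C}(M_{2}^{\ast})$. This requires no looplessness assumption, because $\mathrm{Syz}$ was defined for arbitrary matroids at the start of Section~\ref{sec:Syzygy} with loops split off. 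It remains to identify each summand with $\mathrm{Adj}(M_{l})$ compatibly with the global $\beta$.

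Let $\beta$ denote complementation in $E(M)$ and $\beta_{l}$ complementation in $E_{l}$. For a circuit $C$ of $M_{1}^{\ast}$, we have $\beta(C)=\beta_{1}(C)\cup E_{2}$. Moreover, the mp-flats of $M$ are exactly the sets $H_{1}\cup E_{2}$ with $H_{1}\in\mathcal{H}(M_{1})$ and the sets $E_{1}\cup H_{2}$ with $H_{2}\in\mathcal{H}(M_{2})$. Thus $\mathcal{H}(M)=\mathcal{H}(M_{1})\sqcup\mathcal{H}(M_{2})$ via the bijections $\iota_{1}\colon H_{1}\mapsto H_{1}\cup E_{2}$ and $\iota_{2}\colon H_{2}\mapsto E_{1}\cup H_{2}$, and on $\mathcal{C}(M_{l}^{\ast})$ we get $\beta=\iota_{l}\circ\beta_{l}$.

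By Definition~\ref{def:Comb-Adjoint}, $\beta_{l}$ is an isomorphism $\mathrm{Syz}(M_{l}^{\ast})\to\mathrm{Adj}(M_{l})$ and $\beta$ is an isomorphism $\mathrm{Syz}(M^{\ast})\to\mathrm{Adj}(M)$. Composing, the map $\iota_{1}\sqcup\iota_{2}$ carries $\mathrm{Adj}(M_{1})\oplus\mathrm{Adj}(M_{2})$ onto $\beta\bigl(\mathrm{Syz}(M_{1}^{\ast})\oplus\mathrm{Syz}(M_{2}^{\ast})\bigr)=\beta\,\mathrm{Syz}(M^{\ast})=\mathrm{Adj}(M)$, which gives the required isomorphism.

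The only point needing care is the standing convention that $M$ has no coloops in this section. The statement concerns arbitrary $M$, with coloops split off as the free summand $U_{p,p}$. I would avoid this issue by working directly with $\mathrm{Syz}(M^{\ast})$ as defined for arbitrary matroids, where Corollary~\ref{cor:Syz-split} holds since loops split off consistently on both sides. Then I would note that Definition~\ref{def:Comb-Adjoint} makes sense for every $M$. This bookkeeping is the main, though mild, obstacle; otherwise the argument is a direct transport of structure.
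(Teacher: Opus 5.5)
Your proposal is correct and follows the paper's proof: describe the mp-flats of $M$ as $H\cup E(M_{2})$ and $E(M_{1})\cup H$, match this splitting under $\beta$ with $\mathcal{C}(M^{\ast})=\mathcal{C}(M_{1}^{\ast})\sqcup\mathcal{C}(M_{2}^{\ast})$, and apply Corollary~\ref{cor:Syz-split} to $M^{\ast}$. Your identity $\beta=\iota_{l}\circ\beta_{l}$ spells out what the paper leaves implicit. The paper avoids your loop bookkeeping by using the section's standing assumption that $M$, and hence $M_{1}$ and $M_{2}$, has no coloops, so Corollary~\ref{cor:Syz-split}, proved for loopless matroids, applies to $M^{\ast}$ directly.
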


\begin{proof}
A flat of $M$ is a union of a flat of $M_{1}$ and a flat of $M_{2}$,
and ranks add, so the mp-flats of $M$ are the sets $H\cup E(M_{2})$
with $H\in\mathcal{H}(M_{1})$ and $E(M_{1})\cup H$ with $H\in\mathcal{H}(M_{2})$.
Under $\beta$, this splitting corresponds to $\mathcal{C}(M^{\ast})=\mathcal{C}(M^{\ast}_{1})\sqcup\mathcal{C}(M^{\ast}_{2})$,
since $M^{\ast}=M^{\ast}_{1}\oplus M^{\ast}_{2}$. Now apply Corollary
\ref{cor:Syz-split} to $M^{\ast}$, noting that $M_{1}$ and $M_{2}$
have no coloops.
\end{proof}

\begin{lem}
For distinct mp-flats $H_{1}$ and $H_{2}$ of $M$, 
\[
\mathrm{cl}_{\mathrm{Adj}(M)}(\{H_{1},H_{2}\})=\check{\mathrm{sp}}^{\infty}(\{H_{1},H_{2}\}).
\]
 
\end{lem}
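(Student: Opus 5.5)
This is Lemma~\ref{lem:cl-two} applied to $M^{\ast}$ and transported through $\beta$. Since $M$ has no coloops, $M^{\ast}$ is loopless, so the results of Section~\ref{sec:Syzygy} hold for $M^{\ast}$. The map $\beta$ restricts to a bijection $\mathcal{C}(M^{\ast})\to\mathcal{H}(M)$, so there are unique distinct circuits $C_{1},C_{2}$ of $M^{\ast}$ with $\check{C}_{1}=H_{1}$ and $\check{C}_{2}=H_{2}$. I will compute both sides of the claimed equality as $\beta$-images of the two sides of Lemma~\ref{lem:cl-two}.

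For the left side: by Definition~\ref{def:Comb-Adjoint}, $\beta$ restricts to an isomorphism from $\mathrm{Syz}(M^{\ast})$ onto $\mathrm{Adj}(M)$. A matroid isomorphism carries closures to closures, so
\[
\mathrm{cl}_{\mathrm{Adj}(M)}(\{H_{1},H_{2}\})=\beta\,\mathrm{cl}_{\mathrm{Syz}(M^{\ast})}(\{C_{1},C_{2}\}).
\]
For the right side: by the defining identity $\check{\mathrm{sp}}^{\infty}(\beta\mathcal{A})=\beta\,\mathrm{sp}^{\infty}(\mathcal{A})$, which is justified by Proposition~\ref{prop:dual-Gaussian}, we get
\[
\check{\mathrm{sp}}^{\infty}(\{H_{1},H_{2}\})=\beta\,\mathrm{sp}^{\infty}(\{C_{1},C_{2}\}).
\]
Lemma~\ref{lem:cl-two} for $M^{\ast}$ says $\mathrm{cl}_{\mathrm{Syz}(M^{\ast})}(\{C_{1},C_{2}\})=\mathrm{sp}^{\infty}(\{C_{1},C_{2}\})$. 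Applying $\beta$ to this equality gives the statement.

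The one point needing care is whether $\check{\mathrm{sp}}^{\infty}$ is genuinely well defined as an operator on collections of mp-flats, rather than only notation. This holds because $\beta$ is a bijection on these collections, so every collection of mp-flats is $\beta\mathcal{A}$ for a unique circuit system $\mathcal{A}$ of $M^{\ast}$. Proposition~\ref{prop:dual-Gaussian} then identifies it with the dual Gaussian process computed in the flat lattice of $M$. No other obstacle is expected.
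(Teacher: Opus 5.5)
Your proposal is correct and is exactly the paper's argument: the paper proves this lemma in one line as Lemma~\ref{lem:cl-two} applied to $M^{\ast}$ and transported through $\beta$, under the section's standing assumption that $M$ has no coloops. One small point is that $\check{\mathrm{sp}}^{\infty}(\beta\mathcal{A})=\beta\,\mathrm{sp}^{\infty}(\mathcal{A})$ holds by the definition of the transported operator, so Proposition~\ref{prop:dual-Gaussian} is needed only to interpret it inside $\mathcal{L}(M)$, not to justify the identity.
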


\begin{proof}
This is Lemma \ref{lem:cl-two} under $\beta$.
\end{proof}

\begin{cor}
\label{cor:Adj-rk2-flats}Let $H_{1}$ and $H_{2}$ be distinct mp-flats
of $M$ and put $F=H_{1}\cap H_{2}$. If $\mathrm{corank}_{M}(F)=2$,
then $\psi(F)$ is a rank-$2$ flat of $\mathrm{Adj}(M)$; moreover,
$\psi(F)=\left\{ H_{1},H_{2}\right\} $ if $H_{1}\cup H_{2}=E(M)$,
and $\psi(F)$ contains at least three members otherwise. If $\mathrm{corank}_{M}(F)\ge3$,
then $\left\{ H_{1},H_{2}\right\} $ is a rank-$2$ flat of $\mathrm{Adj}(M)$.
Every rank-$2$ flat of $\mathrm{Adj}(M)$ arises in one of these
ways.
\end{cor}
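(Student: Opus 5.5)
The plan is to transport Corollary \ref{cor:rank2-flats} through $\beta$, applied to $M^{\ast}$. Since $M$ has no coloops, $M^{\ast}$ is loopless and the corollary applies. Put $C_{l}=\check{H}_{l}$, which are distinct circuits of $M^{\ast}$, and $D=C_{1}\cup C_{2}$. Then $\check{D}=H_{1}\cap H_{2}=F$. The identity $\mathrm{nullity}_{M^{\ast}}(D)=\mathrm{corank}_{M}(F)$ turns the dichotomy $\mathrm{nullity}(D)=2$ versus $\mathrm{nullity}(D)\neq2$ into $\mathrm{corank}_{M}(F)=2$ versus $\mathrm{corank}_{M}(F)\ne2$.

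Next we check that $\mathrm{corank}_{M}(F)\ge2$, so that the second case means exactly $\mathrm{corank}_{M}(F)\ge3$. By Lemma \ref{lem:one-circ} and the remark after it, $\mathrm{nullity}_{M^{\ast}}(D)\ge2$ because $D$ includes two distinct circuits. Alternatively, $F$ is a flat properly contained in the mp-flat $H_{1}$.

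In the first case, Corollary \ref{cor:rank2-flats} says that $\mathcal{C}(M^{\ast}|D)$ is a rank-$2$ flat of $\mathrm{Syz}(M^{\ast})$. The restriction of $\beta$ maps $\mathcal{C}(M^{\ast}|\check{F})$ bijectively onto $\psi(F)$, as noted before Lemma \ref{lem:floor-cl}. Since $\beta$ is an isomorphism $\mathrm{Syz}(M^{\ast})\to\mathrm{Adj}(M)$, it follows that $\psi(F)$ is a rank-$2$ flat of $\mathrm{Adj}(M)$.

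For the cardinality refinement we use the last sentence of Lemma \ref{lem:sp-two}. It says $\mathcal{C}(M^{\ast}|D)=\{C_{1},C_{2}\}$ if $C_{1}\cap C_{2}=\emptyset$, and $|\mathcal{C}(M^{\ast}|D)|\ge3$ otherwise. Because $\beta$ turns intersections into unions, $C_{1}\cap C_{2}=\emptyset$ is equivalent to $H_{1}\cup H_{2}=E(M)$. This gives $\psi(F)=\{H_{1},H_{2}\}$ in that case and $|\psi(F)|\ge3$ otherwise.

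In the second case, Corollary \ref{cor:rank2-flats} says $\{C_{1},C_{2}\}$ is a rank-$2$ flat, and its image under $\beta$ is $\{H_{1},H_{2}\}$.

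Finally, the "every rank-$2$ flat" clause also transfers. Any rank-$2$ flat of $\mathrm{Adj}(M)$ is the image under $\beta$ of a rank-$2$ flat of $\mathrm{Syz}(M^{\ast})$. That flat arises from some pair $C_{1},C_{2}$, and we take $H_{l}=\check{C}_{l}$.

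No step is a real obstacle. The only care needed is to confirm two correspondences. First, $\beta$ sends $\mathcal{C}(M^{\ast}|D)$ exactly to $\psi(F)$: a circuit $C\subseteq D$ corresponds to an mp-flat $\check{C}\supseteq F$. Second, disjointness of $C_{1},C_{2}$ corresponds to $H_{1}\cup H_{2}=E(M)$.
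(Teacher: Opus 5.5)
Your proposal is correct and follows essentially the same route as the paper: set $C_{l}=\check{H}_{l}$, note $\mathrm{corank}_{M}(F)\ge2$, and transport Corollary \ref{cor:rank2-flats} and Lemma \ref{lem:sp-two} through $\beta$. Under $\beta$, nullity $2$ becomes corank $2$, disjointness becomes $H_{1}\cup H_{2}=E(M)$, and $\mathcal{C}(M^{\ast}|\check{F})$ maps onto $\psi(F)$.
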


\begin{proof}
Put $C_{i}=\check{H}_{i}$ for $i=1,2$, so $C_{1}\cup C_{2}=\check{F}$,
and note that $\mathrm{corank}_{M}(F)\geq2$ since $H_{1}\neq H_{2}$.
Apply Corollary \ref{cor:rank2-flats} and Lemma \ref{lem:sp-two}
under $\beta$: the condition $\mathrm{nullity}_{M^{\ast}}(C_{1}\cup C_{2})=2$
becomes $\mathrm{corank}_{M}(F)=2$, the condition $C_{1}\cap C_{2}=\emptyset$
becomes $H_{1}\cup H_{2}=E(M)$, and $\beta$ carries $\mathcal{C}(M^{\ast}|\check{F})$
onto $\psi(F)$. 
\end{proof}

\begin{prop}
\label{prop:rk3-Adjoint}Let $M$ be a matroid of rank $3$, with
or without coloops. Then $\mathrm{Adj}(M)$ has rank $3$ and is an
adjoint of $M$. 
\end{prop}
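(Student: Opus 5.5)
First reduce to the coloop-free case. If $M$ has coloops, the decomposition $\mathrm{Adj}(M)\simeq\mathrm{Adj}(M\backslash\mathrm{cl}_{M^{\ast}}(\emptyset))\oplus U_{p,p}$ applies. For adjointness, I will check that the embedding constructed below still works when the $p$ mp-flats $E(M)\backslash\{e\}$ are added as free points. Since $M$ and $M\backslash\mathrm{cl}_{M^\ast}(\emptyset)$ have the same flat lattice up to the obvious bijection, it is cleaner to argue directly on $\mathcal{L}(M)$ using only Corollaries \ref{cor:upper-bounds}, \ref{cor:lower-bound}, and \ref{cor:Adj-rk2-flats}. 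These carry over to the general case because the free summand contributes rank-$1$ pieces compatibly. The main line is therefore: (i) show $r(\mathrm{Adj}(M))=3$; (ii) build the order-preserving injection $\mathcal{L}(M)'\to\mathcal{L}(\mathrm{Adj}(M))$.

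For (i), Corollary \ref{cor:upper-bounds} gives $r(\mathrm{Adj}(M))\le k=3$ and Corollary \ref{cor:lower-bound} gives $r(\mathrm{Adj}(M))\ge3$. So the rank is $3$, and $\mathrm{Adj}(M)$ is simple by Proposition \ref{prop:Adj-simple}.

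For (ii), define $\phi(F):=\psi(F)$ for each flat $F$ of $M$. This sends $E(M)\mapsto\emptyset$ (the bottom flat of $\mathrm{Adj}(M)$, which is empty since the matroid is simple), and each mp-flat $H\mapsto\{H\}$, a rank-$1$ flat. This gives the bijection of atoms of $\mathcal{L}(M)'$ onto atoms of $\mathcal{L}(\mathrm{Adj}(M))$. A corank-$2$ flat $F$ (rank $1$ in $M$) is included in at least two mp-flats $H_1,H_2$ with $H_1\cap H_2=F$: in rank $3$, a rank-$1$ flat $F$ lies on at least two lines unless $E(M)\backslash F$ is a single parallel class, in which case $F\cup\{x\}$ spans, a contradiction. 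Hence $\psi(F)$ is a rank-$2$ flat by Corollary \ref{cor:Adj-rk2-flats}. Finally, $\mathrm{cl}_M(\emptyset)$ (corank $3$) maps to $\mathcal{H}(M)$, the whole ground set, which is the top flat.

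Injectivity and order preservation follow from the corresponding properties of $\psi$. Strictness across ranks is automatic because the ranks of images are $0,1,2,3$ according to corank.

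The main obstacle is making sure that every corank-$2$ flat really has at least two mp-flats over it, together with the precise handling of coloops. In rank $3$ with a coloop $e$, a rank-$1$ flat could be $\mathrm{cl}(\{e\})$. Such a flat lies in the lines joining $e$ to every other point, which number at least two as long as $M\backslash e$ has rank $2$ and is not a single parallel class. If $M\backslash e$ were a single parallel class, $M$ would have rank at most $2$, so this case cannot occur. I would verify this case separately, using that $\psi(\mathrm{cl}(\{e\}))$ then consists of mp-flats of the form $E(M)\backslash\{e\}$ together with others. Its rank-$2$ flatness follows from Corollary \ref{cor:Adj-rk2-flats}, which the paper states for arbitrary $M$ through the $\beta$ transport, provided the coloop summand is handled via $\mathrm{Adj}(M)\simeq\mathrm{Adj}(M')\oplus U_{p,p}$.
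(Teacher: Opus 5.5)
Your treatment of the coloop-free case is the paper's argument. The rank is $3$ by Corollaries~\ref{cor:upper-bounds} and~\ref{cor:lower-bound}, and $\psi(F)$ is a flat of $\mathrm{Adj}(M)$ for every flat $F$. The only nontrivial case is a rank-$1$ flat, written as $H_1\cap H_2$ and handled by Corollary~\ref{cor:Adj-rk2-flats}.

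The gap is the case with coloops, which the statement explicitly includes.

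\begin{itemize}
\item Your reduction rests on the claim that $M$ and $N:=M\backslash\mathrm{cl}_{M^{\ast}}(\emptyset)$ have the same flat lattice. This is false: deleting the $p$ coloops lowers the rank to $3-p$, and $\mathcal{L}(M)\simeq\mathcal{L}(N)\times\mathcal{L}(U_{p,p})$. Deleting loops is what preserves the lattice.
\item You cannot quote Corollaries~\ref{cor:upper-bounds}, \ref{cor:lower-bound}, and~\ref{cor:Adj-rk2-flats} for $M$. They are proved under the standing hypothesis of Section~\ref{sec:Combinatorial-Adjoints} that $M$ has no coloops, contrary to what you say about Corollary~\ref{cor:Adj-rk2-flats}. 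Your phrase ``the free summand contributes rank-$1$ pieces compatibly'' is exactly the step that needs proof.
\item Even the rank is not covered. We have $r(\mathrm{Adj}(M))=r(\mathrm{Adj}(N))+p$, and since $r(N)=3-p\le 2$, Corollary~\ref{cor:lower-bound} says nothing about $\mathrm{Adj}(N)$. You need a separate observation: $\mathrm{Adj}(N)$ is simple on at least two points when $r(N)=2$, and is a single point or empty when $r(N)\le 1$. This gives $r(\mathrm{Adj}(N))=r(N)$.
\item Your last paragraph misidentifies $\psi(\mathrm{cl}_M(\{e\}))$ for a coloop $e$. The mp-flat $E(M)\backslash\{e\}$ is precisely the one that does not include $\mathrm{cl}_M(\{e\})$.
\end{itemize}

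To close the gap you have two options.

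\textbf{Follow the paper.} Combine $\mathrm{Adj}(M)\simeq\mathrm{Adj}(N)\oplus U_{p,p}$ with $\mathcal{L}(M)\simeq\mathcal{L}(N)\times\mathcal{L}(U_{p,p})$ and the self-duality of the Boolean lattice $\mathcal{L}(U_{p,p})$ under complementation. This reduces everything to showing that $\mathrm{Adj}(N)$ has rank $r(N)\le 2$ and is an adjoint of $N$, which is a direct check.

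\textbf{Make your direct route explicit.} Let $P$ be the set of coloops. The mp-flats of $M$ are of two kinds: $H\cup P$ with $H\in\mathcal{H}(N)$, corresponding to the points of $\mathrm{Adj}(N)$, and $E(M)\backslash\{e\}$ with $e\in P$, corresponding to the free points. A rank-$1$ flat of $M$ is then one of two types.
\begin{itemize}
\item A rank-$1$ flat $F'$ of $N$. Its image $\psi_M(F')$ corresponds to $\psi_N(F')\sqcup P$, where $\psi_N(F')$ is $\{F'\}$ or $\emptyset$ because $r(N)\le 2$.
\item The flat $\mathrm{cl}_M(\emptyset)\cup\{e\}$ with $e\in P$. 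Its image corresponds to $\mathcal{H}(N)\sqcup(P\backslash\{e\})$.
\end{itemize}
Both images are flats of $\mathrm{Adj}(N)\oplus U_{p,p}$. Both have rank $2$ once $r(\mathrm{Adj}(N))=r(N)$ is known.
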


\begin{proof}
Suppose first that $M$ has no coloops. Then $\mathrm{Adj}(M)$ has
rank $3$ by Corollaries \ref{cor:upper-bounds} and \ref{cor:lower-bound}.
We claim that $\psi(F)$ is a flat of $\mathrm{Adj}(M)$ for every
flat $F$ of $M$. By Proposition \ref{prop:Adj-simple}, this holds
for $F=E(M)$ and for mp-flats $F$, and it is clear for $F=\mathrm{cl}_{M}(\emptyset)$.
Now let $F$ be a rank-$1$ flat. Since $r(M)=3$, there are $x\notin F$
and $y\notin\mathrm{cl}_{M}(F\cup\{x\})$, and $H_{1}:=\mathrm{cl}_{M}(F\cup\{x\})$
and $H_{2}:=\mathrm{cl}_{M}(F\cup\{y\})$ are distinct members of
$\psi(F)$. Then $F\subseteq H_{1}\cap H_{2}\subsetneq H_{1}$ forces
$F=H_{1}\cap H_{2}$, which has corank $2$, so $\psi(F)$ is a rank-$2$
flat of $\mathrm{Adj}(M)$ by Corollary \ref{cor:Adj-rk2-flats}.
This proves the claim, so $\psi$ embeds the inverted flat lattice
$\mathcal{L}(M)'$ into $\mathcal{L}(\mathrm{Adj}(M))$. The restriction
of $\psi$ to the atoms of $\mathcal{L}(M)'$, that is, the mp-flats,
sends each $H$ to $\{H\}$, and these singletons are the atoms of
$\mathcal{L}(\mathrm{Adj}(M))$ by Proposition \ref{prop:Adj-simple}.
Thus $\mathrm{Adj}(M)$ is an adjoint of $M$. 

Suppose next that $M$ has coloops, and put $N=M\backslash\mathrm{cl}_{M^{\ast}}(\emptyset)$
and $p=\left|\mathrm{cl}_{M^{\ast}}(\emptyset)\right|$. Then $r(N)=3-p$,
and $\mathrm{Adj}(M)\simeq\mathrm{Adj}(N)\oplus U_{p,p}$ by the beginning
of this section. Moreover, $\mathcal{L}(M)\simeq\mathcal{L}(N)\times\mathcal{L}(U_{p,p})$,
and $\mathcal{L}(U_{p,p})$, the lattice of subsets of a $p$-element
set, is isomorphic to its inverted lattice via complementation. Hence,
taking the product of the map $\psi$ for $N$ with this isomorphism,
we reduce to showing that $\mathrm{Adj}(N)$ has rank $r(N)$ and
is an adjoint of $N$. This is immediate for $r(N)\le1$. If $r(N)=2$,
then the mp-flats of $N$ are its rank-$1$ flats, of which there
are at least two, so $\mathrm{Adj}(N)$ is simple on at least two
elements by Proposition \ref{prop:Adj-simple}, hence has rank at
least $2$, and rank exactly $2$ by Corollary \ref{cor:upper-bounds}.
Thus $\mathcal{L}(N)'$ and $\mathcal{L}(\mathrm{Adj}(N))$ are lattices
of rank $2$, and $\psi$ matches their atoms bijectively, so $\psi$
is an isomorphism between the two lattices.
\end{proof}

\begin{thm}
\label{thm:Adj-V8-ext}Let $M$ be a matroid of rank $4$ that has
$V_{8}$ as a restriction. Then $\mathrm{Adj}(M)$ is a connected
matroid of rank $3$, and $\check{\mathrm{sn}}(\mathcal{H}(M))=3$.
\end{thm}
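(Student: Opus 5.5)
The plan is to prove $\check{\mathrm{sn}}(\mathcal{H}(M))=3$ first, then connectedness, and then to read off the rank from Corollary~\ref{cor:sn-check-3-rank}.

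\emph{Reductions.} Since $V_{8}$ has rank $4=r(M)$, the set $E(V_{8})$ spans $M$. Every element of $E(V_{8})$ lies in a circuit of $V_{8}$, hence of $M$. Every element outside $E(V_{8})$ lies in $\mathrm{cl}_{M}(E(V_{8}))$, so it is either a loop or lies in a circuit meeting $E(V_{8})$. Hence $M$ has no coloops, and the standing hypothesis of this section applies.

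Loops of $M$ lie in every flat. Deleting them therefore induces a bijection $\mathcal{H}(M)\to\mathcal{H}(M\backslash\mathrm{cl}_{M}(\emptyset))$ that commutes with intersections, joins and closures. The dual Gaussian processes, and hence $\check{\mathfrak{X}}$ and $\mathrm{Adj}$, are unchanged, so we may assume $M$ loopless. Then every element outside $E(V_{8})$ lies in a circuit meeting the connected set $E(V_{8})$, so $M$ is connected. Proposition~\ref{prop:Adj-connected} then gives that $\mathrm{Adj}(M)$ is connected.

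\emph{Lower bound.} The dual of Corollary~\ref{cor:sn-3}, applied to $M^{\ast}$, which is connected with $|E|-r(M^{\ast})=k=4$, gives $\check{\mathrm{sn}}(\mathcal{H}(M))\ge3$.

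\emph{Upper bound.} This is the main step. I will transport Example~\ref{exa:Vamos1}(\ref{enu:Vamos-sn-3}) into the flat lattice of $M$ through Proposition~\ref{prop:dual-Gaussian}. Label $V_{8}$ as in Example~\ref{exa:Vamos1} and put $H_{i}=\mathrm{cl}_{M}(C_{i})$ for $i=0,\dots,4$. Each $H_{i}$ is an mp-flat, since $r(C_{i})=3$.

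Take $\mathcal{Y}=\{H_{0},H_{1},H_{4}\}$ and $B=5678$, a base of $M$. By Lemma~\ref{lem:rank-sn-check}, $\check{\mathrm{sp}}^{\infty}(\mathcal{H}_{B})=\mathcal{H}(M)$. So, using Proposition~\ref{prop:closure} under $\beta$, it suffices to show $\mathcal{H}_{B}\subseteq\check{\mathrm{sp}}(\mathcal{Y})$, i.e.\ that each $\mathrm{cl}_{M}(B\backslash\{e\})$ with $e\in B$ is reached by a dual Gaussian process.

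I will run the dual of the executions used there. For the target $\mathrm{cl}_{M}(678)$, the first step takes pivot $H_{0}$. Here the element $c$ is drawn from the target minus the pivot (for example $5$ or $8$ rather than $3$), and I must choose it so that it lies outside exactly the flats we want to update. The basic computation is the following. For two of the planes $H_{i},H_{j}$ sharing a pair, e.g.\ $H_{0}\cap H_{4}$, submodularity gives rank at most $3+3-4=2$. This intersection contains the pair $34$, so it equals the line $\mathrm{cl}_{M}(34)$. Joining with a suitable point gives another plane, such as $\mathrm{cl}_{M}(3456)=H_{3}$ or $\mathrm{cl}_{M}(1278)=H_{2}$. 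At the second step, intersecting with the other pivot plane yields a line such as $\mathrm{cl}_{M}(78)$ or $\mathrm{cl}_{M}(56)$. Joining with one more element then yields $\mathrm{cl}_{M}(B\backslash\{e\})$.

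The obstacle is this bookkeeping. Because the process is dual, the complement-based choices $c_{j}$ of Example~\ref{exa:Vamos1} do not translate verbatim; I must choose $c_{j}\in H\backslash F_{\lambda_{j}}$ and check which $F_{i}$ avoid $c_{j}$. I must also verify that every intersection is exactly the expected line even though $M$ may have extra elements in these flats. This holds because each intersection is determined by rank counts in $M$ together with two spanning elements from $V_{8}$. Here the non-modularity of $V_{8}$ (the planes $\mathrm{cl}(1256)$ and $\mathrm{cl}(3478)$ share no pair) is what forces the particular pivot order.

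\emph{Conclusion.} Once $\check{\mathrm{sn}}(\mathcal{H}(M))=3$ is established, $M$ is connected, so Corollary~\ref{cor:sn-check-3-rank} gives $r(\mathrm{Adj}(M))=3$.
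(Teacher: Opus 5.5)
Your reductions (no coloops, deletion of loops, connectedness of $M$ and hence of $\mathrm{Adj}(M)$), the lower bound from Corollary~\ref{cor:sn-3} applied to $M^{\ast}$, and the closing appeal to Corollary~\ref{cor:sn-check-3-rank} are all sound. The paper instead gets $r(\mathrm{Adj}(M))=\check{\mathrm{sn}}(\mathcal{H}(M))=3$ at once from $3\le r(\mathrm{Adj}(M))\le\check{\mathrm{sn}}(\mathcal{H}(M))\le3$, using Corollaries~\ref{cor:lower-bound} and~\ref{cor:upper-bounds}; this comes to the same thing. Your generating triple is also the paper's. There, $D_{i}$ is the unique circuit of $M^{\ast}$ included in $\check{C}_{i}$, and $\beta D_{i}$ is an mp-flat including $C_{i}$, so $\beta D_{i}=\mathrm{cl}_{M}(C_{i})=H_{i}$. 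Computing the updates in $\mathcal{L}(M)$ by submodularity is a legitimate and somewhat cleaner substitute for the paper's fundamental-circuit computations in $M^{\ast}$.

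The gap is that the upper bound, which is the substance of the theorem, is never carried out. You describe the shape of the computation and defer the choice of the $c_{j}$ as ``bookkeeping,'' but that choice is decisive.

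\emph{Your sample choice is inadmissible.} For the target $\mathrm{cl}_{M}(678)$ you cannot take $c=5$: since $5678$ is a base, $5\notin\mathrm{cl}_{M}(678)$.

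\emph{An admissible but wrong choice fails.} With target $\mathrm{cl}_{M}(678)$, pivot $H_{0}$ and $c_{1}=6$ leave $H_{1}$ unchanged (as $6\in H_{1}$) and turn $H_{4}$ into $\mathrm{cl}_{M}(346)=H_{3}$. Both surviving flats contain $5$, hence so does every later update $\mathrm{cl}_{M}((F_{i}\cap F_{\lambda})\cup\{c\})$, and the process cannot reach $\mathrm{cl}_{M}(678)$.

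\emph{The correct pairing} is the paper's execution read through $\beta$:
\begin{itemize}
\item Targets $\mathrm{cl}_{M}(678)$ and $\mathrm{cl}_{M}(578)$. Take pivot $H_{0}$ and $c_{1}=7$. Since $7\in H_{4}$ and $7\notin H_{1}$, this leaves $H_{4}$ unchanged and replaces $H_{1}$ by $\mathrm{cl}_{M}(\mathrm{cl}_{M}(12)\cup\{7\})=\mathrm{cl}_{M}(127)=H_{2}$. Then pivot $H_{4}$ with $c_{2}=6$ or $c_{2}=5$ replaces $H_{2}$ by $\mathrm{cl}_{M}(\mathrm{cl}_{M}(78)\cup\{c_{2}\})$, which is the target.
\item Targets $\mathrm{cl}_{M}(567)$ and $\mathrm{cl}_{M}(568)$. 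Take pivot $H_{0}$ and $c_{1}=5$, which leaves $H_{1}$ unchanged and turns $H_{4}$ into $\mathrm{cl}_{M}(345)=H_{3}$. Then pivot $H_{1}$ with $c_{2}=7$ or $c_{2}=8$ yields $\mathrm{cl}_{M}(\{5,6,c_{2}\})$.
\end{itemize}
Every membership and non-membership used follows from two facts: $C_{i}\cup\{x\}$ has rank $4$ for $x\in[8]\backslash C_{i}$ (Example~\ref{exa:Vamos1}(\ref{enu:Vamos-DEF})), and $B$ is independent. You should also record that no target equals an initial $H_{i}$. Likewise, the two flats surviving step $1$ are distinct and differ from the target, since each contains $1$ or $3$, so the process neither stops early nor merges. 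With these executions written out, your proof is complete.
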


\begin{proof}
We may assume that $M|[8]=V_{8}$ with $[8]\subseteq[n]=E(M)$, and
we label the circuits of $V_{8}$ as in Example \ref{exa:Vamos1}.
Since circuits and independent sets of $M|[8]$ are circuits and independent
sets of $M$, and since $r(M)=r(V_{8})=4$, the sets $C_{0},\dots,C_{4}$
are circuits of $M$ and $B=5678$ is a base of $M$.

The matroid $M$ has no coloops. Indeed, if $c$ were a coloop of
$M$, then $c\notin[8]$ as $V_{8}$ has no coloops, and $V_{8}$
would be a restriction of $M\backslash\{c\}$, so that $r(M)=r(M\backslash\{c\})+1\ge5$,
contradicting $r(M)=4$.

Loops of $M$ are coloops of $M^{\ast}$ and lie in no circuit of
$M^{\ast}$, so deleting $\mathrm{cl}_{M}(\emptyset)$ from $M$ changes
neither $\mathcal{C}(M^{\ast})$ nor $\mathrm{Adj}(M)$, up to the
identification of $\mathcal{H}(M\backslash\mathrm{cl}_{M}(\emptyset))$
with $\mathcal{H}(M)$ given by $H\mapsto H\cup\mathrm{cl}_{M}(\emptyset)$.
We may therefore assume that $M$ is loopless. 

Since $V_{8}$ is connected, the component of $M$ meeting $[8]$
includes $[8]$, so this component has rank $4$. Every other component
of $M$ then has rank $0$ and hence consists of a single loop. As
$M$ is loopless, $M$ is connected and so is $\mathrm{Adj}(M)$ by
Proposition \ref{prop:Adj-connected}. Since $r(\mathrm{Adj}(M))\ge3$
by Corollary \ref{cor:lower-bound}, it suffices by Corollary \ref{cor:upper-bounds}
to show that $\ensuremath{\check{\mathrm{sn}}(\mathcal{H}(M))=\mathrm{sn}(\mathcal{C}(M^{*}))\leq3}$. 

We show that each $\check{C}_{i}$ includes exactly one circuit of
$M^{*}$. Fix $i\in\left\{ 0,\dots,4\right\} $ and a three-element
subset $S$ of $C_{i}$. Since $\check{C}_{i}$ is an mp-flat of $M^{\ast}$,
we have $r_{M^{\ast}}(\check{C}_{i})=n-5<n-4=\left|\check{C}_{i}\right|$,
so $\check{C}_{i}$ is dependent in $M^{\ast}$. For each $e\in[8]\backslash C_{i}$,
the set $S\cup\{e\}$ is a base of $M$, since no other $C_{j}$ shares
three elements with $C_{i}$, and hence $\beta(S\cup\{e\})$ is a
base of $M^{*}$. As $\check{C}_{i}$ is dependent in $M^{\ast}$
and $\check{C}_{i}\subseteq\check{S}=\beta(S\cup\{e\})\cup\{e\}$,
the set $\check{C}_{i}$ includes exactly one circuit $D_{i}$ of
$M^{\ast}$, which is $C_{M^{\ast}}(e,\beta(S\cup\{e\}))$ for any
such $S$ and $e$. Hence $[8]\cap\check{C}_{i}\subseteq D_{i}$.

Let $\mathcal{A}=\{D_{0},D_{1},D_{4}\}$. We show that $\mathrm{sp}(\mathcal{A})$
includes the fundamental circuit system $\{C_{M^{*}}(e,\check{B}):e\in B\}$
of the base $\check{B}$ of $M^{*}$; by Propositions \ref{prop:span} and \ref{prop:sp-basic},
this implies that $\mathcal{A}$ Gaussian spans $\mathcal{C}(M^{*})$.
Since $5\in D_{0}\cap D_{4}$ and $(D_{0}\cup D_{4})\backslash\{5\}\subseteq(\check{C}_{0}\cup\check{C}_{4})\backslash\{5\}\subseteq\check{3457}\cup\{7\}$,
where $\check{3457}$ is a base of $M^{\ast}$, we have $\left\lfloor (D_{0}\cup D_{4})\backslash\{5\}\right\rfloor =C_{M^{\ast}}(7,\check{3457})=D_{3}$
(take $i=3$, $S=345$, and $e=7$ in the description of $D_{i}$
above). Similarly, $7\in D_{0}\cap D_{1}$ and $(D_{0}\cup D_{1})\backslash\{7\}\subseteq(\check{C}_{0}\cup\check{C}_{1})\backslash\{7\}\subseteq\check{1237}\cup\{3\}$
for the base $\check{1237}$ of $M^{*}$, so $\lfloor(D_{0}\cup D_{1})\backslash\{7\}\rfloor=C_{M^{*}}(3,\check{1237})=D_{2}$
(take $i=2$, $S=127$, and $e=3$). Since $D_{2}\cup D_{4}\subseteq\check{78}$
and $56\subseteq D_{2}\cap D_{4}$, we have $(D_{2}\cup D_{4})\backslash\{6\}\subseteq\check{B}\cup\{5\}$
and $(D_{2}\cup D_{4})\backslash\{5\}\subseteq\check{B}\cup\{6\}$,
so $\left\lfloor (D_{2}\cup D_{4})\backslash\{6\}\right\rfloor =C_{M^{\ast}}(5,\check{B})$
and $\left\lfloor (D_{2}\cup D_{4})\backslash\{5\}\right\rfloor =C_{M^{\ast}}(6,\check{B})$.
Similarly, $D_{1}\cup D_{3}\subseteq\check{56}$ and $78\subseteq D_{1}\cap D_{3}$
imply $\left\lfloor (D_{1}\cup D_{3})\backslash\{8\}\right\rfloor =C_{M^{\ast}}(7,\check{B})$
and $\left\lfloor (D_{1}\cup D_{3})\backslash\{7\}\right\rfloor =C_{M^{\ast}}(8,\check{B})$.

As in Example \ref{exa:Vamos1}(\ref{enu:Vamos-sn-3}), these eliminations
combine into executions of the Gaussian process for $\mathcal{A}$.
The pivot $D_{0}$ with $c_{1}=7$ replaces $D_{1}$ by $D_{2}$ and
leaves $D_{4}$ unchanged, since $D_{4}\subseteq\check{C}_{4}$ and
$7\in C_{4}$; a second step with the pivot $D_{4}$ and $c_{2}=6$
or $c_{2}=5$ gives $C_{M^{*}}(5,\check{B})$ or $C_{M^{*}}(6,\check{B})$,
respectively. Likewise, the pivot $D_{0}$ with $c_{1}=5$ replaces
$D_{4}$ by $D_{3}$ and leaves $D_{1}$ unchanged, since $D_{1}\subseteq\check{C}_{1}$
and $5\in C_{1}$; a second step with the pivot $D_{1}$ and $c_{2}=8$
or $c_{2}=7$ gives $C_{M^{*}}(7,\check{B})$ or $C_{M^{*}}(8,\check{B})$,
respectively. In each execution, $c_{1}$ and $c_{2}$ lie outside
the circuit to be obtained, since $C_{M^{*}}(e,\check{B})\subseteq\check{B}\cup\{e\}$,
$D_{2}\subseteq\check{C}_{2}$, and $D_{3}\subseteq\check{C}_{3}$.
The Gaussian processes for $(\mathcal{A},D_{2})$ and $(\mathcal{A},D_{3})$
succeed after the first steps above, so $D_{2},D_{3}\in\mathrm{sp}(\mathcal{A})$.
Thus $\{D_{0},\dots,D_{4}\}\cup\{C_{M^{\ast}}(e,\check{B}):e\in B\}\subseteq\mathrm{sp}(\mathcal{A})$,
so $\mathcal{C}(M^{\ast})=\mathrm{sp}^{2}(\mathcal{A})=\mathrm{sp}^{\infty}(\mathcal{A})$.
Therefore $\check{\mathrm{sn}}(\mathcal{H}(M))=\mathrm{sn}(\mathcal{C}(M^{\ast}))\le\left|\mathcal{A}\right|=3$.
\end{proof}

\begin{rem}
\label{rem:Dilworth-Adjoint}Let $M$ be as in Theorem \ref{thm:Adj-V8-ext}.
Since an adjoint of $M$ has the same rank as $M$, the matroid $\mathrm{Adj}(M)$
is not an adjoint of $M$, and the failure already appears at the
rank-$1$ flats of $M$. Let $F$ be a rank-$1$ flat of $M$. Take
rank-$2$ flats $F_{1}$ and $F_{2}$ of $M$ with $F_{1}\cap F_{2}=F$.
Both have corank $2$ in $M$, so $\psi(F_{1})$ and $\psi(F_{2})$
are rank-$2$ flats of $\mathrm{Adj}(M)$ by Corollary \ref{cor:Adj-rk2-flats},
and they are distinct since $\psi$ is injective. Then $\psi(F_{1})\cup\psi(F_{2})$
has rank at least $3$ in $\mathrm{Adj}(M)$, hence rank $3$ by Theorem
\ref{thm:Adj-V8-ext}, and so spans $\mathrm{Adj}(M)$. Since $\psi(F_{1})\cup\psi(F_{2})\subseteq\psi(F)$,
we have $\mathrm{cl}_{\mathrm{Adj}(M)}(\psi(F))=\mathcal{H}(M)$.
Yet $\psi(F)\neq\mathcal{H}(M)$ since $r_{M}(F)=1$, so $\psi(F)$
is not a flat of $\mathrm{Adj}(M)$. This is where $\mathrm{Adj}(M)$
departs from the complete adjoint, in which the image of every flat
is a flat \cite[Exercise 7.59]{Dilworth}. The complete adjoint of
$M$, into which the inverted flat lattice $\mathcal{L}(M)'$ embeds,
has rank at least $4$ and so does not coincide with $\mathrm{Adj}(M)$.
\end{rem}

\begin{problem}
Recall from Corollary \ref{cor:upper-bounds} that $r(\mathrm{Adj}(M))\le\check{\mathrm{sn}}(\mathcal{H}(M))\le k$.
When $k=3$, both inequalities are equalities by Proposition \ref{prop:rk3-Adjoint}.
When $M$ is as in Theorem \ref{thm:Adj-V8-ext}, we have $r(\mathrm{Adj}(M))=\check{\mathrm{sn}}(\mathcal{H}(M))=3<k$,
so only the second is strict. In addition, the first is an equality
whenever $M$ is connected and $\check{\mathrm{sn}}(\mathcal{H}(M))\leq3$,
by Corollary \ref{cor:sn-check-3-rank}. Characterize the matroids
$M$ for which $r(\mathrm{Adj}(M))=\check{\mathrm{sn}}(\mathcal{H}(M))$.
\end{problem}

\subsection*{Acknowledgements}

The author is grateful to Thomas Zaslavsky for helpful discussions
at an early stage of this research, and to Renzo Cavalieri for stimulating
conversations during the author's visit to Colorado State University.

\end{document}